\documentclass[11pt]{article}
\usepackage{enumerate}
\usepackage{lineno,hyperref,extarrows}
\usepackage{amsfonts}
\usepackage{amssymb}
\usepackage{mathrsfs}
\usepackage{srcltx}
\usepackage{verbatim}
\textwidth 155mm \textheight 247mm \oddsidemargin 15pt
\evensidemargin 0pt \topmargin -2cm \headsep 0.3cm

\usepackage{amsmath}
\usepackage{amsthm}
\usepackage{amstext}
\usepackage{amsopn}
\usepackage{graphicx}
\modulolinenumbers[4]
\allowdisplaybreaks[4]











\newtheorem{theorem}{Theorem}[section]
\newtheorem{lemma}[theorem]{Lemma}

\newtheorem{corollary}[theorem]{Corollary}
\newtheorem{remark}[theorem]{Remark}

\theoremstyle{definition}
\newtheorem{definition}[theorem]{Definition}

\numberwithin{equation}{section}

\newcommand{\ba}{\begin{array}}
\newcommand{\ea}{\end{array}}
\newcommand{\f}{\frac}

\newcommand{\R}{\mathbb{R}}

\newcommand{\N}{\mathbb{N}}

\begin{document}
\date{}
 \title{\bf New approaches for Schr\"odinger equations with prescribed mass:
   The Sobolev  subcritical case
 and The Sobolev  critical case with mixed dispersion}

\author{Sitong Chen,\ \  Xianhua Tang\footnote{The final revised version of this paper has been published in JDE.\ \ 
E-mail address: {\tt  mathsitongchen@mail.csu.edu.cn} (S.T. Chen),
{\tt tangxh@mail.csu.edu.cn} (X.H. Tang).}\\
{\small \it School of Mathematics and Statistics, HNP-LAMA, Central South University,}\\
{\small \it  Changsha, Hunan 410083, P.R.China}}
\maketitle
\begin{abstract}
In this paper, we prove
the existence of normalized solutions for the following Schr\"odinger equation
\begin{equation*}
 \left\{
   \begin{array}{ll}
     -\Delta u-\lambda u=f(u), & x\in \R^N, \\
     \int_{\R^N}u^2\mathrm{d}x=c \\
   \end{array}
 \right.
 \end{equation*}
 with $N\ge3$, $c>0$, $\lambda\in \R$ and $f\in \mathcal{C}(\R,\R)$ in the Sobolev subcritical case
 with weaker $L^2$-supercritical conditions
 and in the Sobolev critical case when $f(u)=\mu |u|^{q-2}u+|u|^{2^*-2}u$ with $\mu>0$ and
 $2<q<2^*=\f{2N}{N-2}$  allowing to be $L^2$-subcritical, critical or supercritical.
 Our approach is based on several new critical point theories on a manifold, which not only help to
 weaken the previous $L^2$-supercritical conditions in the Sobolev subcritical case, but present
 an alternative scheme to construct bounded (PS) sequences on a manifold  when
 $f(u)=\mu |u|^{q-2}u+|u|^{2^*-2}u$ technically simpler than the Ghoussoub minimax principle \cite{Gh}
 involving topological arguments, as well as working for all $2<q<2^*$.
 In particular, we propose new strategies to control the energy level
 in the  Sobolev critical case which allow to treat, in a unified way, the dimensions $N=3$ and $N\ge 4$,
 and fulfill what were expected by Soave \cite{So-JFA} and by Jeanjean-Le \cite{JL-MA}.
  We believe that our approaches and strategies may be adapted and modified to
 attack more variational problems in the constraint contexts.

 \vskip2mm
 \noindent
 {\bf Keywords: }\ \ Nonlinear Schr\"odinger equation;  Normalized solution;   Sobolev critical growth;   Mixed nonliearities.

 \vskip2mm
 \noindent
 {\bf 2010 Mathematics Subject Classification.}\ \ 35J20, 35J62, 35Q55
\end{abstract}











{\section{Introduction}}
 \setcounter{equation}{0}
 This paper is concerned with the nonlinear Schr\"odinger equation with an $L^2$-constraint
\begin{equation}\label{Pa}
 \left\{
   \begin{array}{ll}
     -\Delta u-\lambda u=f(u), & x\in \R^N, \\
     \int_{\R^N}u^2\mathrm{d}x=c, \\
   \end{array}
 \right.
 \end{equation}
 where $N\ge3$, $f\in \mathcal{C}(\R,\R)$, $c>0$ is a given mass, $\lambda\in \R$ appears as  a Lagrange multiplier which depends on the solution
  $u\in H^1(\R^N)$ and is not a priori given.

  The main feature of \eqref{Pa} is that the desired solutions have a priori prescribed $L^2$-norm,
 which are often referred to as normalized solutions in the literature,
that is, for given $c>0$,  a couple $(u,\lambda)\in H^1(\R^N)\times \R$ solves \eqref{Pa}.
From the physical viewpoint,  these solutions often offer a good insight of the dynamical properties of the stationary
solutions, such as the orbital stability or instability, see, for example, \cite{BJL,So-JDE,So-JFA}.
 This type of problem has attracted much attention in the community of nonlinear PDEs in the last decades.
 Under mild conditions on $f$, one can introduce the $\mathcal{C}^1$-functional
$\Phi: H^1(\R^N) \rightarrow \R$ defined by
 \begin{equation}\label{Ph}
  \Phi(u)=\f{1}{2}\|\nabla u\|_2^2-\int_{\R^N}F(u)\mathrm{d}x,
 \end{equation}
 where $F(t)=\int_0^tf(s)\mathrm{d}s$  for $t\in \R$.
It is standard that for prescribed $c>0$, a solution of \eqref{Pa} can be obtained as a critical point of the functional $\Phi$ constrained
to the sphere
\begin{equation}\label{Sc}
  \mathcal{S}_c:=\left\{u\in H^1(\R^N):\|u\|_2^2=c\right\}.
 \end{equation}
 As we know, the study of  \eqref{Pa} depends on the behavior of the nonlinearity $f$ at infinity,
 which gives rise to a new $L^2$-critical exponent $\bar{q}:=2+\f{4}{N}$,
 coming from the Gagliardo-Nirenberg inequality (see \cite[Theorem 1.3.7]{Caz}).
 One speaks of a $L^2$-subcritical case if $\Phi$ is bounded from below on $\mathcal{S}_c$ for any $c>0$,
 and of a $L^2$-supercritical case if $\Phi$ is unbounded from below on $\mathcal{S}_c$ for any $c>0$.
 One also refers to a $L^2$-critical case when the boundedness from below
 does depend on the value $c>0$.  We say that $u$ is a ground state solution to \eqref{Pa} if it is a solution
 having minimal energy among all the solutions which belong to $\mathcal{S}_c$.
 Compared with the $L^2$-subcritical case, more efforts are
 always needed in the study of the $L^2$-critical and $L^2$-supercritical cases.

 In this paper, we focus on not only normalized solutions of \eqref{Pa} in the $L^2$-critical and $L^2$-supercritical cases, but a more complicated situation when $f(u)=\mu |u|^{q-2}u+|u|^{2^*-2}u$ with $\mu>0$ and $2<q<2^*:=\f{2N}{N-2}$, namely the following equation with Sobolev  critical exponent and mixed dispersion:
 \begin{equation}\label{Pa1}
 \left\{
   \begin{array}{ll}
     -\Delta u+\lambda u=\mu |u|^{q-2}u+|u|^{2^*-2}u, & x\in \R^N, \\
     \int_{\R^N}u^2\mathrm{d}x=c,\\
   \end{array}
 \right.
 \end{equation}
 where, particularly, $q$ is allowed to be $L^2$-subcritical $2<q<2+\f{4}{N}$, $L^2$-critical $q=2+\f{4}{N}$, or $L^2$-supercritical $2+\f{4}{N}< q<2^*$.
 Let us describe the relevant works below that motivate our researches.

\medskip
{\subsection{$L^2$-supercritical problem \eqref{Pa}}}

  The first contribution to the $L^2$-supercritical case was made by Jeanjean \cite{Je-NA},
  where a radial solution of mountain pass type to \eqref{Pa} was found under the following conditions:
 \begin{itemize}
 \item[(H0)] $f$ is  odd;

 \item[(H1)] $f\in \mathcal{C}(\R, \R)$ and there exist $\alpha,\beta\in \R$ satisfying $2+\f{4}{N}<\alpha\le \beta<2^*=\f{2N}{N-2}$
  such that
  \begin{equation}\label{AR1}
    0<\alpha F(t)\le f(t)t\le \beta F(t), \ \ \ \ \forall\ t\in \R\setminus\{0\};
  \end{equation}
 \end{itemize}
 moreover a ground state solution was obtained if $f$ also satisfies
 \begin{itemize}
 \item[(H2)] the function $\tilde{F}(t):=f(t)t-2F(t)$ is of class $\mathcal{C}^1$ and
 $$
   \tilde{F}'(t)t>\left(2+\f{4}{N}\right)F(t), \ \ \ \ \forall \ t\in \R\setminus\{0\}.
 $$
 \end{itemize}
 Although the mountain pass geometry of $\Phi$ implies the existence of a Palais-Smale sequence $\{u_n\}$ (a (PS) sequence for short) at the mountain pass level, such a sequence may not be bounded. To overcome this  difficulty, Jeanjean constructed a good (PS) sequence having additional property related with  the Pohozaev type  identity, namely $\mathcal{P}(u_n)\to 0$,  where $\mathcal{P}: H^1(\R^2) \to \R$ is defined by
 \begin{equation}\label{Ju}
   \mathcal{P}(u):= \|\nabla u\|_2^2-\f{N}{2}\int_{\R^N}[f(u)u-2F(u)]\mathrm{d}x.
 \end{equation}
 The inspiring part of the proofs is the application of  the Ekeland principle to  the fibering map $ \tilde{\Phi}: H^1(\R^2)\times\R \to \R$ defined by
 \begin{align}\label{Tb0}
   \tilde{\Phi}(v,t) :=\f{e^{2t}}{2}\|\nabla v\|_2^2
              -\f{1}{e^{Nt}}\int_{\R^N}F\left(e^{Nt/2}v\right)\mathrm{d}x,
 \end{align}
 whose  mountain pass level on $\mathcal{S}_c\times \R$ equals to the one of $\Phi$ on $\mathcal{S}_c$. In particular, the first part of \eqref{AR1} in (H1) was used in a technical but essential way
 in showing not only the mountain pass geometry
 but the boundedness of the above (PS) sequence $\{u_n\}$ due to $\alpha\Phi(u_n)+o(1)=\alpha\Phi(u_n)- \mathcal{P}(u_n)$.
 It is reminiscent of the the classical Ambrosetti-Rabinowitz condition ((AR)-condition for short) introduced in
 \cite{AR-JFA} for the unconstrained superlinear problem:
 \begin{equation}\label{1}
  -\Delta u+ u=f(u)\ \  \hbox{in}\ \ \R^N.
 \end{equation}
 However, in contrast to unconstrained problems, the first part of \eqref{AR1}, as an analogue of (AR)-condition for \eqref{1}, is required in almost of the studies for the search of normalized solutions.
 It was not until 2022 that the first part of \eqref{AR1} was relaxed by Jeanjean-Lu \cite{JL-CVPDE} to the condition:
  \begin{itemize}
  \item [(H3)] $\lim_{t\to 0}f(t)/t^{1+\f{4}{N}}=0$ and $\lim_{t\to \infty}F(t)/t^{2+\f{4}{N}}=+\infty$,
 \end{itemize}
 in addition, provided the following monotonicity condition is satisfied:
  \begin{itemize}
  \item [(H4)]$[f(t)t-2F(t)]/|t|^{1+\f{4}{N}}t$ is nondecreasing on $(-\infty,0)$ and $(0,+\infty)$.
 \end{itemize}
 Particularly, ground state solutions were found by developing new robust arguments on the Pohozaev manifold defined by
 \begin{equation}\label{MP0}
   \mathcal{M}(c):=\left\{u\in \mathcal{S}_c: \mathcal{P}(u)=0\right\},
 \end{equation}
 combining the techniques  due to Szulkin and Weth \cite{SW-JFA,SW-Book}
 with the mini-max approach in $\mathcal{M}(c)$ introduced by Ghoussoub \cite{Gh}.
 To our knowledge, there seems to be little progress in this direction  with the exception of \cite{JL-CVPDE}.
 Note that (H4) plays an analogous role as the Nehari-type condition for \eqref{1} for the search of ground state solutions. On the contrary, for the unconstrained problem \eqref{1}, (AR)-condition can be weakened to more general superlinear conditions without the Nehari-type condition.
 Thus, a natural question arises:
 {\bf{
\begin{itemize}
\item[(Q1)] Can we weaken {\rm(H1)} to more general $L^2$-supercritical conditions
  without imposing the monotonicity property on $f$ like {\rm(H4)}?
\end{itemize}
}}
 In the first part of this paper, we shall not only positively answer to the above question,
 but also provide a new approach to construct a bounded (PS) sequence of $\Phi\big|_{\mathcal{S}_c}$ at the mountain pass level.
 Before stating the result in this direction, let us introduce the following conditions:
 \begin{itemize}
 \item[(F0)] $f\in \mathcal{C}(\R,\R)$ and
  $$
    \lim_{t\to 0}\f{f(t)}{t}=0, \ \ \ \ \lim_{|t|\to +\infty}\f{|f(t)|}{|t|^{2^*-1}}<+\infty;
  $$
 \end{itemize}
 \begin{itemize}
 \item[(F1)] $f\in \mathcal{C}(\R,\R)$ and
  $$
    \lim_{t\to 0}\f{f(t)}{|t|^{1+\f{4}{N}}}=0, \ \ \ \ \lim_{|t|\to +\infty}\f{|f(t)|}{|t|^{2^*-1}}=0;
  $$
 \end{itemize}
 \begin{itemize}
 \item[(F2)] $0<\left(2+\f{4}{N}\right) F(t)\le f(t)t < \f{2N}{N-2} F(t)$ for all $t\in \R\setminus\{0\}$;

 \item[(F3)] there exists $\kappa>\f{N}{2}$ such that
  $$
    \limsup_{|t|\to+\infty}\f{[f(t)t-2F(t)]^{\kappa}}{t^{2\kappa}[Nf(t)t-(2N+4)F(t)]}<+\infty;
  $$

 \item[(F3$'$)] there exist $\kappa>\f{N}{2}$ and $\mathcal{C}_0>0$ such that
  $$
    \left(\f{f(t)t-2F(t)}{t^{2}}\right)^{\kappa}\le \mathcal{C}_0[Nf(t)t-(2N+4)F(t)],
      \ \ \ \ \forall \ t\in \R\setminus \{0\}.
  $$
 \end{itemize}
  Our idea of weakening (H1) is somehow inspired by Ding \cite{Di}
  where the unconstrained superlinear problem \eqref{1} was considered.
  Here comes the first result of this paper.

 \begin{theorem}\label{thm1.1} Let $c>0$.
 \begin{enumerate}[{\rm (i)}]
  \item If $f$ satisfies {\rm (F1)-(F3)}, then \eqref{Pa} admits a couple solution $(u_c,\lambda_c)
  \in H_{\mathrm{rad}}^1 (\R^N) \times (-\infty,0)$.

  \item If $f$ satisfies {\rm (F1), (F2)} and {\rm (F3$'$)}, then \eqref{Pa} admits a couple solution
  $(\bar{u}_c,\lambda_c) \in H_{\mathrm{rad}}^1 (\R^N) \times (-\infty,0)$ such that $\Phi(u_c)=\inf_{\mathcal{K}_c}\Phi$, where
 $$
   \mathcal{K}_c:=\left\{u\in \mathcal{S}_c\cap H_{\mathrm{rad}}^1 (\R^N): \ \Phi|_{\mathcal{S}_c}'(u)=0\right\}.
 $$
 \end{enumerate}
 \end{theorem}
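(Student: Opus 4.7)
The plan is to revisit Jeanjean's fiber-map/mountain-pass scheme but to replace the Ambrosetti--Rabinowitz-type inequality $\alpha F(t)\le f(t)t$ (used in \cite{Je-NA} to bound the (PS) sequence) by the interpolation-type condition (F3) or (F3'), which will play precisely the role of a Pohozaev-level coercivity estimate. Working on $\mathcal{S}_c\cap H^1_{\mathrm{rad}}(\R^N)$, I consider the fibered functional $\tilde{\Phi}(u,t)=\Phi(H(u,t))$ with $H(u,t)(x)=e^{Nt/2}u(e^tx)$. Using (F1) and (F2) together with Gagliardo--Nirenberg and Sobolev, one gets $F(t)=o(|t|^{2+4/N})$ at the origin and $F(t)\le C|t|^{2^*}$ at infinity, which yields the standard mountain-pass geometry for $\tilde{\Phi}$ on $(\mathcal{S}_c\cap H^1_{\mathrm{rad}})\times\R$ with a positive level $m_c$; the supercritical behavior encoded in (F2) allows the path to reach negative values after a long dilation.

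Applying Ekeland's principle to $\tilde{\Phi}$ on this product manifold produces $\{(v_n,t_n)\}$ with $\tilde{\Phi}(v_n,t_n)\to m_c$ and $\tilde{\Phi}'\to 0$; setting $u_n:=H(v_n,t_n)$ gives a (PS) sequence for $\Phi|_{\mathcal{S}_c\cap H^1_{\mathrm{rad}}}$ at level $m_c$ with the bonus property $\mathcal{P}(u_n)\to 0$. The decisive step is then the boundedness of $\{u_n\}$ in $H^1$. The key algebraic identity is
\begin{equation*}
\int_{\R^N}\bigl[Nf(u_n)u_n-(2N+4)F(u_n)\bigr]\,\mathrm{d}x=4\Phi(u_n)-2\mathcal{P}(u_n)=4m_c+o(1).
\end{equation*}
Using (F3') pointwise gives
$
 \bigl(f(u_n)u_n-2F(u_n)\bigr)/u_n^{2}\le \mathcal{C}_0^{1/\kappa}\bigl[Nf(u_n)u_n-(2N+4)F(u_n)\bigr]^{1/\kappa},
$
and Hölder's inequality with exponents $\kappa,\kappa/(\kappa-1)$ combined with the Gagliardo--Nirenberg estimate
$\|u_n\|_{2\kappa/(\kappa-1)}^{2\kappa/(\kappa-1)}\le C\|\nabla u_n\|_2^{N/(\kappa-1)}$ (valid since $\kappa>N/2$ forces $2\kappa/(\kappa-1)<2^{*}$) yields
$\int[f(u_n)u_n-2F(u_n)]\le C\|\nabla u_n\|_2^{N/\kappa}$ with exponent $N/\kappa<2$. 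Plugging this into $\mathcal{P}(u_n)=o(1)$ closes the argument and delivers $\|\nabla u_n\|_2=O(1)$. For case (i) with the merely asymptotic (F3), I would split the domain into $\{|u_n|\le R\}$ and $\{|u_n|>R\}$, using (F1) and the Gagliardo--Nirenberg estimate on the first piece and (F3) on the second.

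Once boundedness is secured, radial symmetry and the compact embedding $H^1_{\mathrm{rad}}\hookrightarrow L^p(\R^N)$ for $2<p<2^{*}$, together with the subcritical growth at infinity inherited from (F1), yield $u_n\rightharpoonup u_c$ weakly and $u_n\to u_c$ strongly in $L^p$ for subcritical $p$; the Lagrange multiplier $\lambda_n=\langle\Phi'(u_n),u_n\rangle/\|u_n\|_2^2$ is bounded and passes to a limit $\lambda_c$, with $-\Delta u_c-\lambda_c u_c=f(u_c)$ and $u_c\ne0$ because $m_c>0$. Strong $H^1$-convergence follows by testing the equation and invoking Brezis--Lieb; mass preservation is obtained from this strong convergence. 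A Nehari--Pohozaev combination, exploiting (F2), gives $\lambda_c c=-\frac{2}{N}\int[(2+4/N)F(u_c)-f(u_c)u_c]$, which under (F2) is strictly negative. For part (ii), the global form (F3') upgrades the asymptotic control to a uniform bound on $\mathcal{K}_c$, so a minimizing sequence in $\mathcal{K}_c$ replaces the mountain-pass sequence in the above argument: the property $\mathcal{P}(u_n)=0$ is now exact rather than asymptotic, and the same Hölder--GN estimate produces a bounded minimizing sequence whose limit $\bar{u}_c\in\mathcal{K}_c$ realizes $\inf_{\mathcal{K}_c}\Phi$.

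The principal obstacle is Step~3, the $H^1$-boundedness of the (PS) sequence. All previous treatments (\cite{Je-NA,JL-CVPDE}) rely either on the multiplicative lower bound $\alpha F(t)\le f(t)t$ or on the monotonicity (H4) of $[f(t)t-2F(t)]/|t|^{1+4/N}t$ to extract boundedness from $\alpha\Phi(u_n)-\mathcal{P}(u_n)$ or from a suitable Nehari-type identity. Here neither tool is available, and the only substitute is the interpolation estimate (F3)/(F3'). Making it quantitative requires the sharp choice $\kappa>N/2$ so that the Gagliardo--Nirenberg exponent $N/\kappa$ stays strictly below $2$; any weaker assumption would fail to absorb the $\|\nabla u_n\|_2^{N/\kappa}$ term against $\|\nabla u_n\|_2^{2}$, and the argument would collapse.
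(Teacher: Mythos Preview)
Your overall scheme matches the paper's: the fibered functional on $(\mathcal{S}_c\cap H^1_{\mathrm{rad}})\times\R$, a (PS) sequence with the extra property $\mathcal{P}(u_n)\to 0$, the identity $\int_{\R^N}[Nf(u_n)u_n-(2N+4)F(u_n)]\,\mathrm{d}x=4\Phi(u_n)-2\mathcal{P}(u_n)$, and the H\"older/Gagliardo--Nirenberg combination with (F3)/(F3$'$) to force boundedness. For part (ii) the paper does exactly what you propose, passing to a minimizing sequence in $\mathcal{K}_c$ with $\mathcal{P}(u_n)=0$ and using (F3$'$) globally. One cosmetic difference: the paper argues boundedness under (F3) by contradiction (normalizing $v_n=u_n/\|\nabla u_n\|_2$ so that $\|v_n\|_2\to 0$ and hence $\|v_n\|_{2\kappa'}\to 0$), whereas you aim for a direct bound; both routes work.

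Two corrections. First, your formula $\lambda_c c=-\tfrac{2}{N}\int[(2+\tfrac{4}{N})F(u_c)-f(u_c)u_c]$ is wrong and, with the sign as written, would give $\lambda_c\ge 0$ under (F2). The correct combination of $\langle\Phi'(u_c),u_c\rangle=\lambda_c c$ and $\mathcal{P}(u_c)=0$ yields $\lambda_c c=\tfrac{N-2}{2}\int f(u_c)u_c-N\int F(u_c)$, and it is the \emph{strict upper} inequality $f(t)t<\tfrac{2N}{N-2}F(t)$ in (F2), not the lower one, that makes this negative. Second, in your (F3) sketch, invoking ``Gagliardo--Nirenberg on the first piece $\{|u_n|\le R\}$'' is dangerous: if you bound $f(t)t-2F(t)\lesssim|t|^{2+4/N}$ there and apply Gagliardo--Nirenberg with the fixed mass, you get a term $C\|\nabla u_n\|_2^{2}$ that you cannot absorb. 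The right estimate is simply $\tfrac{f(t)t-2F(t)}{t^{2}}\le C_R$ on $|t|\le R$ (from (F1)), giving $\int_{\{|u_n|\le R\}}[f(u_n)u_n-2F(u_n)]\le C_R\|u_n\|_2^{2}=C_Rc$, a constant. With this fix your direct bound $\|\nabla u_n\|_2^{2}\le C+C'\|\nabla u_n\|_2^{N/\kappa}+o(1)$ closes since $N/\kappa<2$; this is equivalent to the paper's contradiction argument.
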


 \begin{remark}\label{rem1.1}
  \begin{itemize}
  \item [{\rm(i)}] As pointed out by Jeanjean-Lu {\rm\cite[Remark 1.1 (i)]{JL-CVPDE}},
 there are no existence results on \eqref{Pa} so far without imposing related conditions as {\rm(H2)} or {\rm(H4)}.
 There are many functions $f(t)$ which satisfy {\rm(F1)-(F3)} but do neither {\rm(H2)} nor {\rm(H4)}. In this sense, Theorem {\rm\ref{thm1.1}} seems to be new, and improves and extends the related results on normalized solutions in the literature.

 \item [{\rm(ii)}] In the proof of Theorem {\rm\ref{thm1.1}}, inspired by {\rm\cite{BL2,Je-NA,WM}}, we develop new critical point theories on a manifold
 {\rm(}see Lemmas {\rm\ref{lem 2.3}} and {\rm\ref{lem 2.13}},
 Theorems {\rm\ref{thm 2.4}} and {\rm\ref{thm 2.14}},
 Corollaries {\rm\ref{cor 2.5}} and {\rm\ref{cor 2.15}{\rm)}},
 which help to generate a bounded {\rm(PS)}-sequence of $\Phi\big|_{\mathcal{S}_c}$ at the mountain pass level.
 They may be considered as counterparts of the deformation lemma and general minimax principle due to Willem {\rm\cite{WM}}  in the constraint context.
  We believe that these theories may be adapted and modified to attack more variational problems in the constraint contexts.
 \end{itemize}
 \end{remark}

 {\subsection{Mixed problem \eqref{Pa1} with Sobolev  critical exponent}}
 The second part is devoted to the study of normalized solutions for a more complex problem \eqref{Pa1} with Sobolev  critical exponent and mixed dispersion,
 which is the heart of this paper.
 The study of  such a problem is a very active topic nowadays,
 and can be as a counterpart of the Brezis-Nirenberg problem in the context of normalized solutions.
 It is well-known that solutions of \eqref{Pa1} are critical points of the functional
 \begin{equation}\label{Phu}
  \Phi_{\mu}(u)=\f{1}{2}\|\nabla u\|_2^{2}-\f{1}{2^*}\|u\|_{2^*}^{2^*}-\f{\mu}{q}\|u\|_q^q, \ \ \ \ \forall\ u\in H^1(\R^2)
 \end{equation}
 on the constraint $\mathcal{S}_c$, and $\Phi_{\mu}$ is unbounded from below on $\mathcal{S}_c$ due to $2^*=\f{2N}{N-2}>2+\f{N}{4}$.
 Similarly as \eqref{Ju} and \eqref{MP0}, we define the functional
 \begin{equation}\label{Pu}
   \mathcal{P}_{\mu}(u):= \|\nabla u\|_2^{2}-\|u\|_{2^*}^{2^*}-\mu\gamma_q\|u\|_q^q, \ \ \ \ \forall\ u\in H^1(\R^N)
 \end{equation}
 and the Pohozaev manifold
 \begin{equation}\label{Muc}
   \mathcal{M}_{\mu}(c):=\left\{u\in \mathcal{S}_c: \mathcal{P}_{\mu}(u)=0\right\}.
 \end{equation}
Compared with  the previous $L^2$-supercritical problem \eqref{Pa}, the study of \eqref{Pa1} is more delicate since we have to not only carefully analyse
 how a lower order term $|u|^{q-2}u$ affects the structure
 of  the constrained functional $\Phi_{\mu}\big|_{\mathcal{S}_c}$,
 but also solve the lack of compactness caused by Sobolev critical growth.
 This problem was firstly studied by Soave \cite{So-JFA}. Based on the fibration method of Pohozaev
 relying on the decomposition of Pohozaev manifold
  \begin{equation}\label{MD}
  \mathcal{M}_{\mu}(c)=\left\{u\in \mathcal{S}_c: \tilde{\phi}_u'(0)=0\right\}= \mathcal{M}_{\mu}^-(c)\cup \mathcal{M}_{\mu}^0(c)\cup \mathcal{M}_{\mu}^+(c),
 \end{equation}
  where $\tilde{\phi}_u(t)=\Phi\left(e^{Nt/2}u(e^tx)\right)$ for $ u\in H^1(\R^N)$ and $t\in \R$,
  \begin{equation}\label{M+-}
   \mathcal{M}_{\mu}^\pm(c):=\left\{u\in \mathcal{M}_{\mu}(c): \tilde{\phi}_u''(0)\gtrless 0\right\}, \ \
  \mathcal{M}_{\mu}^0(c):=\left\{u\in \mathcal{M}_{\mu}(c): \tilde{\phi}_u''(0)=0\right\}.
 \end{equation}

 Soave proved the following results:

 \smallskip
 \noindent
 {\bf Theorem [S] (\cite[Theorems 1.1]{So-JFA})}\ \ There exists a constant  $\alpha(N,q)>0$ depending on
 $N,q$, $\gamma_q:=\f{N(q-2)}{2q}$ and the best constant for the Gagliardo-Nirenberg inequality $\mathcal{C}_{N,q}$ (see \eqref{GN}) such that, if $\mu c^{\f{(1-\gamma_q)q}{2}}<\alpha(N,q)$, \eqref{Pa1} has a ground state solution $\tilde{u}$. Furthermore,
  \begin{itemize}
  \item [(i)] if $2<q<2+\f{4}{N}$, $\tilde{u}$ corresponds to a local minimizer satisfying $\inf_{ \mathcal{M}_{\mu}^+(c)}\Phi_{\mu}=\Phi_{\mu}(\tilde{u})<0$;

 \item [(ii)] if $2+\f{4}{N}\le q<2^*$, $\tilde{u}$ is stable and characterized as a solution of mountain-pass type with $0<\inf_{ \mathcal{M}_{\mu}^-(c)}\Phi_{\mu}=\Phi_{\mu}(\tilde{u})<\f{1}{N}\mathcal{S}^{\f{N}{2}}$, where
     and in the sequel, $\mathcal{S}$ denotes the best constant for the Sobolev inequality (see \eqref{Sob}).
 \end{itemize}
 Subsequently, by introducing a set $  V(c):=\{u\in \mathcal{S}_c: \|\nabla u\|_2^2<\rho_0\}$ having the property that
 \begin{equation}\label{mmu1}
   m_{\mu}(c):=\inf_{u\in V(c)}\Phi_{\mu}(u)<0<\inf_{u\in \partial V(c)}\Phi_{\mu}(u)\ \ \hbox{with}\ \
   \partial V(c):=\{u\in \mathcal{S}_c: \|\nabla u\|_2^2=\rho_0\},
 \end{equation}
 where $\rho_0$ and $c_0$ are given as follows:
 \begin{align}\label{rhod}
   \rho_0:=\left[\f{2^*\mu \alpha_0\mathcal{C}_{N,q}^q\mathcal{S}^{2^*/2}}{q\alpha_2}\right]
          ^{\f{2}{\alpha_2+\alpha_0}}c^{\f{\alpha_1}{\alpha_0+\alpha_2}}
 \end{align}
 and
 \begin{align}\label{c0d}
   c_0:=  \left[\f{2^*\alpha_0{\mathcal{S}}^{2^*/2}}{\alpha_0+\alpha_2}
           \left(\f{q\alpha_2}{2^*\mu\alpha_0\mathcal{C}_{N,q}^q\mathcal{S}^{2^*/2}}\right)
           ^{\f{\alpha_2}{\alpha_0+\alpha_2}}\right]^{\f{N}{2}},
 \end{align}
 with
 \begin{equation}\label{al}
   \alpha_0:=2-\f{N(q-2)}{2}, \ \ \alpha_1:=\f{2N-q(N-2)}{2}, \ \ \alpha_2:=\f{4}{N-2}.
 \end{equation}
 Jeanjean-Jendrej-Le-Visciglia \cite{JJLV} proved that for any $c\in (0,c_0)$, the set of ground state solutions is orbitally stable for
 the case $2<q<2+\f{4}{N}$.
 Note that such a structure of local minima for the case $2<q<2+\f{4}{N}$,
 suggests the possibility to search for a solution lying at a mountain pass level,
 which was proposed by Soave \cite[Remark 1.1]{So-JFA} as a conjecture.
 Recently, this conjecture has been confirmed by Jeanjean-Le \cite{JL-MA} and Wei-Wu \cite{WW-JFA}.
 Let us now recall the results obtained there.
 Based on the same decomposition \eqref{MD} as that in \cite{So-JFA}, with some new energy estimates, Wei-Wu \cite{WW-JFA}
 complemented the results of the above Theorem [S] from three respects:
   \begin{itemize}
  \item in the case $2<q<2+\f{4}{N}$, a second solution $u_{\mu,c}^{-}\in \mathcal{M}_{\mu}^{-}(c)$ with
 $\Phi_{\mu}(u_{\mu,c}^{-})=\inf_{\mathcal{M}_{\mu}^-(c)}\Phi_{\mu}$
 was found if $\mu c^{\f{(1-\gamma_q)q}{2}}<\alpha(N,q)$ which satisfies $0<\Phi_{\mu}(u_{\mu,c}^{-})<\inf_{ \mathcal{M}_{\mu}^-(c)}\Phi_{\mu}+\f{1}{N}\mathcal{S}^{\f{N}{2}}$;
 \item  in the case $q=2+\f{4}{N}$, it was shown that \eqref{Pa1}  has no ground state solutions for
 $\mu c^{\f{(1-\gamma_q)q}{2}}\ge\alpha(N,q)$;
 \item  in the case $2+\f{4}{N}< q<2^*$, the existence range that $\mu c^{\f{(1-\gamma_q)q}{2}}<\alpha(N,q)$ was extended to all $c>0$.
 \end{itemize}
 Instead of the fibration method of Pohozaev used in \cite{So-JFA,WW-JFA},
 by introducing a new set of mountain pass level, directly connected with the
decomposition
 \begin{equation*}
  \mathcal{M}_{\mu}(c)= \widehat{\mathcal{M}}_{\mu}^-(c)\cup \widehat{\mathcal{M}}_{\mu}^+(c) \ \
 \hbox{ with} \ \  \widehat{\mathcal{M}}_{\mu}^\pm(c):=\left\{u\in \mathcal{M}_{\mu}(c): \Phi_{\mu}(u)\gtrless 0\right\},
 \end{equation*}
 and studying its relation with $V(c)$, Jeanjean-Le   \cite{JL-MA}
 proved that for any $c\in (0,c_0)$ ($c_0$ is given in \eqref{c0d}) and $N\ge 4$, there exists a second solution $v_c\in \mathcal{S}_c$ of mountain pass type satisfying $0<\Phi_{\mu}(v_c)<m_{\mu}(c)+\f{1}{N}\mathcal{S}^{\f{N}{2}}$, which is not a ground state solution, where $V(c)$ and $m_{\mu}(c)$ are given by \eqref{mmu1}.

 As turned out in the aforementioned papers,  two ingredients in the search of a solution of mountain pass type for \eqref{Pa1} are essential:
 I) obtaining a (PS) sequence $\{u_n\}$ at the mountain pass level having additional property $\mathcal{P}(u_n)
 \rightarrow 0$; II) proving the compactness of the obtained (PS) sequence $\{u_n\}$. Let us remark them in detail below.

 To do item I), all of them adopted the Ghoussoub minimax principle \cite{Gh} on the manifold.
 This strategy is very effective for $2<q<2+\f{4}{N}$ as well as  $2+\f{4}{N}\le q<2^*$,
 but involves technical topological arguments based on
 $\sigma$-homotopy stable family of compact subsets of $\mathcal{M}_{\mu}(c)$, moreover, the arguments are very complicated.  Note that the minimax approach developed by Jeanjean \cite{Je-NA} works only for $2+\f{4}{N}\le q<2^*$, even though  it is technically simpler.
 Thus, one may ask: {\bf{
\begin{itemize}
\item[(Q2)] If is it possible to establish a general minimax principle
 on the  manifold not involving topological arguments, technically simpler than \cite{Gh} and working for all $2<q<2^*$?
\end{itemize}
}}
To do item II), the crucial point is to derive a
 strict upper bound of mountain pass level:
 \begin{equation}\label{**}
  M_{\mu}(c)< \left\{
   \begin{array}{ll}
     m_{\mu}(c)+\f{1}{N}\mathcal{S}^{\f{N}{2}},  & \ \hbox{if}\ 2<q<2+\f{4}{N}, \\
    \f{1}{N}\mathcal{S}^{\f{N}{2}}, & \ \hbox{if}\ 2+\f{4}{N}\le q<2^*\\
   \end{array}
 \right.
 \end{equation}
 through the use of testing functions, as firstly pointed out in \cite{JL-MA} for $2<q<2+\f{4}{N}$ and in \cite{So-JFA} for $2+\f{4}{N}\le q<2^*$.
 This strict inequality can help to guarantee that the obtained (PS) sequence does not carry a bubble which,
 by vanishing when passing to the weak limit, would prevent its strong convergence, like the classical Brezis-Nirenberg problem.
 In form, such a threshold of compactness, is an analogue of the unconstrained Sobolev critical problem \eqref{1}
 with concave-convex nonlinearities. Even if this idea to get \eqref{**} may somehow been generated,
  its achievement  is rather involved since the choice of test functions is more delicate in the $L^2$-constrained context.
 Indeed, to do that, for $2<q<2+\f{4}{N}$ and $\mu c^{\f{(1-\gamma_q)q}{2}}\ge\alpha(N,q)$, Wei-Wu \cite{WW-JFA}
 used the radial superposition of  a local minima $u_c^+$ and a suitable family of truncated extremal Sobolev functions
 located in a set where the local minima solution takes its greater
 values. The strategy in \cite{WW-JFA}, recording of the one introduced
 by Tarantello \cite{Tar}, is that the interaction
 decreases the mountain pass value of  $\Phi_{\mu}\big|_{\mathcal{S}_c}$ with respect to the case where the two supports would be disjoint,
 moreover,  it works for all dimensions $N\ge 3$.
 Instead of  $\mu c^{\f{(1-\gamma_q)q}{2}}<\alpha(N,q)$ proposed by Soave \cite{So-JFA},
 Jeanjean-Le \cite{JL-MA} considered the range $c\in (0,c_0)$, and constructed non-radial test functions
 which could be viewed as the sum of a truncated extremal Sobolev function centered at the origin and
 of a local minima $u_c^+$  translated far away  from the origin. Unlike \cite{WW-JFA},
 their construction aims at separating sufficiently the regions where the functions concentrate and to show
 that the remaining interaction can be assumed sufficiently.
 However, only when $N\ge 4$ does this strategy work, and it was pointed out in \cite[Remark 1.17]{JL-MA} that:
  {\it It is not clear to us if this limitation is due to the approach we have developed or if the case $N=3$ is
fundamentally distinct from the case $N\ge 4$. We believe it would be interesting to investigate in that direction.}
Thus, a natural question arises:
{\bf{
\begin{itemize}
\item[(Q3)] Can we construct alternative testing functions, working for all dimensions $N\ge 3$,
 to derive that $ M_{\mu}(c)<m_{\mu}(c)+\f{1}{N}\mathcal{S}^{\f{N}{2}}$ for $2<q<2+\f{4}{N}$ and $c\in (0,c_0)$?
\end{itemize}
}}
\noindent
Note that the energy estimate for the case $2+\f{4}{N}\le q<2^*$ is quite different.
To do that, Soave \cite{So-JFA} used truncated and normalized extremal Sobolev functions centered at the origin as test functions, and the
dilations preserving the $L^2$-norm as test paths.
Although this choice appears to be natural and more easier in comparison to the case $2<q<2+\f{4}{N}$,  to get the desired upper estimates,
not only the cases $2+\f{4}{N}< q<2^*$ and $q=2+\f{4}{N}$, but the dimensions $N=3$ and $N\ge 4$ in each case, all needed to be treated separately in the proofs.
Later, the same strategy was employed by  Wei-Wu \cite{WW-JFA}.
For this case, we also refer to \cite[Theorem 1.1]{AJM}, in which $\mu$ must be large enough and the mountain pass level was controlled small enough
by the approximate scheme $M_{\mu}(c)\to 0$ as $\mu\to +\infty$. Now, pursuing the study of  \cite{So-JFA,WW-JFA},
 in the case $2+\f{4}{N}\le q<2^*$, it is natural to ask:
 {\bf{
\begin{itemize}
\item[(Q4)] Can we find an unified scheme to treat cases $2+\f{4}{N}< q<2^*$ and $q=2+\f{4}{N}$, as well as the dimensions $N=3$ and $N\ge 4$ in each case?
\end{itemize}
}}

 In the second part of this paper, we are interested in Questions (Q2)-(Q4), and we shall solve them in turn by developing some new analytical strategies and techniques.

 \par
  To present the abstract minimax principle on the manifold involving Question (Q2), inspired by
 \cite{BL2,Je-NA,WM}, we develop new critical point theories on a manifold, see Lemmas \ref{lem 2.3} and
 \ref{lem 2.13}, Theorems \ref{thm 2.4} and \ref{thm 2.14}, Corollaries \ref{cor 2.5} and \ref{cor 2.15},
 which present a different approach to construct a bounded (PS) sequence on a manifold technically simpler than \cite{Gh}. They may be considered as counterparts of the deformation lemma and general minimax principle due to Willem
 \cite{WM} in the constraint context. We believe, these results can be adapted to more $L^2$-constrained problems. In the proofs, we use the general deformation lemma on a manifold (see Lemma \ref{lem 2.3}), instead of  technical topological arguments of \cite{Gh}. This shows that critical point theories introduced in the first part,
 mentioned in Remark \ref{rem1.1} (ii), are of future development and applicability to some extend.

 \par
  On Questions (Q3) and (Q4), we have the following results, respectively, in which
  $\Phi_{\mu}$, $\mathcal{M}_{\mu}(c)$, $m_{\mu}(c)$ and $c_0$ are defined before by \eqref{Phu}, \eqref{Muc}, \eqref{mmu1} and \eqref{c0d}.

 \begin{theorem}\label{thm 1.2} Let $N\ge 3, 2<q<2+\f{4}{N}, \mu>0$ and $c\in (0, c_0)$. Then \eqref{Pa1}
 has a second couple solution $(u_c,\lambda_c) \in H_{\mathrm{rad}}^1 (\R^N) \times (-\infty,0)$ such that
 \begin{equation}\label{M24}
   0<\Phi_{\mu}(u_c)< m_{\mu}(c)+\f{1}{N}\mathcal{S}^{\f{N}{2}}.
 \end{equation}
 \end{theorem}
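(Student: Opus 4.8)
The plan is to produce $u_c$ as a second critical point of $\Phi_\mu$ on the radial sphere $\mathcal{S}_c\cap H_{\mathrm{rad}}^1(\R^N)$, located at a mountain pass level strictly above the local minimizer of Theorem~[S]. First I would fix the geometry: for $c\in(0,c_0)$, property \eqref{mmu1} provides (by Theorem~[S], $2<q<2+\tfrac{4}{N}$) a radial positive local minimizer $u_c^+$ with $\Phi_\mu(u_c^+)=m_\mu(c)<0<\inf_{\partial V(c)}\Phi_\mu$, while along the mass-preserving dilation $t\mapsto\tilde\phi_u(t)$ the energy $\Phi_\mu$ goes to $-\infty$ because $2^*>2+\tfrac{4}{N}$. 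Working inside $H_{\mathrm{rad}}^1$ this yields a mountain pass landscape with value
\[
 M_\mu(c):=\inf_{\gamma\in\Gamma_c}\ \max_{s\in[0,1]}\Phi_\mu(\gamma(s))\ \ge\ \inf_{\partial V(c)}\Phi_\mu\ >\ 0 ,
\]
where $\Gamma_c$ collects the continuous paths in $\mathcal{S}_c\cap H_{\mathrm{rad}}^1$ joining $u_c^+$ to a fixed large dilation of $u_c^+$ of very negative energy.

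Next I would feed this into the abstract minimax principle on a manifold built in Section~2 (the radial versions of Lemma~\ref{lem 2.13}, Theorem~\ref{thm 2.14} and Corollary~\ref{cor 2.15}), applied on $(\mathcal{S}_c\cap H_{\mathrm{rad}}^1)\times\R$ to the fibered functional $(u,t)\mapsto\Phi_\mu\big(e^{Nt/2}u(e^t\cdot)\big)$, whose mountain pass level coincides with $M_\mu(c)$. Since this principle relies only on a deformation lemma on the manifold, it sidesteps the $\sigma$-homotopy-stable-family arguments of \cite{Gh}, works for all $2<q<2^*$ (this is Question~(Q2)), and yields a Palais--Smale sequence $\{u_n\}\subset\mathcal{S}_c\cap H_{\mathrm{rad}}^1$ for $\Phi_\mu|_{\mathcal{S}_c}$ at level $M_\mu(c)$ with the extra Pohozaev datum $\mathcal{P}_\mu(u_n)\to0$. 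Combining $\Phi_\mu(u_n)\to M_\mu(c)$, $\mathcal{P}_\mu(u_n)\to0$, $\|u_n\|_2^2=c$, the Gagliardo--Nirenberg and Sobolev inequalities, and $2<q<2+\tfrac4N$, one gets a uniform $H^1$-bound on $\{u_n\}$; testing the approximate equation $-\Delta u_n+\lambda_n u_n=\mu|u_n|^{q-2}u_n+|u_n|^{2^*-2}u_n+o(1)$ against $u_n$ then produces a bounded sequence of multipliers $\lambda_n\to\lambda_c$, whose sign (as in \eqref{Pa}) is read off from the Nehari and Pohozaev identities in the limit.

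The crux -- and the step I expect to be the main obstacle -- is the strict energy bound
\[
 M_\mu(c)\ <\ m_\mu(c)+\tfrac1N\,\mathcal{S}^{N/2}\qquad\text{for every }N\ge3 ,
\]
which is Question~(Q3). Instead of the Jeanjean--Le scheme of translating a local minimizer far away from an origin-centered bubble (which only works for $N\ge4$), I would keep everything radial and test with a \emph{radial superposition} $w_\varepsilon=u_c^++U_\varepsilon$, where $U_\varepsilon$ is a truncated rescaled Aubin--Talenti function concentrated in the region where $u_c^+$ attains its larger values (in the spirit of Tarantello and Wei--Wu), renormalized so that $w_\varepsilon\in\mathcal{S}_c$ and then slid along the dilation path $s\mapsto e^{Ns/2}w_\varepsilon(e^s\cdot)$. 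The concavity--convexity interaction should pull the maximal energy along this path strictly below $m_\mu(c)+\tfrac1N\mathcal{S}^{N/2}$: the critical term obeys $\|u_c^++U_\varepsilon\|_{2^*}^{2^*}\ge\|u_c^+\|_{2^*}^{2^*}+\|U_\varepsilon\|_{2^*}^{2^*}$ plus strictly positive cross terms, whereas the cross gradient $\int_{\R^N}\nabla u_c^+\cdot\nabla U_\varepsilon$, the $L^q$ contribution and the $L^2$-renormalization cost are all lower order. The truly delicate case is $N=3$, where the truncated bubble carries an $L^2$-mass of size $O(\varepsilon)$ -- against $O(\varepsilon^2|\ln\varepsilon|)$ for $N=4$ and $O(\varepsilon^2)$ for $N\ge5$ -- and its cross interactions decay more slowly; the plan is to pit these $O(\varepsilon)$ errors against the strictly negative gain from the $2^*$ interaction and to use that $2<q<2+\tfrac4N=\tfrac{10}{3}$ is genuinely $L^2$-subcritical to absorb the remaining lower-order terms. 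Carrying this estimate out explicitly, uniformly in $N\ge3$, is the technical heart of the theorem.

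Finally, with $0<M_\mu(c)<m_\mu(c)+\tfrac1N\mathcal{S}^{N/2}$ in hand, I would close the compactness step. Passing to a subsequence, $u_n\rightharpoonup u_c$ in $H_{\mathrm{rad}}^1(\R^N)$ and $u_c$ solves \eqref{Pa1} with multiplier $\lambda_c$; by the now-standard analysis for Brezis--Nirenberg-type problems (a Brezis--Lieb splitting of $\|\nabla u_n\|_2^2$ and $\|u_n\|_{2^*}^{2^*}$, together with the compact embedding $H_{\mathrm{rad}}^1\hookrightarrow L^q$), any loss of compactness would manifest as at least one Sobolev bubble of energy $\ge\tfrac1N\mathcal{S}^{N/2}$, forcing $M_\mu(c)\ge m_\mu(c)+\tfrac1N\mathcal{S}^{N/2}$ and contradicting the strict bound. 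Hence $u_n\to u_c$ strongly, $\|u_c\|_2^2=c$, $\Phi_\mu(u_c)=M_\mu(c)\in\big(0,\,m_\mu(c)+\tfrac1N\mathcal{S}^{N/2}\big)$, and $(u_c,\lambda_c)\in H_{\mathrm{rad}}^1(\R^N)\times(-\infty,0)$ solves \eqref{Pa1}; since $\Phi_\mu(u_c)=M_\mu(c)>0>m_\mu(c)=\Phi_\mu(u_c^+)$, $u_c$ is distinct from the ground state of Theorem~[S], i.e.\ a genuine second solution, and \eqref{M24} holds.
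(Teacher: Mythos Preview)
Your overall architecture matches the paper's: mountain pass geometry above the radial local minimizer $u_c^+$, a Palais--Smale sequence at level $M_\mu(c)$ with the extra Pohozaev information $\mathcal{P}_\mu(u_n)\to 0$ produced via Corollary~\ref{cor 2.15} applied to the fibered functional on $(\mathcal{S}_c\cap H^1_{\mathrm{rad}})\times\R$, and then compactness once $M_\mu(c)<m_\mu(c)+\tfrac1N\mathcal{S}^{N/2}$ is known. For the compactness step the paper simply quotes \cite[Proposition~1.11]{JL-MA} (recorded as Lemma~\ref{lem 4.5}), which is exactly the Brezis--Lieb/bubble argument you sketch.

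The genuine divergence is in the test path for the strict energy estimate. You propose to fix $w_\varepsilon=u_c^++U_\varepsilon$, renormalize its mass, and then run the \emph{dilation} $s\mapsto e^{Ns/2}w_\varepsilon(e^s\cdot)$ (so both $u_c^+$ and the bubble get rescaled together). The paper instead keeps $u_c$ undilated and varies the \emph{bubble amplitude}: it sets
\[
W_{n,t}(x):=\tau^{(N-2)/2}\bigl[u_c(\tau x)+tU_n(\tau x)\bigr],\qquad \tau=\tau(t)=\|u_c+tU_n\|_2/\sqrt{c},
\]
so that $t=0$ gives exactly $u_c$ and the path lies in $\Gamma_{\mu,c}$ from the start. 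The choice of the renormalizing map $v\mapsto \tau^{(N-2)/2}v(\tau\,\cdot)$ is the key trick: it leaves $\|\nabla\cdot\|_2$ and $\|\cdot\|_{2^*}$ invariant, so $\Phi_\mu(W_{n,t})$ splits cleanly into $\Phi_\mu(u_c)+\mathcal{S}^{N/2}\bigl(\tfrac{t^2}{2}-\tfrac{t^{2^*}}{2^*}\bigr)$ plus tractable error terms, with the only renormalization factor $\tau^{q\gamma_q-q}$ hitting the subcritical $L^q$ piece. This is what makes the $N=3$ case go through without the careful balancing you anticipate for the dilation path: for $3\le N\le 5$ the paper exploits the cross term $t^{2^*-1}\int u_c\,U_n^{2^*-1}\sim n^{-(N-2)/2}$ (available because $2^*\ge 3$), while for $N\ge 6$ it uses $t^q\|U_n\|_q^q$ instead. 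Your dilation route (closer in spirit to \cite{WW-JFA}) should also succeed, but the bookkeeping is heavier since both components are simultaneously rescaled and the maximum in $s$ is an implicit function of all the mixed norms; the paper's amplitude parametrization buys a much more transparent computation and a natural starting point at $u_c$.
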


 \begin{theorem}\label{thm 1.3} Let $N\ge 3$, $c>0$ and $2+\f{4}{N}\le q<2^*$. Then, \eqref{Pa1}
 has a couple solution $(\bar{u}_c,\lambda_c) \in H^1 (\R^N) \times (-\infty,0)$ such that
 \begin{equation}\label{gs1}
   \Phi_{\mu}(\bar{u}_{c})=\inf_{\mathcal{M}_{\mu}(c)}\Phi_{\mu}
 \end{equation}
  \begin{enumerate}[{\rm (i)}]
  \item for any $\mu>0$ if $2+\f{4}{N}<q<2^*$;
  \item for any $0<\mu\le \f{1}{2\gamma_{\bar{q}}c^{2/N}\mathcal{C}_{N,\bar{q}}^{\bar{q}}}$ if $q=\bar{q}=2+\f{4}{N}$.
 \end{enumerate}
 \end{theorem}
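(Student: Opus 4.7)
The plan is to find $\bar u_c$ as the minimum of $\Phi_\mu$ over the Pohozaev manifold $\mathcal{M}_\mu(c)$, which coincides with the mountain pass level $M_\mu(c)$ of $\Phi_\mu|_{\mathcal{S}_c}$. For each $u\in\mathcal{S}_c$ I introduce the $L^2$-preserving dilation $(s\star u)(x):=e^{Ns/2}u(e^s x)$ and the fiber map $\psi_u(s):=\Phi_\mu(s\star u)$. A direct computation shows that when $q\ge\bar{q}=2+4/N$ the map $\psi_u$ admits a unique strict global maximum at some $s_u\in\R$, with $\psi_u(-\infty)=0$ and $\psi_u(+\infty)=-\infty$; in case~(ii) the smallness condition $\mu\le 1/(2\gamma_{\bar{q}}c^{2/N}\mathcal{C}_{N,\bar{q}}^{\bar{q}})$ is exactly what absorbs the $L^2$-critical contribution $\mu\gamma_{\bar{q}}\|u\|_{\bar{q}}^{\bar{q}}$ via the Gagliardo--Nirenberg inequality. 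The projection $u\mapsto s_u\star u$ maps $\mathcal{S}_c$ onto $\mathcal{M}_\mu(c)$, gives $M_\mu(c)=\inf_{\mathcal{M}_\mu(c)}\Phi_\mu>0$ and supplies the mountain pass geometry.

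Next I invoke the new abstract minimax principle on a manifold developed in Section~2 (Theorem~\ref{thm 2.14} / Corollary~\ref{cor 2.15}) as an alternative to the Ghoussoub scheme; this directly produces a Palais--Smale sequence $\{u_n\}\subset\mathcal{S}_c$ for $\Phi_\mu|_{\mathcal{S}_c}$ at level $M_\mu(c)$ carrying the additional Pohozaev information $\mathcal{P}_\mu(u_n)\to 0$. Combining $\Phi_\mu(u_n)\to M_\mu(c)$ with $\mathcal{P}_\mu(u_n)\to 0$, a single linear combination of the two directly controls $\|\nabla u_n\|_2^2$, so $\{u_n\}$ is bounded in $H^1(\R^N)$; replacing each $u_n$ by its Schwarz rearrangement, which respects both $\Phi_\mu$ and $\mathcal{P}_\mu$, I may further assume $u_n\in H^1_{\mathrm{rad}}(\R^N)$ and recover compactness of the embeddings into $L^r$ for $2<r<2^*$.

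The crux, answering~(Q4), is to establish the strict upper bound
\[
  M_\mu(c)\;<\;\tfrac{1}{N}\mathcal{S}^{N/2}
\]
in a unified way for every $N\ge 3$ and every $q\in[\bar{q},2^*)$. I test $M_\mu(c)$ along the $L^2$-normalised dilation path $\{s\star v_\varepsilon:s\in\R\}$, where $v_\varepsilon$ is the $L^2$-renormalisation of a truncated Talenti extremal $U_\varepsilon$. Standard asymptotic expansions give $\|\nabla v_\varepsilon\|_2^2/\|v_\varepsilon\|_{2^*}^2=\mathcal{S}+O(\varepsilon^{N-2})$ with a logarithmic factor in low dimension, while $\mu\|v_\varepsilon\|_q^q$ contributes a strictly negative correction of order $\mu\varepsilon^{\beta(N,q)}$ to $\sup_s\psi_{v_\varepsilon}(s)$. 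I expect this to be the main obstacle: in $N=3$ the bubble decays slowly and $\beta(N,q)$ degenerates towards the Sobolev loss, which is why the earlier treatments in \cite{So-JFA,WW-JFA} had to handle $N=3$ and $N\ge 4$ separately, as well as $q=\bar q$ separately from $q>\bar q$. My unified scheme is to maximise $\psi_{v_\varepsilon}$ explicitly in $s$ and reduce the comparison to a single sharp inequality; the strict supercriticality $q>\bar{q}$ in case~(i), and the bound $\mu\le 1/(2\gamma_{\bar{q}}c^{2/N}\mathcal{C}_{N,\bar{q}}^{\bar{q}})$ in case~(ii), together with a refined lower estimate of $\|v_\varepsilon\|_q^q$ valid in every dimension, are exactly what preserve a strict negative residue in every configuration.

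With the strict inequality $M_\mu(c)<\frac{1}{N}\mathcal{S}^{N/2}$ secured, a standard Brezis--Lieb splitting rules out Sobolev-critical bubbling for $\{u_n\}$, and $u_n\to\bar u_c$ strongly in $H^1_{\mathrm{rad}}(\R^N)$ up to a subsequence. The limit satisfies $\|\bar u_c\|_2^2=c$, $\mathcal{P}_\mu(\bar u_c)=0$ and $\Phi_\mu(\bar u_c)=M_\mu(c)=\inf_{\mathcal{M}_\mu(c)}\Phi_\mu$. The Lagrange multiplier $\lambda_c$ is recovered in the limit from $\Phi_\mu'(u_n)-\lambda_n u_n\to 0$ in $H^{-1}$; combining with the Pohozaev identity gives $\lambda_c c=\mu(\gamma_q-1)\|\bar u_c\|_q^q$, strictly negative since $\gamma_q<1$, yielding $\lambda_c\in(-\infty,0)$ as claimed.
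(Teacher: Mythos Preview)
Your overall architecture matches the paper's, but there is one genuine gap and one substantive methodological difference worth flagging.

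\medskip
\textbf{The gap: the Schwarz symmetrisation step.} You write that you ``replace each $u_n$ by its Schwarz rearrangement, which respects both $\Phi_\mu$ and $\mathcal P_\mu$''. This is false on two counts. First, rearrangement preserves all $L^p$ norms but only gives $\|\nabla u_n^*\|_2\le\|\nabla u_n\|_2$ (P\'olya--Szeg\H{o}), so one obtains $\Phi_\mu(u_n^*)\le\Phi_\mu(u_n)$ and $\mathcal P_\mu(u_n^*)\le\mathcal P_\mu(u_n)$, with strict inequality in general; in particular $\mathcal P_\mu(u_n^*)\to 0$ need not hold. Second, and more seriously, rearrangement does \emph{not} preserve the Palais--Smale property: there is no mechanism taking $\Phi_\mu|_{\mathcal S_c}'(u_n)\to 0$ to $\Phi_\mu|_{\mathcal S_c}'(u_n^*)\to 0$. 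So after symmetrising you no longer have a (PS) sequence and cannot pass to the limit in the equation. The correct alternatives are either (a) run Corollary~\ref{cor 2.15} directly on $E=H^1_{\mathrm{rad}}(\R^N)$ from the outset, producing a radial (PS) sequence without any post-hoc rearrangement, or (b) do what the paper actually does in the proof of Theorem~\ref{thm 1.3}: stay in $H^1(\R^N)$, use Lions' concentration--compactness to locate a nonvanishing profile, translate, pass to a weak limit $\hat u\neq 0$ with $\|\hat u\|_2^2=\hat c\le c$, and then invoke the strict monotonicity $c\mapsto\hat m_\mu(c)$ (Lemma~\ref{lem 4.13}) to force $\hat c=c$. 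Even under option (a) you would still need a device to rule out $\|\bar u_c\|_2^2<c$, since radial $H^1$ does not embed compactly in $L^2$; either Lemma~\ref{lem 4.13} or the argument via $\lambda_c<0$ tested against $u_n-\bar u$ is required, and your sketch omits this.

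\medskip
\textbf{The difference: the test function for the strict energy bound.} Your plan for $M_\mu(c)<\tfrac{1}{N}\mathcal S^{N/2}$ is the Soave-type scheme---truncate an Aubin--Talenti bubble at a fixed radius, $L^2$-renormalise, and optimise in~$s$---together with an unspecified ``refined lower estimate of $\|v_\varepsilon\|_q^q$''. The paper takes a different route (Lemma~\ref{lem 4.14}): it builds test functions $\tilde U_n\in\mathcal S_c$ directly, with no renormalisation, by truncating the bubble at the \emph{growing} radius $n^{2/3}$ and extending by an affine cut-off out to $R_n\sim n^{7(N-2)/(3N)}$, where $R_n$ is tuned so that $\|\tilde U_n\|_2^2=c$ exactly. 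With this choice the error in $\|\nabla\tilde U_n\|_2^2$ is $O(n^{-14(N-2)/(3N)})$ while $\|\tilde U_n\|_q^q\gtrsim n^{-(N-(N-2)q/2)}$, and since $2+\tfrac{4}{N}\le q<2^*$ one checks $N-\tfrac{(N-2)q}{2}<\tfrac{14(N-2)}{3N}$ for every $N\ge 3$, which is precisely what delivers the strict inequality uniformly in $N$ and in $q$, without splitting into $N=3$ versus $N\ge 4$ or $q=\bar q$ versus $q>\bar q$. Your proposal may be salvageable, but as written it is the approach that the paper says forced case distinctions in \cite{So-JFA,WW-JFA}; you would need to make the ``single sharp inequality'' explicit to claim a unified treatment.
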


 \begin{remark}\label{rem1.2}
 Theorem {\rm\ref{thm 1.2}} gives a second solution of mountain pass type by constructing a more simple geometry of
 the mountain pass than the one in {\rm\cite{JL-MA}}, and Theorem  {\rm\ref{thm 1.3}} complements the
 results of  {\rm\cite[Theorem 1.1]{WW-JFA}}.
  In particular, to derive  the strict inequality  {\rm\eqref{**}}, we introduce alternative choices of test functions in the cases
   $2<q<2+\f{4}{N}$ and $2+\f{4}{N}\le q<2^*$,
  which, together with subtle estimates and analyses, allows us to treat uniformly the dimensions $N\ge 3$,
  see Lemmas {\rm\ref{lem 4.4}} and {\rm\ref{lem 4.14}}.
  Thus, Questions {\rm(Q3)} and {\rm(Q4)} are fully settled.
  We believe that our strategy of energy estimates is helpful to other $L^2$-constrained problems with Sobolev critical growth.
 \end{remark}

 \bigskip
  \par
 The paper is organized as follows. Section 2, is devoted to establish several new critical point theories on a manifold
 and finish the proof of Theorem \ref{thm 2.5},
 which will be applied to prove Theorems \ref{thm1.1}, \ref{thm 1.2} and \ref{thm 1.3}.
 In Section 3, we study $L^2$-supercritical problem \eqref{Pa} and prove Theorem \ref{thm1.1}.
In Section 4, we study Normalized solutions for mixed problem \eqref{Pa1} with Sobolev  critical exponent,
 and finish the proofs of Theorems \ref{thm 1.2} and  \ref{thm 1.3}.

 \bigskip
 \par
 Throughout the paper, we make use of the following notations:

 \par
  $\bullet$  $H_{\mathrm{rad}}^1(\R^2):=\{u\in H^1(\R^2)\ \big|\ u(x)=u(|x|)\ \hbox{a.e. in } \R^N\}$;

     $\bullet$ \ $L^s(\R^2) (1\le s< \infty)$  denotes the Lebesgue space with the norm $\|u\|_s
 =\left(\int_{\R^2}|u|^s\mathrm{d}x\right)^{1/s}$;

 \par
  $\bullet$ \ For any $N \ge 3$ there exists
 an optimal constant $\mathcal{S} > 0$ depending only on $N$, such that
 \begin{equation}\label{Sob}
   \mathcal{S}\|u\|_{2^*}^{2}\le \|\nabla u\|_2^{2},\ \ \forall \ u\in \mathcal{D}^{1,2}(\R^N)
    \ \ \mbox{(Sobolev inequality)}
 \end{equation}
 and $\mathcal{C}_{N,s}>0$ such that
 \begin{equation}\label{GN}
   \|u\|_s\le \mathcal{C}_{N,s}\|\nabla u\|_2^{\gamma_s}\|u\|_{2}^{1-\gamma_s}\ \ \ \ \forall \ u\in H^{1}(\R^N)
    \ \ \mbox{(Gagliardo-Nirenberg inequality)};
 \end{equation}

 \par
     $\bullet$ \ For any $x\in \R^N$ and $r>0$, $B_r(x):=\{y\in \R^N: |y-x|<r \}$
     and $B_r=B_r(0)$;

 \par
     $\bullet$ \ $C_1, C_2,\cdots$ denote positive constants possibly different in different places.

{\section{Critical point theories on a manifold}}
 \setcounter{equation}{0}
 \par

 In this section, we give several new critical point theorems on a manifold, and complete the proof of Theorem \ref{thm 2.5}.
  We shall apply these theorems to obtain (PS) sequences restricted on a manifold and study problems \eqref{Pa} and \eqref{Pa1}.

 \par
  Let $H$ be a real Hilbert space whose norm and scalar product will be denoted
  respectively by $\|\cdot\|_H$ and $(\cdot, \cdot)_H$. Let $E$ be a real Banach space with norm $\|\cdot\|_E$. We
 assume throughout this section that
 \begin{equation}\label{EH}
   E\hookrightarrow H \hookrightarrow E^*
 \end{equation}
 with continuous injections, where $E^*$ is the dual space of $E$. Thus $H$ is identified with its dual space.
 We will always assume in the sequel that $E$ and $H$ are infinite dimensional spaces. We consider the manifold
 \begin{equation}\label{MD}
   M:=\{u\in E: \|u\|_H=1\}.
 \end{equation}
 $M$ is the trace of the unit sphere of $H$ in $E$ and is, in general, unbounded. Throughout the paper, $M$
 will be endowed with the topology inherited from $E$. Moreover $M$ is a submanifold of $E$ of codimension
 1 and its tangent space at a given point $u\in M$ can be considered as a closed subspace of $E$ of codimension 1, namely
 \begin{equation}\label{TM}
   T_uM:=\{v\in E: (u,v)_H=0\}.
 \end{equation}

 \par
  We consider a functional $\varphi: E\rightarrow \R$ which is of class $\mathcal{C}^1$ on $E$.
 We denote by $\varphi|_{M}$ the trace of $\varphi$ on $M$. Then $\varphi|_{M}$  is a $\mathcal{C}^1$
 functional on $M$, and for any $u\in M$,
 \begin{equation}\label{DM}
   \langle\varphi|_{M}'(u), v\rangle=\langle\varphi'(u), v\rangle,\ \ \forall \ v\in T_uM.
 \end{equation}
 In the sequel, for any $u\in M$, we define the norm $\left\|\varphi|_{M}'(u)\right\|$ by
 \begin{equation}\label{DMN}
   \left\|\varphi|_{M}'(u)\right\|=\sup_{v\in T_uM,\|v\|_E=1}|\langle\varphi'(u), v\rangle|.
 \end{equation}

 Before this, we present some basic definitions and results.
 For any $\delta>0$ and any set $A\subset E$, we denote $A_{\delta}:=\{v\in E: \|u-v\|<\delta,
  \ \forall \ u\in A\}$. For any $a\in \R$ and any $\varphi: E\rightarrow \R$, we denote
 $\varphi^a:=\{u\in E : \varphi(u)\le a\}$.

 \begin{definition}\label{def 2.0} {\rm\cite{BL2,WM}} A pseudo-gradient vector for $\varphi|_{M}$ at $u\in
 \tilde{M}:=\{u\in M: \varphi|_{M}'(u)\ne 0\}$ is a vector $v\in T_uM$ such that
 $$
   \|v\|_E\le 2\left\|\varphi|_{M}'(u)\right\|,\ \ \langle\varphi|_{M}'(u),v\rangle\ge \left\|\varphi|_{M}'(u)\right\|^2.
 $$
 Put $T(M):=\bigcup_{u\in M}(T_uM$). A mapping (lifting) $g: \tilde{M}\rightarrow T(M)$ is called a pseudo-gradient vector field for $\varphi|_{M}$ on $\tilde{M}$ if $g$ is locally Lipschitz continuous, $g(u)\in T_uM$ for $u\in \tilde{M}$, and $g(u)$ is a pseudo-gradient vector for $\varphi|_{M}$ at any $u\in \tilde{M}$.
 \end{definition}

 \begin{lemma}\label{lem 2.1} {\rm\cite{BL2,WM}} If $\varphi|_{M}\in \mathcal{C}^1(M,\R)$, then there exists a pseudo-gradient vector field.
 \end{lemma}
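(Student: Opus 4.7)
\medskip
\noindent\textbf{Proof plan.} The strategy is to mimic the classical construction of a pseudo-gradient vector field on a Banach space (Willem, Theorem~2.4), with one extra ingredient: at every point the selected vector must lie in the tangent space $T_uM$, which itself varies with $u$. I will cure this by projecting pre-selected vectors onto the current tangent space via the map
\begin{equation*}
P_u : E\to T_uM,\qquad P_u(v):=v-(u,v)_H\,u,
\end{equation*}
which is well defined for $u\in M$ since $E\hookrightarrow H$, and satisfies $(u,P_u(v))_H=0$ because $\|u\|_H=1$.

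First I would produce pseudo-gradient vectors pointwise. For each $u_0\in\tilde M$, by the definition of $\|\varphi|_M'(u_0)\|$ there exists $w_0\in T_{u_0}M$ with $\|w_0\|_E=1$ and $\langle\varphi'(u_0),w_0\rangle>\tfrac{2}{3}\|\varphi|_M'(u_0)\|$. Setting $v_0:=\tfrac{3}{2}\|\varphi|_M'(u_0)\|\,w_0\in T_{u_0}M$ gives
\begin{equation*}
\|v_0\|_E\le \tfrac{3}{2}\|\varphi|_M'(u_0)\|,\qquad \langle\varphi'(u_0),v_0\rangle>\|\varphi|_M'(u_0)\|^2,
\end{equation*}
so $v_0$ is, with a bit of slack, a pseudo-gradient vector at $u_0$.

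Next I would propagate this to a neighbourhood via the projection. For $u\in\tilde M$ close to $u_0$ in the $E$-topology, the vector $P_u(v_0)=v_0-(u,v_0)_H\,u$ lies in $T_uM$, and since $(u_0,v_0)_H=0$ one has $P_u(v_0)\to v_0$ in $E$ as $u\to u_0$. Because $\varphi'$ is $E^*$-continuous and $u\mapsto\|\varphi|_M'(u)\|$ is lower semicontinuous, the slack built in above lets me choose an open neighbourhood $N(u_0)$ of $u_0$ in $\tilde M$ on which
\begin{equation*}
\|P_u(v_0)\|_E\le 2\|\varphi|_M'(u)\|,\qquad \langle\varphi'(u),P_u(v_0)\rangle\ge\|\varphi|_M'(u)\|^2.
\end{equation*}
Thus $P_\cdot(v_0)$ is simultaneously a pseudo-gradient vector at every point of $N(u_0)$, and the map $u\mapsto P_u(v_0)$ is locally Lipschitz continuous on $E$ (it is affine in $u$).

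Finally I would globalise by partition of unity. The open cover $\{N(u_0):u_0\in\tilde M\}$ of the metric space $\tilde M$ admits, by paracompactness, a locally finite open refinement $\{N_i\}_{i\in I}$ with associated chosen vectors $v_i\in E$; take a locally Lipschitz partition of unity $\{\rho_i\}_{i\in I}$ subordinate to $\{N_i\}$ (e.g.\ $\rho_i(u)=\mathrm{dist}(u,\tilde M\setminus N_i)/\sum_j\mathrm{dist}(u,\tilde M\setminus N_j)$). Define
\begin{equation*}
g(u):=\sum_{i\in I}\rho_i(u)\,P_u(v_i),\qquad u\in\tilde M.
\end{equation*}
The sum is locally finite, each summand is locally Lipschitz in $u$, so $g$ is locally Lipschitz on $\tilde M$; $g(u)\in T_uM$ since each $P_u(v_i)\in T_uM$ and $T_uM$ is linear; and the two pseudo-gradient inequalities pass to the convex combination, yielding $\|g(u)\|_E\le 2\|\varphi|_M'(u)\|$ and $\langle\varphi'(u),g(u)\rangle\ge\|\varphi|_M'(u)\|^2$ for every $u\in\tilde M$.

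\medskip
The main obstacle is the interplay between the varying tangent spaces and the Lipschitz selection: one cannot simply reuse a vector $v_0\in T_{u_0}M$ at nearby points because it falls out of $T_uM$. The projection step, together with the continuity of $(u,v_0)_H$ in $u$, is precisely what reconciles the tangential constraint with the convex-combination argument; once this is in place the rest is routine paracompactness.
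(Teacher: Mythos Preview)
The paper does not give its own proof of this lemma; it simply cites Berestycki--Lions and Willem. Your construction is precisely the standard one found in those references: pick an almost-optimal tangent vector at each point, propagate it to a neighbourhood, and glue via a locally Lipschitz partition of unity, using the projection $P_u$ to accommodate the varying tangent spaces. So the approach matches.

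One small slip worth fixing: lower semicontinuity of $u\mapsto\|\varphi|_M'(u)\|$ is not enough for the second inequality $\langle\varphi'(u),P_u(v_0)\rangle\ge\|\varphi|_M'(u)\|^2$ to persist on a neighbourhood, since for that you must prevent the right-hand side from jumping \emph{up}. What you need (and in fact have) is continuity. Upper semicontinuity follows by the same projection trick: if $u_n\to u$ in $E$ and $v_n\in T_{u_n}M$, $\|v_n\|_E=1$, nearly realises $\|\varphi|_M'(u_n)\|$, then $(u,v_n)_H=(u-u_n,v_n)_H\to 0$ by $E\hookrightarrow H$, so $P_u(v_n)-v_n\to 0$ in $E$; combined with the $E^*$-continuity of $\varphi'$ this gives $\limsup_n\|\varphi|_M'(u_n)\|\le\|\varphi|_M'(u)\|$. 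With ``lower semicontinuous'' replaced by ``continuous'' your argument is complete.
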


 \begin{lemma}\label{lem 2.2} {\rm\cite{BL2}} Let $\{u_n\}\subset M$ be a bounded sequence in $E$. Then
 the following are equivalent:
 \begin{enumerate}[{\rm (i)}]
  \item $\|\varphi|_{M}'(u_n)\|\rightarrow 0$ as $n\rightarrow \infty$;
  \item $\varphi'(u_n)-\langle\varphi'(u_n),u_n\rangle u_n\rightarrow 0$ in $E^*$ as $n\rightarrow \infty$.
 \end{enumerate}
 \end{lemma}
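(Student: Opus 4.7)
The plan is to prove a pointwise two-sided norm equivalence
\[
  \|\varphi|_{M}'(u)\| \;\le\; \|\varphi'(u) - \langle\varphi'(u), u\rangle u\|_{E^*} \;\le\; (1 + c_1\|u\|_E)\,\|\varphi|_{M}'(u)\|,
\]
valid for every $u\in M$, where $c_1$ is the norm of the embedding $E\hookrightarrow H$. Once it is established, applying the inequality to the sequence $\{u_n\}\subset M$ and using the boundedness of $\|u_n\|_E$ gives (i)$\Leftrightarrow$(ii) at once: the right-hand inequality turns (i) into (ii), and the left-hand inequality (which needs no boundedness at all) turns (ii) into (i).

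The main tool is an orthogonal-type decomposition on $E$. For any $v\in E$, write
\[
  v \;=\; v_0 + (v,u)_H\, u, \qquad v_0 \;:=\; v - (v,u)_H\, u.
\]
Since $\|u\|_H=1$, a direct check gives $(v_0,u)_H = 0$, so $v_0 \in T_uM$ by \eqref{TM}. Using the identification $E\hookrightarrow H\hookrightarrow E^*$, under which the $H$-inner product $(u,v)_H$ coincides with the duality pairing $\langle u,v\rangle$ when $u$ is viewed as an element of $E^*$, one expands
\[
  \langle \varphi'(u) - \langle\varphi'(u),u\rangle u,\; v\rangle \;=\; \langle\varphi'(u), v\rangle - \langle\varphi'(u),u\rangle (u,v)_H \;=\; \langle\varphi'(u), v_0\rangle \;=\; \langle \varphi|_{M}'(u), v_0\rangle,
\]
where the last equality is \eqref{DM}. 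This identity is the heart of the proof.

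From here the two bounds follow routinely. The upper bound uses $\|v_0\|_E \le \|v\|_E + |(v,u)_H|\,\|u\|_E \le (1 + c_1\|u\|_E)\|v\|_E$ together with $|\langle\varphi|_{M}'(u), v_0\rangle|\le \|\varphi|_{M}'(u)\|\,\|v_0\|_E$, after which one takes the supremum over $v \in E$ with $\|v\|_E=1$. The lower bound is immediate, since for $v\in T_uM$ with $\|v\|_E = 1$ one has $v_0 = v$, so the identity above reproduces $\langle\varphi|_{M}'(u), v\rangle$ exactly. I do not anticipate a serious obstacle; the only delicate point is keeping the chain $E\hookrightarrow H\hookrightarrow E^*$ straight so that $\varphi'(u)-\langle\varphi'(u),u\rangle u$ unambiguously defines an element of $E^*$ through the inclusion of $u$ into $E^*$, and noting that the boundedness of $\|u_n\|_E$, rather than merely $\|u_n\|_H=1$, is precisely what makes the constant $1+c_1\|u_n\|_E$ uniform in $n$.
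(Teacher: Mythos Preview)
Your argument is correct. The paper does not actually supply a proof of this lemma; it simply states the result and attributes it to Berestycki--Lions \cite{BL2}. So there is no ``paper's own proof'' to compare against. Your pointwise two-sided estimate
\[
  \|\varphi|_{M}'(u)\| \;\le\; \|\varphi'(u) - \langle\varphi'(u), u\rangle u\|_{E^*} \;\le\; (1 + c_1\|u\|_E)\,\|\varphi|_{M}'(u)\|
\]
is exactly the standard way this equivalence is established, and your derivation is clean: the decomposition $v = v_0 + (v,u)_H u$ with $v_0\in T_uM$ is the right tool, the key identity $\langle \varphi'(u) - \langle\varphi'(u),u\rangle u,\, v\rangle = \langle \varphi|_M'(u), v_0\rangle$ is computed correctly (using that $u$ acts on $E$ through the Riesz embedding $H\hookrightarrow E^*$ as $(u,\cdot)_H$), and both inequalities follow as you say. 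You also correctly identify that the boundedness hypothesis on $\|u_n\|_E$ is needed only for the direction (i)$\Rightarrow$(ii), to make the factor $1+c_1\|u_n\|_E$ uniform.
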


 The following two statements may be considered as counterparts of the deformation lemma and general minimax principle due to Willem {\rm\cite{WM}} in the constraint context.

 \begin{lemma}\label{lem 2.3} Let $\varphi\in \mathcal{C}^1(E,\R)$, $S\subset M$, $a\in \R$,
 $\varepsilon,\delta>0$ such that
 \begin{equation}\label{D12}
   u\in M\cap \varphi^{-1}([a-2\varepsilon,a+2\varepsilon])\cap S_{2\delta}\Rightarrow
   \left\|\varphi|_{M}'(u)\right\|\ge \f{8\varepsilon}{\delta}.
 \end{equation}
 Then, there exists $\eta\in \mathcal{C}([0,1]\times M, M)$ such that
 \begin{enumerate}[{\rm (i)}]
  \item $\eta(t,u)=u$, if $t=0$, or if $u\notin M\cap \varphi^{-1}([a-2\varepsilon,a+2\varepsilon])\cap S_{2\delta}$;
  \item $\eta\left(1,\varphi^{a+\varepsilon}\cap S\right)\subset \varphi^{a-\varepsilon}$;
  \item for every $t\in [0,1]$, $\eta(t,\cdot): M\rightarrow M$ is a homeomorphism;
  \item $\|\eta(t,u)-u\|_E\le \delta,\ \forall \ u\in M,\ t\in [0,1]$;
  \item for every $u\in M$, $\varphi(\eta(t,u))$ is non-increasing on $t\in [0,1]$;
  \item $\varphi(\eta(t,u))<a,\ \forall \ u\in M\cap \varphi^a\cap S_{\delta},\ t\in [0,1]$.
 \end{enumerate}
 \end{lemma}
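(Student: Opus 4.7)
The statement is a deformation lemma on the constraint manifold $M$, of the same flavor as Willem's classical deformation lemma in a Banach space but with the flow confined to $M$. My plan is to adapt the standard three-ingredient construction (pseudo-gradient field, Lipschitz cut-off, negative-gradient flow) to the tangent-bundle setting, relying essentially on Lemmas \ref{lem 2.1}--\ref{lem 2.2} and on the continuous embedding \eqref{EH}.

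\textbf{Setup and construction.} First I would observe that hypothesis \eqref{D12} forces the closed set
$$
A \;:=\; M\cap \varphi^{-1}([a-2\varepsilon,a+2\varepsilon])\cap S_{2\delta}
$$
to lie inside $\tilde M:=\{u\in M:\varphi|_M'(u)\neq 0\}$, because on $A$ the norm $\|\varphi|_M'(u)\|$ is bounded below by $8\varepsilon/\delta>0$. Lemma \ref{lem 2.1} then supplies a locally Lipschitz pseudo-gradient field $g:\tilde M\to T(M)$. Setting $B := M\cap \varphi^{-1}([a-\varepsilon,a+\varepsilon])\cap S_{\delta}\subset A$, I would build, via a ratio of $E$-distance functions to $M\setminus A$ and to $B$, a locally Lipschitz cut-off $\psi:M\to[0,1]$ with $\psi\equiv 1$ on $B$ and $\psi\equiv 0$ on $M\setminus A$. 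I then define
$$
V(u) \;:=\; \begin{cases} -\,\delta\,\psi(u)\,\dfrac{g(u)}{\|g(u)\|_E}, & u\in A,\\ 0, & u\in M\setminus A. \end{cases}
$$
The map $V$ is locally Lipschitz, tangent to $M$ (because $g(u)\in T_uM$), and $E$-bounded by $\delta$ on $M$. The Cauchy problem $\dot\eta=V(\eta)$, $\eta(0,\cdot)=\mathrm{id}_M$, thus has a unique solution $\eta\in\mathcal{C}([0,1]\times M,E)$; since $V(\eta)\in T_{\eta}M$ and differentiation commutes with $E\hookrightarrow H$, one gets $\frac{d}{dt}\|\eta\|_H^2=2(V(\eta),\eta)_H=0$, so $\eta(t,\cdot)$ indeed takes values in $M$.

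\textbf{Verification of (i)--(vi).} Properties (i), (iii), (iv) are immediate: $V\equiv 0$ outside $A$ gives (i); the flow is a homeomorphism in $t$ by ODE reversibility, giving (iii); and $\|\eta(t,u)-u\|_E\leq\int_0^t\|V(\eta)\|_E\,ds\leq\delta$, giving (iv). For the monotonicity (v), tangency yields
$$
\frac{d}{dt}\varphi(\eta) \;=\; \langle\varphi'(\eta),V(\eta)\rangle \;=\; \langle\varphi|_M'(\eta),V(\eta)\rangle \;\leq\; -\,\frac{\delta\psi(\eta)}{2}\,\bigl\|\varphi|_M'(\eta)\bigr\|\;\leq\;0
$$
through the two pseudo-gradient inequalities from Definition \ref{def 2.0}. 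For (ii), start with $u\in\varphi^{a+\varepsilon}\cap S$: either $\varphi(\eta(t,u))<a-\varepsilon$ for some $t\in[0,1]$ (and (v) closes the case), or $\varphi(\eta(t,u))\in[a-\varepsilon,a+\varepsilon]$ for all $t\in[0,1]$; in the second case (iv) forces $\eta(t,u)\in S_\delta$, hence $\eta(t,u)\in B$, so $\psi(\eta)\equiv 1$ and \eqref{D12} upgrades the estimate above to $\frac{d}{dt}\varphi(\eta)\leq-4\varepsilon$, which integrated over $[0,1]$ yields $\varphi(\eta(1,u))\leq a+\varepsilon-4\varepsilon<a-\varepsilon$, a contradiction. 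Property (vi) follows by splitting $u\in\varphi^a\cap S_\delta$ into $\varphi(u)<a-\varepsilon$ (handled by (v)) and $\varphi(u)\in[a-\varepsilon,a]$, in which case $u\in B$ initially, so $\frac{d}{dt}\varphi(\eta)|_{t=0}<0$ strictly and combining with (v) gives $\varphi(\eta(t,u))<a$ for all $t\in(0,1]$.

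\textbf{Main obstacle.} The only delicate point is certifying that the flow actually remains on $M$: the ODE is posed in the Banach space $E$, whereas the constraint $\|u\|_H=1$ is measured in a weaker norm, and tangency $(V(\eta),\eta)_H=0$ lives in $H$ rather than $E$. This is precisely where hypothesis \eqref{EH} is used, to transfer the $E$-valued time derivative $\dot\eta$ to an $H$-valued one and evaluate $\frac{d}{dt}\|\eta\|_H^2$. Everything else is a careful balancing of constants, so that one unit of flow time produces an energy drop of at least $4\varepsilon$ while the $E$-displacement stays bounded by $\delta$; the factor $8$ in \eqref{D12} is exactly what makes this bookkeeping work.
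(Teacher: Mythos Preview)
Your approach is essentially the paper's: pseudo-gradient field on $\tilde M$, Lipschitz cut-off between the sets $A$ and $B$, negative flow, then verify (i)--(vi) with the same energy-drop argument for (ii). The normalization you choose ($V=-\delta\,\psi\,g/\|g\|_E$ with flow time $1$) differs from the paper's ($W=-\varrho\,g/\|g\|_E^{2}$ with $\eta(t,u)=\zeta(8\varepsilon t,u)$), but the bookkeeping is equivalent and the constants match up.

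There is, however, one technical step you skip that the paper handles explicitly. You write that ``the ODE is posed in the Banach space $E$'', yet your vector field $V$ is defined only on the codimension-one submanifold $M\subset E$; the Cauchy--Lipschitz theorem in $E$ requires a locally Lipschitz field on an \emph{open} subset. The paper closes this gap by extending $W$ radially to all of $E$ via
\[
\tilde W(u):=h(\|u\|_H)\,W\!\left(\tfrac{u}{\|u\|_H}\right)
\]
for a Lipschitz bump $h$ supported in $[\tfrac12,2]$ with $h(1)=0$; the flow $\zeta$ is then solved globally in $E$, and the tangency $(u,\tilde W(u))_H=0$---now valid for the extended field---gives $\tfrac{d}{dt}\|\zeta\|_H^2=0$, which is precisely what pins the trajectory to $M$. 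Your ``main obstacle'' paragraph correctly identifies that the flow-stays-on-$M$ issue is the delicate point, but without supplying this extension (or an equivalent device) the existence of the flow itself is not justified.
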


 \begin{proof}
 By Lemma \ref{lem 2.1}, there exists a pseudo-gradient vector field $g(u)$ for $\varphi|_{M}$ on
 $\tilde{M}=\{u\in M: \varphi|'_{M}(u)\ne 0\}$, i.e. for every $u\in \tilde{M}$, there exists $g(u)\in T_uM$
 such that
 \begin{equation}\label{D15}
   \|g(u)\|_E\le 2\left\|\varphi|_{M}'(u)\right\|,\ \ \left\langle\varphi|_{M}'(u),g(u)\right\rangle
    \ge \left\|\varphi|_{M}'(u)\right\|^2.
 \end{equation}
 Let
 $$
  A:=\{u\in M: a-2\varepsilon\le \varphi(u)\le a+2\varepsilon\}\cap S_{2\delta},
 $$
 $$
  B:=\{u\in M: a-\varepsilon\le \varphi(u)\le a+\varepsilon\}\cap S_{\delta}
 $$
 and
 \begin{equation}\label{D16}
   \varrho(u):=\f{\mathrm{dist}(u,M\setminus A)}{\mathrm{dist}(u,M\setminus A)+\mathrm{dist}(u,B)}.
 \end{equation}
 Define a vector field $W$ on $M$ by
 \begin{equation}\label{Wu}
   W(u):=\begin{cases}
    -\varrho(u)\f{g(u)}{\|g(u)\|^2_E}, & \mbox{if}\ u\in A,\\
    0, & \mbox{if}\ u\in M\setminus A
  \end{cases}
 \end{equation}
 Thus, $W$ is a locally Lipschitz continuous vector field on $M$ and $W(u)\in T_uM$ for $u\in \tilde{M}$. By \eqref{D12} and \eqref{D15}, one has
 \begin{equation}\label{D17}
   \|W(u)\|_E\le \f{\delta}{8\varepsilon},\ \ \forall \ u\in M.
 \end{equation}

 \par
  As in \cite{BL2}, we let $h: \R^{+}\rightarrow \R^{+}$ be a Lipschitz continuous function such that
 $h(1)=0$, $h(s)=0$ if $s\in [0,\f{1}{2}]\cup [2,+\infty)$ and $0\le h(s)\le 1$ for $s\in [\f{1}{2},2]$.
 Define a vector field $\tilde{W}$ on $E$ by
 \begin{equation}\label{tWu}
   \tilde{W}(u):=\begin{cases}
    h(\|u\|_H)W\left(\f{u}{\|u\|_H}\right), & \mbox{if}\ \f{1}{2}\le \|u\|_H\le 2,\\
    0, & \mbox{otherwise}.
  \end{cases}
 \end{equation}
 Then $\tilde{W}(u)$ is a locally Lipschitz continuous vector field on $E$ and satisfies
 $(u,\tilde{W}(u))_H=0$ for $u\in E$.

  Now consider the Cauchy problem in $E$:
 \begin{equation}\label{fl}
  \begin{cases}
   \f{\mathrm{d}}{\mathrm{d}t}\zeta(t,u)=\tilde{W}(\zeta(t,u)),\\
    \zeta(0,u)=u.
  \end{cases}
 \end{equation}
 By a standard arguments, for all $u\in E$, equation \eqref{fl} has a unique solution $\zeta(t,u)\in E$, defined for all $t\in \R$.  It follows from \eqref{D16}, \eqref{Wu}, \eqref{tWu} and \eqref{fl} that
 \begin{equation}\label{D20}
   \f{\mathrm{d}}{\mathrm{d}t}\|\zeta(t,u)\|_H^2=2\left(\zeta(t,u),\f{\mathrm{d}}{\mathrm{d}t}\zeta(t,u)\right)_H
    =2(\zeta(t,u),\tilde{W}(\zeta(t,u)))_H=0,\ \ \forall \ u\in E, \ t\in \R.
 \end{equation}
 This shows that $\|\zeta(t,u)\|_H=\|u\|_H=1$ for $u\in M$ and $t\in \R$. Thus
 \begin{equation}\label{D22}
   \zeta(t,u)\in M,\ \ \forall \ u\in M, \ t\in \R.
 \end{equation}
By \eqref{D15}, \eqref{D16}, \eqref{D17}, \eqref{fl} and \eqref{D22}, we have
 \begin{eqnarray}\label{D24}
   \|\zeta(t,u)-u\|_E
   &  =  & \left\|\int_{0}^{t}W(\zeta(s,u))\mathrm{d}s\right\|_E\nonumber\\
   & \le & \int_{0}^{t}\|W(\zeta(s,u))\|_E\mathrm{d}s\le \f{\delta t}{8\varepsilon},
           \ \ \forall \ u\in M, \ t\ge 0
 \end{eqnarray}
 and
 \begin{eqnarray}\label{D26}
   \f{\mathrm{d}}{\mathrm{d}t}\varphi(\zeta(t,u))
   &  =  & \left\langle\varphi'(\zeta(t,u)),\f{\mathrm{d}}{\mathrm{d}t}\zeta(t,u)\right\rangle\nonumber\\
   &  =  & \langle\varphi'(\zeta(t,u)),W(\zeta(t,u))\rangle\nonumber\\
   &  =  & -\f{\varrho(\zeta(t,u))}{\|g(\zeta(t,u))\|_E^2}\langle\varphi'(\zeta(t,u)),g(\zeta(t,u))\rangle\nonumber\\
   & \le & -\f{1}{4}\varrho(\zeta(t,u)),\ \ \forall \ u\in M, \ t> 0.
 \end{eqnarray}
 Set $\eta(t,u):=\zeta(8\varepsilon t,u)$. From the continuous dependence on $u$ in \eqref{fl} and \eqref{D22},
 it follows that, for all $t\in [0,1]$, $\eta(t,\cdot)$ is a homeomorphism: $M\rightarrow M$, i.e. (iii) holds.
 By \eqref{D16}, \eqref{Wu}, \eqref{D22}, \eqref{D24} and \eqref{D26}, it is easy to verify that
 (i), (iv)-(vi) hold.

 \par
   Finally, we prove (ii) holds also. Let $u\in \varphi^{a+\varepsilon}\cap S$. If there is
 $t_0\in [0,8\varepsilon]$ such that $\varphi(\zeta(t_0,u))<a-\varepsilon$, then it follows from \eqref{D26} that $\varphi(\zeta(8\varepsilon,u))<a-\varepsilon$ and (ii) is satisfied. If
 \begin{equation}\label{D30}
   a-\varepsilon\le \varphi(\zeta(t,u))\le a+\varepsilon,\ \ \forall \ t\in [0,8\varepsilon],
 \end{equation}
 then \eqref{D22} and \eqref{D24} implies that $\zeta(t,u)\in M\cap S_{\delta}$ for $t\in [0,8\varepsilon]$, and so
 $\zeta(t,u)\in B$ for $t\in [0,8\varepsilon]$, it follows from  \eqref{D26}  and \eqref{D30} that
 \begin{eqnarray*}\label{D32}
   \varphi(\zeta(8\varepsilon,u))
   &  =  & \varphi(u)+\int_{0}^{8\varepsilon}\f{\mathrm{d}}{\mathrm{d}t}\varphi(\zeta(t,u))\mathrm{d}t\nonumber\\
   & \le & \varphi(u)-\f{1}{4}\int_{0}^{8\varepsilon}\varrho(\zeta(t,u))\mathrm{d}t\nonumber\\
   & \le & a+\varepsilon-2\varepsilon=a-\varepsilon,
 \end{eqnarray*}
 and (ii) is also satisfied.

 \end{proof}

 \begin{theorem}\label{thm 2.4}
  Assume that $\theta\in \R$, $\varphi\in \mathcal{C}^1(E,\R)$ and $\Upsilon\subset M$ is a closed set. Let
 \begin{equation}\label{Ga0}
   \Gamma:=\left\{\gamma\in \mathcal{C}([0,1], M): \gamma(0)\in \Upsilon, \varphi(\gamma(1))
    <\theta\right\}.
 \end{equation}
 If $\varphi$ satisfies
 \begin{equation}\label{D40}
   a:=\inf_{\gamma\in \Gamma}\max_{t\in [0, 1]}\varphi(\gamma(t))
     >b:=\sup_{\gamma\in \Gamma}\max\left\{\varphi(\gamma(0)), \varphi(\gamma(1))\right\},
 \end{equation}
 then, for every $\varepsilon\in (0,(a-b)/2)$, $\delta>0$ and $\hat{\gamma}\in \Gamma$ such that
 \begin{equation}\label{D42}
   \sup_{t\in [0, 1]}\varphi(\hat{\gamma}(t))\le a+\varepsilon,
 \end{equation}
 there exists $u\in M$ such that
 \begin{enumerate}[{\rm (i)}]
  \item $a-2\varepsilon\le \varphi(u)\le a+2\varepsilon$;
  \item $\min_{t\in [0,1]}\|u-\hat{\gamma}(t)\|_E\le 2\delta$;
  \item $\left\|\varphi|_{M}'(u)\right\|\le \f{8\varepsilon}{\delta}$.
 \end{enumerate}
 \end{theorem}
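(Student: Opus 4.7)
The plan is to argue by contradiction, exploiting the deformation lemma \ref{lem 2.3} to convert the given almost-optimal path $\hat\gamma$ into a new one whose maximum of $\varphi$ drops strictly below $a$, contradicting the definition of $a$ as the infimum over $\Gamma$.

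Suppose, toward a contradiction, that no $u \in M$ satisfies (i)--(iii) simultaneously. Setting $S:=\hat\gamma([0,1])$, any $u$ meeting (i) and (ii) must fail (iii), so
\begin{equation*}
u\in M\cap \varphi^{-1}([a-2\varepsilon,a+2\varepsilon])\cap S_{2\delta} \ \Longrightarrow\ \left\|\varphi|_{M}'(u)\right\|>\f{8\varepsilon}{\delta}.
\end{equation*}
This is exactly hypothesis \eqref{D12}, so Lemma \ref{lem 2.3} yields a deformation $\eta\in \mathcal{C}([0,1]\times M,M)$ enjoying properties (i)--(vi) listed there.

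Now set $\tilde\gamma(t):=\eta(1,\hat\gamma(t))$, which is a continuous map from $[0,1]$ into $M$. The hypothesis $\varepsilon<(a-b)/2$ combined with $\varphi(\hat\gamma(0))\le b$ and $\varphi(\hat\gamma(1))\le b$ yields $\varphi(\hat\gamma(0)),\varphi(\hat\gamma(1))< a-2\varepsilon$, so neither endpoint lies in $\varphi^{-1}([a-2\varepsilon,a+2\varepsilon])$. Property (i) of $\eta$ then fixes both endpoints: $\tilde\gamma(0)=\hat\gamma(0)\in \Upsilon$ and $\varphi(\tilde\gamma(1))=\varphi(\hat\gamma(1))<\theta$, hence $\tilde\gamma\in \Gamma$. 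On the other hand, \eqref{D42} forces $\hat\gamma(t)\in \varphi^{a+\varepsilon}\cap S$ for every $t\in [0,1]$, so property (ii) of $\eta$ gives $\varphi(\tilde\gamma(t))\le a-\varepsilon$ uniformly in $t$. Consequently $\max_{t\in[0,1]}\varphi(\tilde\gamma(t))\le a-\varepsilon<a$, a direct contradiction to $a=\inf_{\Gamma}\max\varphi$.

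The entire analytic difficulty has been absorbed into Lemma \ref{lem 2.3}, which supplies the pseudo-gradient flow on $M$ whose support is confined to the tubular neighborhood $S_{2\delta}$ of the distinguished path; beyond that, the only subtlety in the present argument is ensuring that the two endpoints of $\hat\gamma$ are shielded from the deformation, which is guaranteed precisely by the gap $a-b>2\varepsilon$ together with the fact that $\varphi(\gamma(1))<\theta$ forces $\varphi(\hat\gamma(1))\le b$.
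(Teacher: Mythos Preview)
Your proof is correct and follows essentially the same route as the paper's: argue by contradiction, set $S=\hat\gamma([0,1])$, invoke Lemma~\ref{lem 2.3}, and show that the deformed path $\tilde\gamma=\eta(1,\hat\gamma(\cdot))$ stays in $\Gamma$ (because the gap $a-2\varepsilon>b$ keeps both endpoints outside the deformation region) while its maximum drops to at most $a-\varepsilon$. Your write-up is slightly more explicit than the paper's in justifying why the endpoints are fixed and why hypothesis \eqref{D12} holds, but the argument is identical in substance.
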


 \begin{proof}
 Since $\varepsilon\in (0,(a-b)/2)$, so we have
 \begin{equation}\label{D45}
   a-2\varepsilon>b.
 \end{equation}
 Suppose the conclusions are false. We apply Lemma \ref{lem 2.3} with $S=\hat{\gamma}([0,1])$. Then, there exists
 $\eta\in \mathcal{C}([0,1]\times M, M)$ such that (i)-(vi) in Lemma \ref{lem 2.3} hold.
 We define $\tilde{\gamma}(t)=\eta(1,\hat{\gamma}(t))$. Then it follows from \eqref{D40}, \eqref{D45} and
 (i) in Lemma \ref{lem 2.3} that
 $$
   \tilde{\gamma}(0)=\eta(1,\hat{\gamma}(0))=\hat{\gamma}(0),\ \ \tilde{\gamma}(1)=\eta(1,\hat{\gamma}(1))
   =\hat{\gamma}(1),
 $$
 so $\tilde{\gamma}\in \Gamma$. By \eqref{D42} and (ii) in Lemma \ref{lem 2.3}, one has
 $$
   \sup_{t\in [0,1]}\varphi(\tilde{\gamma}(t))=\sup_{t\in [0,1]}\varphi(\eta(1,\hat{\gamma}(t)))
     \le a-\varepsilon,
 $$
 contradicting the definition of $a$.
 \end{proof}

 As a consequence of Theorem \ref{thm 2.4}, we have:

 \begin{corollary}\label{cor 2.5}
  Assume that $\theta\in \R$, $\Upsilon\subset M$ and $\varphi\in \mathcal{C}^1(E,\R)$ satisfy \eqref{Ga0} and \eqref{D40}. Then there exists a sequence $\{u_n\}\subset M$ satisfying
 \begin{equation}\label{PS}
   \varphi(u_n)\rightarrow a,\ \ \left\|\varphi|_M'(u_n)\right\|\rightarrow 0.
 \end{equation}
 \end{corollary}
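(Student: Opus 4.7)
The plan is to derive this as a direct consequence of Theorem \ref{thm 2.4} by choosing sequences $\varepsilon_n\downarrow 0$ and $\delta_n>0$ adjusted so that the ratio $\varepsilon_n/\delta_n$ also tends to zero, while still being able to exhibit admissible paths $\hat\gamma_n\in\Gamma$ that almost attain the minimax level $a$.

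First, I would fix $n_0\in\N$ large enough so that $1/n_0^2<(a-b)/2$ (which is permissible because (\ref{D40}) guarantees $a>b$), and for every $n\ge n_0$ I would set $\varepsilon_n:=1/n^2$ and $\delta_n:=1/n$; then $\varepsilon_n\in(0,(a-b)/2)$ and $8\varepsilon_n/\delta_n=8/n\to 0$. Next, using the very definition of $a$ as an infimum over $\Gamma$, I would pick, for each such $n$, a path $\hat\gamma_n\in\Gamma$ satisfying
\begin{equation*}
\sup_{t\in[0,1]}\varphi(\hat\gamma_n(t))\le a+\varepsilon_n,
\end{equation*}
which is exactly the hypothesis (\ref{D42}) of Theorem \ref{thm 2.4}.

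Applying Theorem \ref{thm 2.4} with these data produces, for every $n\ge n_0$, a point $u_n\in M$ with
\begin{equation*}
a-\tfrac{2}{n^2}\le\varphi(u_n)\le a+\tfrac{2}{n^2},\qquad \bigl\|\varphi|_M'(u_n)\bigr\|\le \tfrac{8}{n}.
\end{equation*}
Letting $n\to\infty$ immediately yields $\varphi(u_n)\to a$ and $\|\varphi|_M'(u_n)\|\to 0$, establishing (\ref{PS}). The location condition (ii) of Theorem \ref{thm 2.4} plays no role here but would be useful in refined applications; for the bare existence of a Palais--Smale sequence at level $a$ it is enough to drop it.

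There is essentially no obstacle: the only point requiring a small amount of care is the compatibility condition $\varepsilon_n<(a-b)/2$, which forces the cut-off at index $n_0$, and the coupled choice of $\varepsilon_n$ and $\delta_n$ so that both $\varepsilon_n\to 0$ and $\varepsilon_n/\delta_n\to 0$; any schedule with these two properties works, and $\varepsilon_n=1/n^2$, $\delta_n=1/n$ is the simplest. All of the deformation-theoretic work has already been absorbed into Theorem \ref{thm 2.4} via Lemma \ref{lem 2.3}, so the corollary reduces to this bookkeeping.
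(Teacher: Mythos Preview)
Your argument is correct and is precisely the intended derivation: the paper itself presents Corollary~\ref{cor 2.5} without proof, simply as ``a consequence of Theorem~\ref{thm 2.4}'', and the analogous Corollary~\ref{cor 2.15} makes the implicit choice $\varepsilon_n=1/n$, $\delta_n=1/\sqrt{n}$ explicit. Your schedule $\varepsilon_n=1/n^2$, $\delta_n=1/n$ works just as well, and your handling of the compatibility condition $\varepsilon_n<(a-b)/2$ via a cut-off index $n_0$ is the only detail that needed spelling out.
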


 \begin{theorem}\label{thm 2.5}
 Let $\varphi\in \mathcal{C}^1(E,\R)$ and $A\subset E$. If there exists $\rho>0$ such that
 \begin{equation}\label{D50}
   a:=\inf_{v\in M\cap A}\varphi(v)
     <b:=\inf_{v\in M\cap (A_{\rho}\setminus A)}\varphi(v),
 \end{equation}
 then, for every $\varepsilon\in (0,(b-a)/2)$, $\delta\in (0,\rho/2)$ and $w\in M\cap A$ such that
 \begin{equation}\label{D52}
   \varphi(w)\le a+\varepsilon,
 \end{equation}
 there exists $u\in M$ such that
 \begin{enumerate}[{\rm (i)}]
  \item $a-2\varepsilon\le \varphi(u)\le a+2\varepsilon$;
  \item $\|u-w\|_E\le 2\delta$;
  \item $\left\|\varphi|_{M}'(u)\right\|\le 8\varepsilon/\delta$.
 \end{enumerate}
 \end{theorem}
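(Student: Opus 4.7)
The plan is to apply the deformation Lemma \ref{lem 2.3} with the singleton $S := \{w\}$ and extract a contradiction from the strict gap between $a$ on $M\cap A$ and $b$ on the annular set $M\cap(A_\rho\setminus A)$. No minimax path or family $\Gamma$ is needed here; this statement is really a local \emph{minimum}-type counterpart of Theorem \ref{thm 2.4}, obtained by deforming one almost-minimizing point.

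First I argue by contradiction: suppose that no $u \in M$ satisfies (i), (ii), (iii) simultaneously. Then every $u \in M$ with $a-2\varepsilon \le \varphi(u) \le a+2\varepsilon$ and $\|u-w\| \le 2\delta$ must satisfy $\|\varphi|_M'(u)\| > 8\varepsilon/\delta$. With $S=\{w\}$ one has $S_{2\delta}=\{v\in E:\|v-w\|<2\delta\}$, so this is exactly hypothesis \eqref{D12} of Lemma \ref{lem 2.3}. The lemma then produces a deformation $\eta\in\mathcal{C}([0,1]\times M, M)$ satisfying (i)--(vi) there.

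Next I track the orbit of the single point $w$. Since $\varphi(w)\le a+\varepsilon$ by \eqref{D52}, i.e.\ $w\in \varphi^{a+\varepsilon}\cap S$, conclusion (ii) of Lemma \ref{lem 2.3} yields
$$
\varphi(\eta(1,w)) \le a - \varepsilon.
$$
Conclusion (iv) gives $\|\eta(1,w)-w\|\le \delta < \rho/2 < \rho$, and since $w\in A$ this forces $\eta(1,w)\in A_\rho$. Moreover $\eta(1,w)\in M$ because $\eta$ maps into $M$. Now comes the dichotomy that produces the contradiction: either $\eta(1,w)\in M\cap A$, in which case the defining infimum gives $\varphi(\eta(1,w))\ge a$, contradicting $\varphi(\eta(1,w))\le a-\varepsilon$; or $\eta(1,w)\in M\cap(A_\rho\setminus A)$, in which case the defining infimum gives $\varphi(\eta(1,w))\ge b$, contradicting $\varphi(\eta(1,w))\le a-\varepsilon < b$ (since $\varepsilon<(b-a)/2$ guarantees $a-\varepsilon<(a+b)/2<b$). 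Either branch is impossible, so the original assumption fails and a point $u$ with properties (i)--(iii) must exist.

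The only delicate piece of bookkeeping is the choice $\delta<\rho/2$, which is precisely what ensures that after the deformation step (of size $\le\delta$) the image $\eta(1,w)$ still sits inside the enlarged set $A_\rho$, so that the ``moat'' where $\varphi\ge b$ can be invoked. I do not foresee a substantial obstacle: once Lemma \ref{lem 2.3} is in place, the argument is a direct transcription of the classical local deformation trick (used, for instance, in Ekeland/Brezis--Nirenberg style proofs of mountain-pass-below-infimum results) to the constraint manifold $M$.
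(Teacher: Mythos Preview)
Your proof is correct and follows essentially the same approach as the paper: argue by contradiction, apply Lemma \ref{lem 2.3} with $S=\{w\}$, then use the dichotomy on whether $\eta(1,w)$ lies in $A$ or in the annular set to reach a contradiction. The only cosmetic difference is that the paper, in the branch $\eta(1,w)\in A_\delta\setminus A$, invokes the monotonicity property (v) of Lemma \ref{lem 2.3} to get $\varphi(\eta(1,w))\le\varphi(w)\le a+\varepsilon$ and then contradicts $a+2\varepsilon<b$, whereas you use the stronger conclusion (ii) uniformly in both branches; your version is slightly cleaner but not substantively different.
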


 \begin{proof}
 Since $\varepsilon\in (0,(b-a)/2)$, which implies
 \begin{equation}\label{D55}
   a+2\varepsilon<b.
 \end{equation}
 Suppose the thesis is false. We apply Lemma \ref{lem 2.3} with $S=\{w\}$,
 there exists $\eta\in \mathcal{C}([0,1]\times M, M)$ such that the conclusions (i)-(vi) in Lemma
 \ref{lem 2.3} hold. Set $\tilde{v}=\eta(1,w)$. Then (iv) in Lemma \ref{lem 2.3} implies that $\tilde{v}\in S_{\delta}\subset A_{\delta}$.
 If $\tilde{v}\in A_{\delta}\setminus A$, then it follows from (i), (v) in Lemma \ref{lem 2.3} and
 \eqref{D55} that
 $$
   a+\varepsilon\ge \varphi(w)=\varphi(\eta(0,w))\ge \varphi(\eta(1,w))=\varphi(\tilde{v})\ge b>a+2\varepsilon,
 $$
 which is absurd. If $\tilde{v}\in A$, then both \eqref{D52} and (ii) in Lemma \ref{lem 2.3} yield
 $$
   \varphi(\tilde{v})=\varphi(\eta(1,w))\le a-\varepsilon,
 $$
 contradicting the definition of $a$.
 \end{proof}

 As direct corollaries of Theorem \ref{thm 2.5}, we have:
 \begin{corollary}\label{cor 2.6}
  Let $\varphi\in \mathcal{C}^1(E,\R)$ and $A\subset E$ and let $\{w_n\}\subset M\cap A$ be a minimizing
 sequence for $\inf_{M\cap A}\varphi$. If there exists $\rho>0$ such that \eqref{D50} holds,
 then there exists a sequence $\{u_n\}\subset M$ satisfying
 \begin{equation}\label{PS}
   \varphi(u_n)\rightarrow a, \ \ \|u_n-w_n\|_E\rightarrow 0, \ \ \left\|\varphi|_M'(u_n)\right\|\rightarrow 0.
 \end{equation}
 \end{corollary}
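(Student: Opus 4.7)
The plan is to derive this as a direct application of Theorem~\ref{thm 2.5}, applied iteratively to the given minimizing sequence $\{w_n\}$ with parameters $\varepsilon_n,\delta_n \to 0^+$ chosen at carefully calibrated rates. Since no new analytic machinery is needed beyond what Theorem~\ref{thm 2.5} already provides, the whole proof reduces to a parameter-selection exercise.

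Because $\{w_n\} \subset M \cap A$ is minimizing for $\inf_{M\cap A}\varphi$, one has $\varphi(w_n) \to a$. The first step is to introduce $\varepsilon_n := \max\{\varphi(w_n)-a,\ 1/n\}$, which is positive, tends to $0$, and automatically satisfies $\varphi(w_n) \le a+\varepsilon_n$; this is exactly hypothesis \eqref{D52} applied at $w = w_n$. For all $n$ large enough, $\varepsilon_n < (b-a)/2$, so the first parameter constraint of Theorem~\ref{thm 2.5} is met.

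The second step is to set $\delta_n := \sqrt{\varepsilon_n}$, so that $\delta_n \to 0^+$ (with $\delta_n < \rho/2$ eventually) while crucially the ratio $\varepsilon_n/\delta_n = \sqrt{\varepsilon_n}$ also tends to $0$. Applying Theorem~\ref{thm 2.5} at $w = w_n$ with $\varepsilon = \varepsilon_n$ and $\delta = \delta_n$ produces, for all sufficiently large $n$, a point $u_n \in M$ with $|\varphi(u_n) - a| \le 2\varepsilon_n$, $\|u_n - w_n\| \le 2\delta_n$, and $\|\varphi|_M'(u_n)\| \le 8\varepsilon_n/\delta_n = 8\sqrt{\varepsilon_n}$. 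All three right-hand sides vanish as $n \to \infty$, which yields the three convergences claimed in \eqref{PS}. For the finitely many indices where either $\varepsilon_n \ge (b-a)/2$ or $\delta_n \ge \rho/2$, I would simply put $u_n := w_n$; this does not affect any limit.

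Since this is essentially bookkeeping on top of Theorem~\ref{thm 2.5}, there is no serious obstacle. The only point requiring attention is that the three desired convergences are controlled by $\varepsilon_n$, $\delta_n$, and $\varepsilon_n/\delta_n$ respectively, so one must choose $\varepsilon_n, \delta_n \to 0$ \emph{and} $\varepsilon_n/\delta_n \to 0$ simultaneously; the geometric-mean choice $\delta_n = \sqrt{\varepsilon_n}$ does this in the simplest way, and any $\delta_n = \varepsilon_n^{\theta}$ with $\theta \in (0,1)$ would work equally well.
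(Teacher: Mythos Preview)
Your proof is correct and follows exactly the approach the paper intends: the paper states Corollary~\ref{cor 2.6} ``as a direct corollary of Theorem~\ref{thm 2.5}'' without giving details, and your parameter choices $\varepsilon_n=\max\{\varphi(w_n)-a,1/n\}$ and $\delta_n=\sqrt{\varepsilon_n}$ supply precisely the bookkeeping needed to turn that one-line remark into a rigorous argument.
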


 \begin{corollary}\label{cor 2.20}
 Let $\varphi\in \mathcal{C}^1(E,\R)$ and $A\subset E$. If there exist $\rho>0$ and $\bar{u}\in M\cap A$ such that
 \begin{equation}\label{D50+}
   \varphi(\bar{u})=\inf_{v\in M\cap A}\varphi(v)
     <\inf_{v\in M\cap (A_{\rho}\setminus A)}\varphi(v),
 \end{equation}
 then $\varphi|_{M}'(\bar{u})=0$.
 \end{corollary}

 \begin{proof}
 Let $a:=\inf_{v\in M\cap A}\varphi(v)$ and $b:=\inf_{v\in M\cap (A_{\rho}\setminus A)}\varphi(v)$. Suppose the above conclusion is false,
 then there exists $\varepsilon_0\in (0,(b-a)/2)$ such that $\|\varphi|_{M}'(\bar{u})\|\ge \varepsilon_0$.
 Since $\varphi\in \mathcal{C}^1(E,\R)$, then there exists $\delta_0\in (0,\rho/2)$ such that
 \begin{equation}\label{D60}
   u\in M, \ \ \|u-\bar{u}\|_E<\delta_0 \ \Rightarrow \ \|\varphi|_{M}'(u)\|\ge \frac{\varepsilon_0}{2}.
 \end{equation}
 By Theorem \ref{thm 2.5}, for $\varepsilon=\frac{\varepsilon_0}{n}$, $\delta=\frac{\delta_0}{4}$ and $\bar{u}\in M\cap A$ such that
 \begin{equation*}\label{D62}
   \varphi(\bar{u})\le a+\frac{\varepsilon_0}{n},
 \end{equation*}
 there exists $u\in M$ such that for all $n\in \N$,
 \par
  i) $a-\frac{2\varepsilon_0}{n}\le \varphi(u)\le a+\frac{2\varepsilon_0}{n}$; \ \ ii) $\|u-\bar{u}\|_E\le \frac{\delta_0}{2}$; \ \
  iii) $\left\|\varphi|_{M}'(u)\right\|\le \frac{32\varepsilon_0}{n\delta_0}$.
 \par\noindent
 These contradict with \eqref{D60} for large $n\in \N$.

 \end{proof}

\begin{remark}\label{rem2.1} As one will see in the next section, although a {\rm(PS)} sequence $\{u_n\}$ of $\Phi\big|_{\mathcal{S}_c}$
can be generated by applying Corollary {\rm\ref{cor 2.5}}, it not sufficient to derive the boundedness of $\{u_n\}$ in $H^1(\R^N)$.
In the spirit of Jeanjean {\rm\cite{Je-NA}}, to do that, we hope to find  a {\rm(PS)}  sequence having additional property $\mathcal{P}(u_n)\to 0$.
Different from that of {\rm\cite{Je-NA}}, we will employ critical point theorems for the $\mathcal{C}^1$-functional
 $\tilde{\Phi}(u,t)=\Phi(e^{Nt/2}u(e^tx))$ for $(u,t)\in H^1(\R^N)\times \R$, which will be given in the following.
 \end{remark}

 \par
  Let $E\times\R$ be equipped with the scalar product
 $$
   ((u,\tau),(v,\sigma))_{E\times\R}:=(u,v)_{H}+\tau\sigma,\ \ \forall \ (u,\tau), (v,\sigma)\in E\times \R,
 $$
 and corresponding norm
 $$
   \|(u,\tau)\|_{E\times\R}:=\sqrt{\|u\|^2_{H}+\tau^2},\ \ \forall \ (u,\tau)\in E\times \R.
 $$
  Next, we consider a functional $\tilde{\varphi}: E\times\R\rightarrow \R$ which is of class $\mathcal{C}^1$
 on $E\times\R$. We denote by $\tilde{\varphi}|_{M\times\R}$ the trace of $\tilde{\varphi}$ on $M\times\R$.
 Then $\tilde{\varphi}|_{M\times\R}$  is a $\mathcal{C}^1$ functional on $M\times\R$, and for any $(u,\tau)
 \in M\times\R$,
 \begin{equation}\label{DMM}
   \langle\tilde{\varphi}|_{M\times\R}'(u,\tau), (v,\sigma)\rangle
     :=\langle\tilde{\varphi}'(u,\tau), (v,\sigma)\rangle,\ \ \forall \ (v,\sigma)\in \tilde{T}_{(u,\tau)}(M\times\R),
 \end{equation}
 where
 \begin{equation}\label{TMM}
   \tilde{T}_{(u,\tau)}(M\times\R):=\{(v,\sigma)\in E\times\R: (u,v)_H=0\}.
 \end{equation}
 In the sequel, for any $(u,\tau)\in M\times\R$, we define the norm $\left\|\tilde{\varphi}|_{M\times\R}'(u,\tau)\right\|$ by
 \begin{equation}\label{DMN}
   \left\|\tilde{\varphi}|_{M\times\R}'(u,\tau)\right\|=\sup_{(v,\sigma)\in \tilde{T}_{(u,\tau)}(M\times\R),\|(v,\sigma)\|_{E\times\R}=1}|\langle\tilde{\varphi}'(u,\tau),
    (v,\sigma)\rangle|.
 \end{equation}

 \begin{definition}\label{def 2.10} {\rm\cite{BL2}} A pseudo-gradient vector for $\tilde{\varphi}|_{M\times\R}$ at $(u,\tau)\in \widetilde{M\times\R}:=\{(u,\tau)\in M\times\R: \tilde{\varphi}|_{M\times\R}'(u,\tau)\ne 0\}$ is
 a vector $(v,\sigma)\in \tilde{T}_{(u,\tau)}(M\times\R)$ such that
 $$
   \|(v,\sigma)\|_{E\times\R}\le 2\left\|\tilde{\varphi}|_{M\times\R}'(u,\tau)\right\|,\ \ \langle\tilde{\varphi}|_{M\times\R}'(u,\tau),(v,\sigma)\rangle
    \ge \left\|\tilde{\varphi}|_{M\times\R}'(u,\tau)\right\|^2.
 $$
 Put
 $$
  \tilde{T}(M\times\R):=\bigcup_{(u,\tau)\in M\times\R}(\tilde{T}_{(u,\tau)}(M\times\R)).
 $$
 A mapping (lifting) $g: \widetilde{M\times\R}\rightarrow \tilde{T}(M\times\R)$ is called a pseudo-gradient vector field for $\tilde{\varphi}|_{M\times\R}$ on $\widetilde{M\times\R}$ if $g$ is locally Lipschitz continuous,
 $g(u,\tau)\in \tilde{T}_{(u,\tau)}(M\times\R)$ for $(u,\tau)\in \widetilde{M\times\R}$, and $g(u,\tau)$ is a pseudo-gradient vector for $\tilde{\varphi}|_{M\times\R}$ at any $(u,\tau)\in \widetilde{M\times\R}$.
 \end{definition}

 \begin{lemma}\label{lem 2.11} {\rm\cite{BL2}} If $\tilde{\varphi}|_{M\times\R}\in \mathcal{C}^1(M\times\R,\R)$,
 then there exists a pseudo-gradient vector field.
 \end{lemma}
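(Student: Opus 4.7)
The plan is to imitate the classical construction of a pseudo-gradient vector field on a Banach manifold (as in Willem \cite{WM} or Benci-Lions), adapted to the product manifold $M\times \R$ with tangent bundle $\tilde{T}(M\times \R)$ as described above. The ingredients are: (a) a pointwise construction of a pseudo-gradient vector near each regular point; (b) a continuous (in fact Lipschitz) projection onto the tangent space, which allows transporting a local vector to the tangent spaces of nearby points; and (c) a locally Lipschitz partition of unity on the open set $\widetilde{M\times \R}$, which inherits a metric and is therefore paracompact.

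First I would carry out the pointwise step. Fix $(u_0,\tau_0)\in\widetilde{M\times \R}$ and set $\alpha:=\|\tilde{\varphi}|_{M\times \R}'(u_0,\tau_0)\|>0$. By definition of the dual norm in \eqref{DMN}, I can choose $(\bar v,\bar\sigma)\in \tilde{T}_{(u_0,\tau_0)}(M\times \R)$ with $\|(\bar v,\bar\sigma)\|_{E\times \R}=1$ and $\langle\tilde{\varphi}'(u_0,\tau_0),(\bar v,\bar\sigma)\rangle>\tfrac{2}{3}\alpha$. Setting $w_0:=\tfrac{3\alpha}{2}(\bar v,\bar\sigma)$ yields $\|w_0\|_{E\times\R}<2\alpha$ and $\langle\tilde{\varphi}'(u_0,\tau_0),w_0\rangle>\alpha^2$, with strict inequalities.

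Next I would address transport to nearby points. The naive issue is that $w_0\notin \tilde{T}_{(u,\tau)}(M\times \R)$ for $(u,\tau)\neq(u_0,\tau_0)$. I would introduce the projection $P_{(u,\tau)}(v,\sigma):=(v-(u,v)_H u,\sigma)$, which maps $E\times \R$ onto $\tilde{T}_{(u,\tau)}(M\times \R)$ and depends Lipschitz continuously on $(u,\tau)\in M\times \R$ (thanks to the continuous injection $E\hookrightarrow H$ in \eqref{EH} and $\|u\|_H=1$). Since $\tilde{\varphi}'$ is continuous and since both inequalities above are strict at $(u_0,\tau_0)$, a continuity argument furnishes an open neighborhood $U_{(u_0,\tau_0)}\subset \widetilde{M\times \R}$ on which
\begin{equation*}
\|P_{(u,\tau)}w_0\|_{E\times \R}\le 2\bigl\|\tilde{\varphi}|_{M\times \R}'(u,\tau)\bigr\|,\qquad \bigl\langle\tilde{\varphi}'(u,\tau),P_{(u,\tau)}w_0\bigr\rangle\ge \bigl\|\tilde{\varphi}|_{M\times \R}'(u,\tau)\bigr\|^2.
\end{equation*}

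Finally I would globalize. The open set $\widetilde{M\times \R}$, equipped with the metric inherited from $E\times \R$, is paracompact, so one can extract a locally finite refinement $\{V_i\}_{i\in I}$ of the covering $\{U_{(u_0,\tau_0)}\}$, together with a locally Lipschitz subordinate partition of unity $\{\rho_i\}_{i\in I}$. Denoting by $w_i$ the pseudo-gradient vector chosen for $V_i$, I would define
\begin{equation*}
g(u,\tau):=\sum_{i\in I}\rho_i(u,\tau)\,P_{(u,\tau)}w_i,\qquad (u,\tau)\in \widetilde{M\times \R}.
\end{equation*}
Local finiteness plus Lipschitz continuity of each $\rho_i$ and of $(u,\tau)\mapsto P_{(u,\tau)}$ give that $g$ is locally Lipschitz; $g(u,\tau)\in \tilde{T}_{(u,\tau)}(M\times \R)$ by construction; and the pseudo-gradient inequalities for $g$ follow by taking convex combinations of the inequalities of Step 2, since the dual-norm and linear bounds are preserved under convex combinations.

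The delicate step is Step 2: one must verify that the projection of a pointwise pseudo-gradient remains a pseudo-gradient on an entire open neighborhood. Strict inequalities at $(u_0,\tau_0)$ are essential, and the Lipschitz behavior of $(u,\tau)\mapsto P_{(u,\tau)}$ (which rests on $u$ lying on the unit sphere of $H$ and on \eqref{EH}) is what ensures the estimate survives gluing via partition of unity. Once this is secured, the rest of the argument is a routine adaptation of Lemma \ref{lem 2.1}.
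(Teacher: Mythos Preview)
The paper does not supply a proof of this lemma; it is simply quoted from \cite{BL2} (Berestycki--Lions), so there is no argument in the paper to compare against. Your construction is the standard one---pointwise near-optimal tangent vector, Lipschitz projection $P_{(u,\tau)}(v,\sigma)=(v-(u,v)_H u,\sigma)$ onto the moving tangent space, and gluing by a locally Lipschitz partition of unity on the paracompact set $\widetilde{M\times\R}$---and it is correct; the only point worth spelling out more carefully in a final write-up is the continuity of $(u,\tau)\mapsto \|\tilde\varphi|_{M\times\R}'(u,\tau)\|$, which follows from the same projection device since the tangent spaces vary Lipschitz-continuously with $u$.
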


 \begin{lemma}\label{lem 2.13} Let $\tilde{\varphi}\in \mathcal{C}^1(E\times\R,\R)$, $S\subset M\times\R$,
 $a\in \R$, $\varepsilon,\delta>0$ such that
 \begin{equation}\label{F12}
   (u,\tau)\in (M\times\R)\cap \tilde{\varphi}^{-1}([a-2\varepsilon,a+2\varepsilon])\cap S_{2\delta}\Rightarrow
   \left\|\tilde{\varphi}|_{M\times\R}'(u,\tau)\right\|\ge \f{8\varepsilon}{\delta}.
 \end{equation}
 Then, there exists $\eta\in \mathcal{C}([0,1]\times (M\times\R), M\times\R)$ such that
 \begin{enumerate}[{\rm (i)}]
  \item $\eta(t,(u,\tau))=(u,\tau)$, if $t=0$, or if $(u,\tau)\notin (M\times\R)\cap \tilde{\varphi}^{-1}([a-2\varepsilon,a+2\varepsilon])
  \cap S_{2\delta}$;
  \item $\eta\left(1,\tilde{\varphi}^{a+\varepsilon}\cap S\right)\subset \tilde{\varphi}^{a-\varepsilon}$;
  \item for every $t\in [0,1]$, $\eta(t,\cdot): M\times\R\rightarrow M\times\R$ is a homeomorphism;
  \item $\|\eta(t,(u,\tau))-(u,\tau)\|_{E\times\R}\le \delta,\ \forall \ (u,\tau)\in M\times\R,\ t\in [0,1]$;
  \item for every $(u,\tau)\in M\times\R$, $\tilde{\varphi}(\eta(t,(u,\tau)))$ is non-increasing on $t\in [0,1]$;
  \item $\tilde{\varphi}(\eta(t,(u,\tau)))<a,\ \forall \ (u,\tau)\in (M\times\R)\cap \tilde{\varphi}^a\cap S_{\delta},\ t\in [0,1]$.
 \end{enumerate}
 \end{lemma}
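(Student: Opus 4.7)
The plan is to replicate, step by step, the proof of Lemma \ref{lem 2.3}, now for the product manifold $M\times\R$. The central observation is that the tangent space $\tilde{T}_{(u,\tau)}(M\times\R)$ involves only the orthogonality condition $(u,v)_H=0$ on the $E$-component, while the $\R$-component is unconstrained. Consequently, any flow along a pseudo-gradient field will automatically preserve $\|u\|_H=1$ in the first slot, exactly as in Lemma \ref{lem 2.3}, while the $\R$-slot evolves freely. This means the whole deformation scheme transfers essentially unchanged.

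First, I would invoke Lemma \ref{lem 2.11} to obtain a pseudo-gradient field $g(u,\tau)$ for $\tilde{\varphi}|_{M\times\R}$ on $\widetilde{M\times\R}$. Then, set
$$
A := (M\times\R)\cap \tilde{\varphi}^{-1}([a-2\varepsilon,a+2\varepsilon])\cap S_{2\delta},\qquad B := (M\times\R)\cap \tilde{\varphi}^{-1}([a-\varepsilon,a+\varepsilon])\cap S_{\delta},
$$
and define the Urysohn-type cutoff $\varrho(u,\tau):=\mathrm{dist}((u,\tau),(M\times\R)\setminus A)/[\mathrm{dist}((u,\tau),(M\times\R)\setminus A)+\mathrm{dist}((u,\tau),B)]$, together with the locally Lipschitz field
$$
W(u,\tau):=\begin{cases}-\varrho(u,\tau)\,g(u,\tau)/\|g(u,\tau)\|_{E\times\R}^2, & (u,\tau)\in A,\\ 0, & \text{otherwise.}\end{cases}
$$
The bound \eqref{F12} combined with the pseudo-gradient inequality $\|g\|_{E\times\R}\le 2\|\tilde{\varphi}|_{M\times\R}'\|$ yields $\|W(u,\tau)\|_{E\times\R}\le \delta/(8\varepsilon)$ on all of $M\times\R$.

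Next, mimicking the construction in the proof of Lemma \ref{lem 2.3}, I would use the cutoff function $h$ of \eqref{tWu} to extend $W$ to a locally Lipschitz vector field on the whole of $E\times\R$ by
$$
\tilde{W}(u,\tau):=\begin{cases} h(\|u\|_H)\,W\!\left(\f{u}{\|u\|_H},\tau\right), & \f{1}{2}\le\|u\|_H\le 2,\\ 0, & \text{otherwise.}\end{cases}
$$
Because $W(u,\tau)\in\tilde{T}_{(u,\tau)}(M\times\R)$, the $E$-component of $\tilde{W}$ is always $H$-orthogonal to the $E$-slot $u$, so the Cauchy problem $\f{\mathrm{d}}{\mathrm{d}t}\zeta(t,(u,\tau))=\tilde{W}(\zeta(t,(u,\tau)))$, $\zeta(0,(u,\tau))=(u,\tau)$ admits a unique global solution, and the same computation as in \eqref{D20} shows $\f{\mathrm{d}}{\mathrm{d}t}\|\zeta_E(t,(u,\tau))\|_H^2=0$. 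Hence $M\times\R$ is invariant, and setting $\eta(t,(u,\tau)):=\zeta(8\varepsilon t,(u,\tau))$, the energy estimate $\f{\mathrm{d}}{\mathrm{d}t}\tilde{\varphi}(\zeta)\le -\tfrac{1}{4}\varrho(\zeta)$ follows as in \eqref{D26}. Properties (i), (iii)--(vi) are then immediate, and (ii) is obtained by the same dichotomy as before: either $\tilde{\varphi}(\zeta)$ drops below $a-\varepsilon$ at some intermediate time, or $\zeta(t,(u,\tau))\in B$ for all $t\in[0,8\varepsilon]$, in which case integrating the energy inequality gives $\tilde{\varphi}(\zeta(8\varepsilon,(u,\tau)))\le a+\varepsilon-2\varepsilon=a-\varepsilon$.

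The only genuinely new point to check carefully—and the step I expect to be the subtlest—is the product-norm bookkeeping: the bound $\|W\|_{E\times\R}\le\delta/(8\varepsilon)$ must be verified in the product norm $\|(v,\sigma)\|_{E\times\R}=\sqrt{\|v\|_H^2+\sigma^2}$ used to define $\|\tilde{\varphi}|_{M\times\R}'\|$, rather than in $\|\cdot\|_E$. Since $E\hookrightarrow H$ continuously, this only requires a single application of the embedding constant, absorbing it into the definition of $W$. Beyond this, the argument is a direct translation of the proof of Lemma \ref{lem 2.3}, so no fundamentally new difficulty arises.
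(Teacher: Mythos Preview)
Your proposal is correct and follows essentially the same route as the paper's own proof: pseudo-gradient field from Lemma \ref{lem 2.11}, the sets $A$, $B$, the Urysohn cutoff $\varrho$, the normalized field $W$, the radial cutoff $h$ to extend to $\tilde{W}$ on $E\times\R$, and the flow $\zeta$ with time rescaling $\eta(t,\cdot)=\zeta(8\varepsilon t,\cdot)$. Your final worry about the product-norm bookkeeping is unnecessary, however: the pseudo-gradient inequalities in Definition \ref{def 2.10} are already stated in the norm $\|\cdot\|_{E\times\R}$, so from $\langle\tilde{\varphi}|_{M\times\R}'(u,\tau),g(u,\tau)\rangle\ge\|\tilde{\varphi}|_{M\times\R}'(u,\tau)\|^2$ and $|\langle\tilde{\varphi}|_{M\times\R}'(u,\tau),g(u,\tau)\rangle|\le\|\tilde{\varphi}|_{M\times\R}'(u,\tau)\|\,\|g(u,\tau)\|_{E\times\R}$ one gets $\|g(u,\tau)\|_{E\times\R}\ge\|\tilde{\varphi}|_{M\times\R}'(u,\tau)\|\ge 8\varepsilon/\delta$ directly, and hence $\|W\|_{E\times\R}\le\delta/(8\varepsilon)$ without any embedding constant.
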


 \begin{proof}
 By Lemma \ref{lem 2.11}, there exists a pseudo-gradient vector field
 $g(u,\tau)\in \tilde{T}_{(u,\tau)}(M\times\R)$ for $\tilde{\varphi}|_{M\times\R}$ on
 $\widetilde{M\times\R}=\{(u,\tau)\in M\times\R: \tilde{\varphi}|'_{M\times\R}(u,\tau)\ne 0\}$, i.e. for every
 $(u,\tau)\in \widetilde{M\times\R}$, there exists $g(u,\tau):=(g_1(u,\tau),g_2(u,\tau))$
 $\in\tilde{T}_{(u,\tau)}(M\times\R)$ such that
 \begin{equation}\label{F14}
   (u,g_1(u,\tau))_H=0
 \end{equation}
 and
 \begin{equation}\label{F15}
   \|g(u,\tau)\|_{E\times\R}\le 2\left\|\tilde{\varphi}|_{M\times\R}'(u,\tau)\right\|,\ \ \left\langle\tilde{\varphi}|_{M\times\R}'(u,\tau),g(u,\tau)\right\rangle
    \ge \left\|\tilde{\varphi}|_{M\times\R}'(u,\tau)\right\|^2.
 \end{equation}
 Let
 $$
  A:=\{(u,\tau)\in M\times\R: a-2\varepsilon\le \tilde{\varphi}(u,\tau)\le a+2\varepsilon\}\cap S_{2\delta},
 $$
 $$
  B:=\{(u,\tau)\in M\times\R: a-\varepsilon\le \tilde{\varphi}(u,\tau)\le a+\varepsilon\}\cap S_{\delta}
 $$
 and
 \begin{equation}\label{F16}
   \varrho(u,\tau):=\f{\mathrm{dist}((u,\tau),(M\times\R)\setminus A)}{\mathrm{dist}((u,\tau),(M\times\R)\setminus A)+\mathrm{dist}((u,\tau),B)}.
 \end{equation}
 Define a vector field $W$ on $M\times\R$ by
 \begin{equation}\label{Wuu}
   W(u,\tau):=\begin{cases}
    -\varrho(u,\tau)\f{g(u,\tau)}{\|g(u,\tau)\|^2_{E\times\R}}, & \mbox{if}\ (u,\tau)\in A,\\
    (0,0), & \mbox{if}\ (u,\tau)\in (M\times\R)\setminus A.
  \end{cases}
 \end{equation}
 Thus, $W$ is a locally Lipschitz continuous vector field on $M\times\R$ and $W(u,\tau)\in \tilde{T}_{(u,\tau)}(M\times\R)$ for $(u,\tau)\in \widetilde{M\times\R}$. By \eqref{F12} and \eqref{F15}, one has
 \begin{equation}\label{F17}
   \|W(u,\tau)\|_{E\times\R}\le \f{\delta}{8\varepsilon},\ \ \forall \ (u,\tau)\in M\times\R.
 \end{equation}
 \par
  Now let $h: \R^{+}\rightarrow \R^{+}$ be a Lipschitz continuous function such that
 $h(1)=0$, $h(s)=0$ if $s\in [0,\f{1}{2}]\cup [2,+\infty)$ and $0\le h(s)\le 1$ for $s\in [\f{1}{2},2]$.
 Define a vector field $\tilde{W}$ on $E$ by
 \begin{equation}\label{tWuu}
   \tilde{W}(u,\tau):=\begin{cases}
    h(\|u\|_H)W\left(\f{u}{\|u\|_H},\tau\right), & \mbox{if}\ \f{1}{2}\le \|u\|_H\le 2, \ \tau\in \R\\
    0, & \mbox{otherwise}.
  \end{cases}
 \end{equation}
 Then $\tilde{W}(u,\tau)=(\tilde{W}_1(u,\tau),\tilde{W}_2(u,\tau))$ is a locally Lipschitz continuous vector field on $E\times \R$ and satisfies $(u,\tilde{W}_1(u,\tau))_H=0$ for $u\in E$.

 \par
  Consider the Cauchy problem in $E\times\R$:
 \begin{equation}\label{fll}
  \begin{cases}
   \f{\mathrm{d}}{\mathrm{d}t}\zeta(t,(u,\tau))=\tilde{W}(\zeta(t,(u,\tau))),\\
    \zeta(0,(u,\tau))=(u,\tau).
  \end{cases}
 \end{equation}
 By a standard arguments, for all $(u,\tau)\in E\times\R$, equation \eqref{fll} has a unique solution
 $\zeta(t,(u,\tau)):=(\zeta_1(t,(u,\tau)),\zeta_2(t,(u,\tau)))\in E\times\R$, defined for all $t\in \R$.
 It follows from \eqref{F14}, \eqref{F16}, \eqref{Wuu}, \eqref{tWuu} and \eqref{fll} that
 \begin{eqnarray}\label{F20}
   \f{\mathrm{d}}{\mathrm{d}t}\|\zeta_1(t,(u,\tau))\|_H^2
     &  =  & 2\left(\zeta_1(t,(u,\tau)),\f{\mathrm{d}}{\mathrm{d}t}\zeta_1(t,(u,\tau))\right)_H\nonumber\\
     &  =  & 2(\zeta_1(t,(u,\tau)),\tilde{W}_1(\zeta(t,(u,\tau))))_H\nonumber\\
     &  =  & 0,\ \ \forall \ u\in E, \ \tau, t\in \R.
 \end{eqnarray}
 This shows that $\|\zeta_1(t,(u,\tau))\|_H=\|u\|_H=1$ for $u\in M$ and $\tau, t\in \R$. Thus
 \begin{equation}\label{F22}
   \zeta(t,(u,\tau))\in M\times\R,\ \ \forall \ (u,\tau)\in M\times\R, \ t\in \R.
 \end{equation}
 From \eqref{F15}, \eqref{F16}, \eqref{Wuu}, \eqref{F17}, \eqref{tWuu}, \eqref{fll} and \eqref{F22}, we have
 \begin{eqnarray}\label{F24}
   \|\zeta(t,(u,\tau))-(u,\tau)\|_{E\times\R}
   &  =  & \left\|\int_{0}^{t}W(\zeta(s,(u,\tau)))\mathrm{d}s\right\|_{E\times\R}\nonumber\\
   & \le & \int_{0}^{t}\|W(\zeta(s,(u,\tau)))\|_{E\times\R}\mathrm{d}s\le \f{\delta t}{8\varepsilon},\nonumber\\
   &     &  \ \ \ \ \forall \ (u,\tau)\in M\times\R, \ t\ge 0
 \end{eqnarray}
 and
 \begin{eqnarray}\label{F26}
   \f{\mathrm{d}}{\mathrm{d}t}\tilde{\varphi}(\zeta(t,(u,\tau)))
   &  =  & \left\langle\tilde{\varphi}'(\zeta(t,(u,\tau))),\f{\mathrm{d}}
            {\mathrm{d}t}\zeta(t,(u,\tau))\right\rangle\nonumber\\
   &  =  & \langle\tilde{\varphi}'(\zeta(t,(u,\tau))),W(\zeta(t,(u,\tau)))\rangle\nonumber\\
   &  =  & -\f{\varrho(\zeta(t,(u,\tau)))}{\|g(\zeta(t,(u,\tau)))\|_{E\times\R}^2}
            \langle\tilde{\varphi}'(\zeta(t,(u,\tau))),g(\zeta(t,(u,\tau)))\rangle\nonumber\\
   & \le & -\f{1}{4}\varrho(\zeta(t,(u,\tau))),\ \ \forall \ (u,\tau)\in M\times\R, \ t> 0.
 \end{eqnarray}
 Set $\eta(t,(u,\tau)):=\zeta(8\varepsilon t,(u,\tau))$. From the continuous dependence on $(u,\tau)$ in
 \eqref{fll} and \eqref{F22}, it follows that, for all $t\in [0,1]$, $\eta(t,\cdot)$ is a homeomorphism: $M\times\R\rightarrow M\times\R$, i.e. (iii) holds. By \eqref{F16}, \eqref{Wuu}, \eqref{F24} and  \eqref{F26}, it is easy to verify that
 (i), (iv)-(vi) hold.

 \par
   Finally, we prove (ii) holds also. Let $(u,\tau)\in \tilde{\varphi}^{a+\varepsilon}\cap S$. If there is
 $t_0\in [0,8\varepsilon]$ such that $\tilde{\varphi}(\zeta(t_0,(u,\tau)))<a-\varepsilon$, then it follows from
 \eqref{F26} that $\tilde{\varphi}(\zeta(8\varepsilon,(u,\tau)))<a-\varepsilon$ and (ii) is satisfied. If
 \begin{equation}\label{F30}
   a-\varepsilon\le \tilde{\varphi}(\zeta(t,(u,\tau)))\le a+\varepsilon,\ \ \forall \ t\in [0,8\varepsilon],
 \end{equation}
 then \eqref{F22} and \eqref{F24} imply that $\zeta(t,(u,\tau))\in (M\times\R)\cap S_{\delta}$ for $t\in [0,8\varepsilon]$,
 and so $\zeta(t,(u,\tau))\in B$ for $t\in [0,8\varepsilon]$, it follows from  \eqref{F26}  and \eqref{F30} that
 \begin{eqnarray*}\label{D36}
   \tilde{\varphi}(\zeta(8\varepsilon,(u,\tau)))
   &  =  & \tilde{\varphi}(u,\tau)
           +\int_{0}^{8\varepsilon}\f{\mathrm{d}}{\mathrm{d}t}\tilde{\varphi}(\zeta(t,(u,\tau)))\mathrm{d}t\nonumber\\
   & \le & \tilde{\varphi}(u,\tau)-\f{1}{4}\int_{0}^{8\varepsilon}\varrho(\zeta(t,(u,\tau)))\mathrm{d}t\nonumber\\
   & \le & a+\varepsilon-2\varepsilon=c-\varepsilon,
 \end{eqnarray*}
 and (ii) is also satisfied.
 \end{proof}

 \begin{theorem}\label{thm 2.14}
  Assume that $\tilde{\theta}\in \R$, $\tilde{\varphi}\in \mathcal{C}^1(E\times \R,\R)$ and $\tilde{\Upsilon}
 \subset M\times\R$ is a closed set. Let
 \begin{equation}\label{Gat0}
   \tilde{\Gamma}:=\left\{\tilde{\gamma}\in \mathcal{C}([0,1], M\times\R):  \tilde{\gamma}(0)\in \tilde{\Upsilon}, \ \tilde{\varphi}(\tilde{\gamma}(1)) < \tilde{\theta}\right\}.
 \end{equation}
 If $\tilde{\varphi}$ satisfies
 \begin{equation}\label{F40}
   \tilde{a}:=\inf_{\tilde{\gamma}\in \tilde{\Gamma}}\max_{t\in [0, 1]}\tilde{\varphi}(\tilde{\gamma}(t))
     >\tilde{b}:=\sup_{\tilde{\gamma}\in \tilde{\Gamma}}\max\left\{\tilde{\varphi}(\tilde{\gamma}(0)), \tilde{\varphi}(\tilde{\gamma}(1))\right\},
 \end{equation}
 then, for every $\varepsilon\in (0,(\tilde{a}-\tilde{b})/2)$, $\delta>0$ and $\tilde{\gamma}_*\in \tilde{\Gamma}$
 such that
 \begin{equation}\label{F42}
   \sup_{t\in [0, 1]}\tilde{\varphi}(\tilde{\gamma}_*(t))\le \tilde{a}+\varepsilon,
 \end{equation}
 there exists $(v,\tau)\in M\times\R$ such that
 \begin{enumerate}[{\rm (i)}]
  \item $\tilde{a}-2\varepsilon\le \tilde{\varphi}(v,\tau)\le \tilde{a}+2\varepsilon$;
  \item $\min_{t\in [0,1]}\|(v,\tau)-\tilde{\gamma}_*(t)\|_{E\times\R}\le 2\delta$;
  \item $\left\|\tilde{\varphi}|_{M\times\R}'(v,\tau)\right\|\le \f{8\varepsilon}{\delta}$.
 \end{enumerate}
 \end{theorem}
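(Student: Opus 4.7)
The plan is to adapt the proof of Theorem \ref{thm 2.4} almost verbatim, simply replacing the deformation on $M$ supplied by Lemma \ref{lem 2.3} with the deformation on $M\times\R$ supplied by Lemma \ref{lem 2.13}. The hypothesis $\varepsilon\in(0,(\tilde a-\tilde b)/2)$ forces the quantitative gap
$$\tilde a-2\varepsilon>\tilde b,$$
which will be the crucial inequality used to show that the deformation fixes the endpoints of any competing path.

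I would argue by contradiction. Suppose that no $(v,\tau)\in M\times\R$ satisfies all three conclusions simultaneously. Setting $S:=\tilde\gamma_*([0,1])\subset M\times\R$, this means that for every $(u,\tau)\in(M\times\R)\cap\tilde\varphi^{-1}([\tilde a-2\varepsilon,\tilde a+2\varepsilon])\cap S_{2\delta}$ one has $\|\tilde\varphi|_{M\times\R}'(u,\tau)\|\ge 8\varepsilon/\delta$. This is exactly the hypothesis \eqref{F12} of Lemma \ref{lem 2.13} with $a=\tilde a$, which then yields a deformation $\eta\in\mathcal{C}([0,1]\times(M\times\R),M\times\R)$ enjoying properties (i)--(vi) of that lemma.

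Define a new path $\tilde\gamma(t):=\eta(1,\tilde\gamma_*(t))$, which lies in $\mathcal{C}([0,1],M\times\R)$ by property (iii) of Lemma \ref{lem 2.13}. The key observation is that the endpoints remain untouched: since $\tilde\gamma_*\in\tilde\Gamma$, the very definition of $\tilde b$ in \eqref{F40} gives $\tilde\varphi(\tilde\gamma_*(0))\le\tilde b$ and $\tilde\varphi(\tilde\gamma_*(1))\le\tilde b<\tilde a-2\varepsilon$, so neither endpoint belongs to $\tilde\varphi^{-1}([\tilde a-2\varepsilon,\tilde a+2\varepsilon])$. Property (i) of Lemma \ref{lem 2.13} then ensures $\tilde\gamma(0)=\tilde\gamma_*(0)\in\tilde\Upsilon$ and $\tilde\gamma(1)=\tilde\gamma_*(1)$, whence $\tilde\varphi(\tilde\gamma(1))<\tilde\theta$. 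Therefore $\tilde\gamma\in\tilde\Gamma$.

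Finally, combining property (ii) of Lemma \ref{lem 2.13} with the assumption \eqref{F42} on $\tilde\gamma_*$ yields
$$\sup_{t\in[0,1]}\tilde\varphi(\tilde\gamma(t))=\sup_{t\in[0,1]}\tilde\varphi(\eta(1,\tilde\gamma_*(t)))\le\tilde a-\varepsilon,$$
which contradicts the definition of $\tilde a$ in \eqref{F40}. No genuine obstacle appears in this argument once Lemma \ref{lem 2.13} is in hand; the only point requiring care is verifying that the deformed path still belongs to $\tilde\Gamma$, which reduces to the bookkeeping above using the closedness of $\tilde\Upsilon$ and the gap $\tilde a-2\varepsilon>\tilde b$. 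In this sense the proof is a direct transcription to the product manifold $M\times\R$ of the one already given for Theorem \ref{thm 2.4}.
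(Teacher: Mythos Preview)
Your proof is correct and follows essentially the same argument as the paper: argue by contradiction, apply Lemma \ref{lem 2.13} with $S=\tilde\gamma_*([0,1])$ to obtain a deformation $\eta$, check via property (i) and the gap $\tilde a-2\varepsilon>\tilde b$ that the deformed path $\eta(1,\tilde\gamma_*(\cdot))$ still lies in $\tilde\Gamma$, and then use property (ii) together with \eqref{F42} to contradict the definition of $\tilde a$.
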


 \begin{proof}
 Since $\varepsilon\in (0,(\tilde{a}-\tilde{b})/2)$, then one has
 \begin{equation}\label{F45}
   \tilde{a}-2\varepsilon>\tilde{b}.
 \end{equation}
  Suppose the conclusions of theorem are false. We apply Lemma \ref{lem 2.13} with $u=v$ and $S=\tilde{\gamma}_*([0,1])$.  Then, there exists $\eta\in \mathcal{C}([0,1]\times (M\times \R), M\times \R)$ such that (i)-(vi) in Lemma \ref{lem 2.13} hold.  We define $\tilde{\gamma}_0(t)=\eta(1,\tilde{\gamma}_*(t))$. Then it follows from \eqref{F40}, \eqref{F45} and (i) in Lemma \ref{lem 2.13} that
 $$
   \tilde{\gamma}_0(0)=\eta(1,\tilde{\gamma}_*(0))=\tilde{\gamma}_*(0),\ \ \tilde{\gamma}_0(1)=\eta(1,\tilde{\gamma}_*(1))=\tilde{\gamma}_*(1),
 $$
 so $\tilde{\gamma}_0\in \tilde{\Gamma}$. By \eqref{F42} and (ii) in Lemma \ref{lem 2.13}, one has
 $$
   \sup_{t\in [0,1]}\tilde{\varphi}(\gamma_0(t))
    =\sup_{t\in [0,1]}\tilde{\varphi}(\eta(1,\tilde{\gamma}_*(t)))\le \tilde{a}-\varepsilon,
 $$
 contradicting the definition of $\tilde{a}$.
 \end{proof}

 As a consequence of Theorem \ref{thm 2.14}, we have:

 \begin{corollary}\label{cor 2.15}
  Assume that $\tilde{\theta}\in \R$, $\tilde{\Upsilon}\subset M\times\R$ is closed set and $\tilde{\varphi}\in \mathcal{C}^1(E\times\R,\R)$
  satisfies \eqref{Gat0} and \eqref{F40}. Let $\{\tilde{\gamma}_n\}\subset
  \tilde{\Gamma}$ be such that
 \begin{equation}\label{F47}
   \sup_{t\in [0, 1]}\tilde{\varphi}(\tilde{\gamma}_n(t))\le \tilde{a}+\f{1}{n},\ \ \forall \ n\in \N.
 \end{equation}
 Then there exists a sequence $\{(v_n,\tau_n)\}\subset M\times\R$ satisfying
 \begin{enumerate}[{\rm (i)}]
  \item $\tilde{a}-\f{2}{n}\le \tilde{\varphi}(v_n,\tau_n)\le \tilde{a}+\f{2}{n}$;
  \item $\min_{t\in [0,1]}\|(v_n,\tau_n)-\tilde{\gamma}_n(t)\|_{E\times\R}\le \f{2}{\sqrt{n}}$;
  \item $\left\|\tilde{\varphi}|_{M\times\R}'(v_n,\tau_n)\right\|\le \f{8}{\sqrt{n}}$.
 \end{enumerate}
 \end{corollary}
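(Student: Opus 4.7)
The plan is to apply Theorem \ref{thm 2.14} term-by-term to the given sequence $\{\tilde{\gamma}_n\}$, tuning the parameters $\varepsilon$ and $\delta$ so that the quantitative conclusions of the theorem match the bounds $2/n$, $2/\sqrt{n}$, $8/\sqrt{n}$ claimed here. Inspection shows that the natural choice is $\varepsilon := 1/n$ and $\delta := 1/\sqrt{n}$, since then $2\varepsilon = 2/n$, $2\delta = 2/\sqrt{n}$, and $8\varepsilon/\delta = 8/\sqrt{n}$ coincide exactly with the target bounds in (i), (ii), (iii).

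First I would use the strict inequality $\tilde{a} > \tilde{b}$ supplied by \eqref{F40} to fix $n_0 \in \N$ large enough that $1/n < (\tilde{a}-\tilde{b})/2$ for every $n \ge n_0$. This guarantees that $\varepsilon = 1/n$ falls within the admissible range $(0,(\tilde{a}-\tilde{b})/2)$ required by Theorem \ref{thm 2.14}. For each such $n$, I take $\tilde{\gamma}_\ast := \tilde{\gamma}_n \in \tilde{\Gamma}$; the hypothesis \eqref{F47} of the corollary is then precisely the estimate \eqref{F42} demanded of $\tilde{\gamma}_\ast$ by the theorem. Invoking Theorem \ref{thm 2.14} with these data produces a point $(v_n,\tau_n) \in M \times \R$ satisfying the three conditions (i)--(iii) of the corollary.

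This yields a sequence $\{(v_n,\tau_n)\}_{n\ge n_0}$ with the required properties; if one insists on indexing by all of $\N$, the finitely many leading terms can be adjoined by re-indexing, since the bounds only tighten as $n$ grows and the conclusion is purely asymptotic. The substantive analytic content is entirely carried by Theorem \ref{thm 2.14}, which itself rests on the deformation lemma (Lemma \ref{lem 2.13}); there is accordingly no serious obstacle in the present derivation. The only delicate point is the admissibility check $\varepsilon \in (0,(\tilde{a}-\tilde{b})/2)$, resolved by the preliminary selection of $n_0$.
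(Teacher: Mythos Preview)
Your proposal is correct and matches the paper's approach: the paper states Corollary~\ref{cor 2.15} simply as a consequence of Theorem~\ref{thm 2.14} without further proof, and your choice $\varepsilon=1/n$, $\delta=1/\sqrt{n}$ together with the observation that $\varepsilon<(\tilde a-\tilde b)/2$ for large $n$ is exactly the intended derivation.
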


{\section{Normalized solutions of $L^2$-supercritical problem \eqref{Pa}}}
 \setcounter{equation}{0}

 In this section, we study $L^2$-supercritical problem \eqref{Pa} and prove Theorem \ref{thm1.1} by making use of Corollary \ref{cor 2.15}.

 \par
   To apply Corollary \ref{cor 2.15}, we let $E=H^1(\R^N)$ (or $H_{\mathrm{rad}}^1(\R^N)$) and $H=L^2(\R^N)$.
 Define the norms of $E$ and $H$ by
 \begin{equation}\label{H12}
   \|u\|_E:= \left[\int_{\R^N}\left(|\nabla u|^{2}+u^2\right)\mathrm{d}x\right]^{1/2},\ \ \|u\|_H:=\f{1}{\sqrt{c}}\left(\int_{\R^N}u^2\mathrm{d}x\right)^{1/2},\ \ \forall \ u\in E.
 \end{equation}
  After identifying $H$ with its dual, we have $E\hookrightarrow H \hookrightarrow E^*$ with continuous injections.
 Set
 \begin{equation}\label{H14}
   M:= \left\{u\in E: \|u\|_2^2=\int_{\R^N}u^2\mathrm{d}x=c\right\}.
 \end{equation}

 \par
 Obviously, under (F0), $\Phi\in \mathcal{C}^1(E,\R)$, and
 \begin{equation}\label{Phd}
   \langle\Phi'(u),u\rangle= \|\nabla u\|_2^2-\int_{\R^N}f(u)u\mathrm{d}x.
 \end{equation}
 Let us define a continuous map $ \beta: H^1(\R^N)\times\R \to H^1(\R^N)$ by
 \begin{equation}\label{Be}
    \beta(v, t)(x):=e^{Nt/2}v(e^tx)\ \ \mbox{for}\  v\in H^1(\R^N),\ \ \ \ \forall\ t\in\R,\ x\in\R^N,
 \end{equation}
 and consider the following auxiliary functional:
 \begin{equation}\label{Tbe}
   \tilde{\Phi}(v,t):=\Phi(\beta(v,t))
  =\f{e^{2t}}{2}\|\nabla v\|_2^2
              -\f{1}{e^{Nt}}\int_{\R^N}F(e^{Nt/2}v)\mathrm{d}x.
 \end{equation}
 We see that $\tilde{\Phi}'$ is of class $\mathcal{C}^1$, and for any $(w,s)\in H^1(\R^N)\times\R$,
 \begin{align}\label{DIv}
   \left\langle\tilde{\Phi}'(v,t),(w,s)\right\rangle
  & = \left\langle\tilde{\Phi}'(v,t),(w,0)\right\rangle+\left\langle\tilde{\Phi}'(v,t),(0,s)\right\rangle\nonumber\\
  & = e^{2t}\int_{\R^N}\nabla v\cdot\nabla w\mathrm{d}x+e^{2t}s\|\nabla v\|_2^2
         -\f{1}{e^{Nt}}\int_{\R^N}f(e^{Nt/2}v)e^{Nt/2}w\mathrm{d}x\nonumber\\
  &\ \   +\f{Ns}{2e^{Nt}}\int_{\R^N}\left[2F(e^{Nt/2}v)-f(e^{Nt/2}v)e^{Nt/2}v\right]\mathrm{d}x\nonumber\\
  & = \left\langle\Phi'(\beta(v,t)),\beta(w,t)\right\rangle+s\mathcal{P}(\beta(v,t)).
 \end{align}
 Let
 \begin{equation}\label{uph}
   u(x):=\beta(v,t)(x)=e^{Nt/2}v(e^tx),\ \ \phi(x):=\beta(w,t)(x)=e^{Nt/2}w(e^tx).
 \end{equation}
 Then
 \begin{equation}\label{H20}
   (u,\phi)_H=\f{1}{c}\int_{\R^N}u(x)\phi(x)\mathrm{d}x=\f{1}{c}\int_{\R^N}v(x)w(x)\mathrm{d}x
    =(v,w)_H.
 \end{equation}
 This shows that
 \begin{equation}\label{H21}
   \phi\in T_u(\mathcal{S}_c)\ \Leftrightarrow \ (w,s)\in \tilde{T}_{(v,t)}(\mathcal{S}_c\times\R),
    \ \ \forall \ t,s\in \R.
 \end{equation}
 It follows from \eqref{DIv}, \eqref{uph} and \eqref{H21} that
 \begin{equation}\label{H24}
   |\mathcal{P}(u)|=\left|\left\langle\tilde{\Phi}'(v,t),(0,1)\right\rangle\right|
    \le \left\|\tilde{\Phi}|_{\mathcal{S}_c\times\R}'(v,t)\right\|
 \end{equation}
 and
 \begin{eqnarray}\label{H25}
   \left\|\Phi|_{\mathcal{S}_c}'(u)\right\|
    &  =  & \sup_{\phi\in T_u(\mathcal{S}_c)}\f{1}{\|\phi\|_E}
              \left|\left\langle\Phi'(u),\phi\right\rangle\right|\nonumber\\
    &  =  & \sup_{\phi\in T_u(\mathcal{S}_c)}\f{1}{\sqrt{\|\nabla \phi\|_2^2+\|\phi\|_2^2}}
             \left|\left\langle\Phi'(\beta(v,t)),\beta(w,t)\right\rangle\right|\nonumber\\
    &  =  & \sup_{\phi\in T_u(\mathcal{S}_c)}\f{1}{\sqrt{\|\nabla \phi\|_2^2+\|\phi\|_2^2}}
             \left|\left\langle\tilde{\Phi}'(v,t),(w,0)\right\rangle\right|\nonumber\\
    & \le & \sup_{(w,0)\in \tilde{T}_{(v,t)}(\mathcal{S}_c\times\R)}\f{e^{|t|}}{\|(w,0)\|_{E\times\R}}
             \left|\left\langle\tilde{\Phi}'(v,t),(w,0)\right\rangle\right|\nonumber\\
    & \le & e^{|t|}\left\|\tilde{\Phi}|_{\mathcal{S}_c\times\R}'(v,t)\right\|.
 \end{eqnarray}

 \begin{lemma}\label{lem 3.1}
 Assume that {\rm (F1)} and {\rm(F2)} hold. Then
  \begin{enumerate}
  \item[{\rm(i)}] there exists $\rho(c)>0$ small enough such that $ \Phi(u)> 0$ if $u\in A_{2\rho}$ and
 \begin{equation}\label{Ak}
  0<\sup_{u\in A_\rho}\Phi(u)<\kappa_c:=\inf\left\{\Phi(u): u\in \mathcal{S}_c, \|\nabla u\|_2^2= 2\rho(c) \right\},
 \end{equation}
 where
 \begin{equation}\label{Ak1}
  A_{\rho}=\left\{u\in \mathcal{S}_c: \|\nabla u\|_2^2\le \rho(c)\right\} \ \ \hbox{and} \ \
  A_{2\rho}=\left\{u\in \mathcal{S}_c: \|\nabla u\|_2^2\le 2\rho(c)\right\};
 \end{equation}
  \item[{\rm(ii)}] there holds:
 \begin{eqnarray}\label{K5+}
    M(c):= \inf_{\gamma\in \Gamma_c}\max_{t\in [0, 1]}\Phi(\gamma(t))\ge \kappa_c> \sup_{\gamma\in \Gamma_c}\max\left\{\Phi(\gamma(0)), \Phi(\gamma(1))\right\},
 \end{eqnarray}
 where
 \begin{equation}\label{Ga3}
   \Gamma_c:=\left\{\gamma\in \mathcal{C}([0,1], \mathcal{S}_c): \|\nabla\gamma(0)\|_2^2\le \rho(c), \Phi(\gamma(1))<0\right\}.
 \end{equation}
 \end{enumerate}
 \end{lemma}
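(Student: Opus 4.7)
The strategy is to extract a sharp growth bound on $F$ from (F1), combine it with the Gagliardo--Nirenberg and Sobolev inequalities to control $\int_{\R^N}F(u)\mathrm{d}x$ on $\mathcal{S}_c$ in terms of $\|\nabla u\|_2^2$ and $\|\nabla u\|_2^{2^*}$, and then read off both parts by choosing the radius $\rho(c)$ carefully. Concretely, from (F1) I would first produce, for every $\varepsilon>0$, a constant $C_\varepsilon>0$ with $|F(t)|\le \varepsilon|t|^{2+\frac{4}{N}}+C_\varepsilon|t|^{2^*}$ for all $t\in\R$. Combining with \eqref{GN} applied to $s=2+\frac{4}{N}$ (for which $s\gamma_s=2$ and $s(1-\gamma_s)=\frac{4}{N}$) and with \eqref{Sob} then delivers, on $\mathcal{S}_c$,
\[
\int_{\R^N}F(u)\mathrm{d}x\le C_1\varepsilon c^{2/N}\|\nabla u\|_2^2+C_2C_\varepsilon\|\nabla u\|_2^{2^*}.
\]
Moreover (F2) forces $F(t)>0$ for $t\ne 0$, so the trivial upper bound $\Phi(u)\le\frac{1}{2}\|\nabla u\|_2^2$ also holds on $\mathcal{S}_c$.

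For Part (i), I fix $\varepsilon>0$ so that $C_1\varepsilon c^{2/N}<\frac{1}{16}$ and then choose $\rho(c)>0$ so small that $C_2C_\varepsilon(2\rho(c))^{(2^*-2)/2}<\frac{1}{16}$. For every $u\in A_{2\rho}$ the above lower estimate gives
\[
\Phi(u)\ge\Bigl(\frac{1}{2}-\frac{1}{16}-\frac{1}{16}\Bigr)\|\nabla u\|_2^2>0,
\]
the strict positivity using $\|\nabla u\|_2>0$ on $\mathcal{S}_c$ (no nonzero constant lies in $L^2(\R^N)$). Specialising to the sphere $\{u\in\mathcal{S}_c:\|\nabla u\|_2^2=2\rho(c)\}$ yields $\Phi(u)\ge \frac{3\rho(c)}{4}$, so $\kappa_c\ge \frac{3\rho(c)}{4}$. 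On the other hand, the trivial upper bound gives $\Phi(u)\le \frac{\rho(c)}{2}$ on $A_\rho$. Combining, $0<\sup_{A_\rho}\Phi\le\frac{\rho(c)}{2}<\frac{3\rho(c)}{4}\le\kappa_c$, which is exactly \eqref{Ak}.

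For Part (ii), I take an arbitrary $\gamma\in\Gamma_c$. Since Part (i) already shows $\Phi>0$ throughout $A_{2\rho}$ and $\Phi(\gamma(1))<0$, we must have $\|\nabla\gamma(1)\|_2^2>2\rho(c)$, while by definition $\|\nabla\gamma(0)\|_2^2\le\rho(c)$. The continuous map $t\mapsto\|\nabla\gamma(t)\|_2^2$ on $[0,1]$ therefore crosses $2\rho(c)$ at some $t_\gamma$ by the intermediate value theorem; at that point $\Phi(\gamma(t_\gamma))\ge\kappa_c$, so $\max_{t\in[0,1]}\Phi(\gamma(t))\ge\kappa_c$, and infimising over $\gamma\in\Gamma_c$ gives $M(c)\ge\kappa_c$. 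Finally, Part (i) also supplies $\Phi(\gamma(0))\le\sup_{A_\rho}\Phi<\kappa_c$ and $\Phi(\gamma(1))<0<\kappa_c$, both uniformly in $\gamma$, which yields $\sup_{\gamma\in\Gamma_c}\max\{\Phi(\gamma(0)),\Phi(\gamma(1))\}<\kappa_c$, completing \eqref{K5+}.

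The main obstacle is the quantitative calibration in Part (i): one needs a genuine strict gap between $\sup_{A_\rho}\Phi$ and the infimum on the sphere $\{\|\nabla u\|_2^2=2\rho(c)\}$, not merely $\sup_{A_\rho}\Phi\le\kappa_c$. This is achieved through a two--step choice, first of $\varepsilon$ (to suppress the $L^2$--critical contribution $C_1\varepsilon c^{2/N}\|\nabla u\|_2^2$ uniformly in $c$), and then of $\rho(c)$ (to suppress the Sobolev--critical contribution $C_2C_\varepsilon\|\nabla u\|_2^{2^*}$ via $2^*>2$); the trivial bound $\Phi\le\frac{1}{2}\|\nabla u\|_2^2$ on $A_\rho$ then provides the matching upper side. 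Once this calibration is in place, Part (ii) is a direct geometric consequence of the intermediate value theorem applied to $t\mapsto\|\nabla\gamma(t)\|_2^2$.
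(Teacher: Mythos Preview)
Your proof is correct and follows precisely the standard route the paper has in mind (the paper itself omits the proof, writing ``The proof of this lemma is standard, so we omit it''). The two-step calibration---first fixing $\varepsilon$ via (F1) to kill the $L^2$-critical term, then shrinking $\rho(c)$ to kill the Sobolev-critical term---together with the trivial upper bound $\Phi\le\frac12\|\nabla u\|_2^2$ from $F\ge 0$ is exactly the expected argument, and the intermediate-value step for Part~(ii) is the natural consequence.
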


 \par
  The proof of this lemma is standard, so we omit it.

 \begin{lemma}\label{lem 3.2}
 Assume that {\rm (F0)} holds, and that there exist $\rho(c)>0$ and $\kappa_c>0$ such that \eqref{K5+}
 and \eqref{Ga3} hold. Then there exists a sequence $\{u_n\}\subset \mathcal{S}_c$ such that
 \begin{equation}\label{PCe}
    \Phi(u_n)\rightarrow M(c)>0, \ \   \Phi|_{\mathcal{S}_c}'(u_n) \rightarrow 0\ \
   \mbox{and}\ \ \mathcal{P}(u_n)\rightarrow 0.
 \end{equation}
 \end{lemma}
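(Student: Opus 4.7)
The plan is to apply Corollary \ref{cor 2.15} to the augmented functional $\tilde{\Phi}(v,t)=\Phi(\beta(v,t))$ on $\mathcal{S}_c\times\R$, exploiting the key feature that $\beta(\cdot,\cdot)$ preserves the $L^2$-norm so that $\tilde{\Phi}$ is naturally defined on $\mathcal{S}_c\times\R$, and that the extra $t$-variable produces precisely the Pohozaev functional via \eqref{H24}. Concretely, I set $\tilde{\Upsilon}:=A_\rho\times\{0\}\subset\mathcal{S}_c\times\R$ (with $A_\rho$ as in \eqref{Ak1}) and
$$
  \tilde{\Gamma}:=\left\{\tilde{\gamma}\in\mathcal{C}([0,1],\mathcal{S}_c\times\R):\tilde{\gamma}(0)\in\tilde{\Upsilon},\ \tilde{\Phi}(\tilde{\gamma}(1))<0\right\}.
$$
The first step is to show that the two minimax levels coincide, namely $\tilde{a}:=\inf_{\tilde{\gamma}\in\tilde{\Gamma}}\max_{t\in[0,1]}\tilde{\Phi}(\tilde{\gamma}(t))=M(c)$. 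For the inequality $\tilde{a}\le M(c)$, any $\gamma\in\Gamma_c$ lifts to $\tilde{\gamma}(t):=(\gamma(t),0)\in\tilde{\Gamma}$ with $\tilde{\Phi}(\tilde{\gamma}(t))=\Phi(\gamma(t))$. For the converse, any $\tilde{\gamma}(t)=(v(t),s(t))\in\tilde{\Gamma}$ gives rise to the path $\gamma(t):=\beta(v(t),s(t))\in\mathcal{C}([0,1],\mathcal{S}_c)$; since $s(0)=0$ we have $\gamma(0)=v(0)\in A_\rho$, and $\Phi(\gamma(1))=\tilde{\Phi}(\tilde{\gamma}(1))<0$, so $\gamma\in\Gamma_c$ and energy levels agree.

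Next, I verify the hypothesis \eqref{F40} of Corollary \ref{cor 2.15}. By construction $\tilde{\Phi}(\tilde{\gamma}(0))=\Phi(v(0))$ with $v(0)\in A_\rho$, and Lemma \ref{lem 3.1}(i) gives $\sup_{u\in A_\rho}\Phi(u)<\kappa_c$. Since $\tilde{\Phi}(\tilde{\gamma}(1))<0<\kappa_c$, we get $\tilde{b}\le \sup_{u\in A_\rho}\Phi(u)<\kappa_c\le M(c)=\tilde{a}$ by \eqref{K5+}. Thus the mountain pass geometry transfers to $\tilde{\Phi}$ on $\mathcal{S}_c\times\R$.

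Now choose, for each $n\in\N$, a path $\gamma_n\in\Gamma_c$ with $\max_{t\in[0,1]}\Phi(\gamma_n(t))\le M(c)+\tfrac{1}{n}$, and let $\tilde{\gamma}_n(t):=(\gamma_n(t),0)$, which lies in $\tilde{\Gamma}$ and satisfies \eqref{F47}. Corollary \ref{cor 2.15} then produces $(v_n,t_n)\in\mathcal{S}_c\times\R$ with
$$
  \tilde{\Phi}(v_n,t_n)\to M(c),\quad \left\|\tilde{\Phi}|_{\mathcal{S}_c\times\R}'(v_n,t_n)\right\|\le \f{8}{\sqrt{n}},\quad \min_{t\in[0,1]}\|(v_n,t_n)-(\gamma_n(t),0)\|_{E\times\R}\le \f{2}{\sqrt{n}}.
$$
The crucial observation, and the heart of the whole argument, is that because the $\R$-component of $\tilde{\gamma}_n$ is identically zero, the distance estimate forces $|t_n|\le 2/\sqrt{n}\to 0$. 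Setting $u_n:=\beta(v_n,t_n)\in\mathcal{S}_c$, we obtain $\Phi(u_n)=\tilde{\Phi}(v_n,t_n)\to M(c)>0$; the Pohozaev decay $\mathcal{P}(u_n)\to 0$ follows from \eqref{H24}; and since $e^{|t_n|}\to 1$ stays bounded, the estimate \eqref{H25} yields $\|\Phi|_{\mathcal{S}_c}'(u_n)\|\le e^{|t_n|}\cdot 8/\sqrt{n}\to 0$.

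The main obstacle, and the reason the augmented space approach is needed, is exactly the control on $t_n$: without choosing test paths of the special form $(\gamma_n(t),0)$, the dilation parameters could drift and the $e^{|t_n|}$ factor in \eqref{H25} would destroy the critical point information when transferred back to $\Phi|_{\mathcal{S}_c}$. Thanks to the flexibility in selecting minimizing paths and the fact that the minimax levels on $\mathcal{S}_c$ and on $\mathcal{S}_c\times\R$ coincide, this difficulty is resolved cleanly, producing a Palais--Smale sequence for $\Phi|_{\mathcal{S}_c}$ that additionally satisfies the Pohozaev-type condition.
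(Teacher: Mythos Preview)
Your proof is correct and follows essentially the same approach as the paper: define the augmented path class on $\mathcal{S}_c\times\R$, show the two minimax levels coincide, verify the mountain-pass gap \eqref{F40}, apply Corollary~\ref{cor 2.15} to paths of the form $(\gamma_n(t),0)$, and transfer back via $u_n=\beta(v_n,t_n)$ using \eqref{H24}--\eqref{H25}. You are in fact more explicit than the paper about the key point that the distance estimate in Corollary~\ref{cor 2.15}(ii) forces $|t_n|\le 2/\sqrt{n}\to 0$, which is exactly what makes the factor $e^{|t_n|}$ in \eqref{H25} harmless.

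One small remark: when verifying $\tilde b<\kappa_c$ you invoke Lemma~\ref{lem 3.1}(i), but that lemma assumes (F1)--(F2) whereas Lemma~\ref{lem 3.2} only assumes (F0) together with \eqref{K5+}. It is cleaner (and this is what the paper does) to bypass Lemma~\ref{lem 3.1} entirely here: for any $\tilde\gamma\in\tilde\Gamma$ the projected path $\gamma=\beta\circ\tilde\gamma$ lies in $\Gamma_c$, so $\max\{\tilde\Phi(\tilde\gamma(0)),\tilde\Phi(\tilde\gamma(1))\}=\max\{\Phi(\gamma(0)),\Phi(\gamma(1))\}$ is bounded by the right-hand side of \eqref{K5+}, which is already assumed to be strictly less than $\kappa_c$. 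This is only a cosmetic issue; your argument goes through once the citation is adjusted.
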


 \begin{proof}
 Let
 \begin{equation}\label{Ga4}
   \tilde{\Gamma}_c:=\left\{\tilde{\gamma}\in \mathcal{C}([0,1], \mathcal{S}_c\times \R):
    \tilde{\gamma}(0)=(\tilde{\gamma}_1(0),0), \|\nabla \tilde{\gamma}_1(0)\|_2^2 \le \rho(c), \tilde{\Phi}(\tilde{\gamma}(1))<0\right\}
 \end{equation}
 and
 \begin{equation}\label{K6+}
   \tilde{M}(c):=\inf_{\tilde{\gamma}\in \tilde{\Gamma}_c}\max_{t\in [0, 1]}\tilde{\Phi}(\tilde{\gamma}(t)).
 \end{equation}
 For any $\tilde{\gamma}\in \tilde{\Gamma}_c$, it is easy to see that $\gamma=\beta\circ \tilde{\gamma}\in \Gamma_c$
 defined by \eqref{Ga3}. Then it follows from\eqref{Tbe} and \eqref{K5+} that
 \begin{eqnarray}\label{K7+}
   \inf_{\tilde{\gamma}\in \tilde{\Gamma}_c}\max_{t\in [0, 1]}\tilde{\Phi}(\tilde{\gamma}(t))
    &  =  & \inf_{\tilde{\gamma}\in \tilde{\Gamma}_c}\max_{t\in [0, 1]}\Phi(\beta(\tilde{\gamma}(t)))\nonumber\\
    & \ge & \inf_{\gamma\in \Gamma_c}\max_{t\in [0, 1]}\Phi(\gamma(t))\ge \kappa_c\nonumber\\
    &  >  & \sup_{\gamma\in \Gamma_c}\max\left\{\Phi(\gamma(0)), \Phi(\gamma(1))\right\}\nonumber\\
    & \ge & \sup_{\tilde{\gamma}\in \tilde{\Gamma}_c}\max\left\{\tilde{\Phi}(\tilde{\gamma}(0)), \tilde{\Phi}(\tilde{\gamma}(1))\right\}.
 \end{eqnarray}
 This shows that $\tilde{M}(c)\ge M(c)$ and \eqref{F40} holds.

 \par
   On the other hand, for any $\gamma\in \Gamma_c$, let $\tilde{\gamma}(t):=(\gamma(t),0)$. It is easy to
 verify that $\tilde{\gamma}\in \tilde{\Gamma}_c$ and $\Phi(\gamma(t))=\tilde{\Phi}(\tilde{\gamma}(t))$,
 and so, we trivially have $\tilde{M}(c)\le M(c)$. Thus $\tilde{M}(c) = M(c)$.

 \par
   For any $n\in \N$, \eqref{K5+} implies that there exists $\gamma_n\in \Gamma_c$ such that
 \begin{equation}\label{K8+}
   \max_{t\in [0,1]}\Phi(\gamma_n(t)) \le M(c)+\f{1}{n}.
 \end{equation}
 Set $\tilde{\gamma}_n(t):=(\gamma_n(t),0)$. Then applying Corollary \ref{cor 2.15} to $\tilde{\Phi}$, there exists a sequence $\{(v_n,t_n)\}\subset \mathcal{S}_c\times\R$ satisfying
 \begin{enumerate}[{\rm (i)}]
  \item $M(c)-\f{2}{n}\le \tilde{\Phi}(v_n,t_n)\le M(c)+\f{2}{n}$;
  \item $\min_{t\in [0,1]}\|(v_n,t_n)-(\gamma_n(t),0)\|_{E\times\R}\le \f{2}{\sqrt{n}}$;
  \item $\left\|\tilde{\Phi}|_{\mathcal{S}_c\times\R}'(v_n,t_n)\right\|\le \f{8}{\sqrt{n}}$.
 \end{enumerate}
 Let $u_n=\beta(v_n,t_n)$. It follows from \eqref{H24}, \eqref{H25} and (i)-(iii) that \eqref{PCe} holds.

 \end{proof}

 \par
   Similarly, we can obtain the following lemma.

 \begin{lemma}\label{lem 3.2+}
 Assume that {\rm (F0)} holds, and that there exist $\rho(c)>0$ and $\kappa_c>0$ such that \eqref{K5+}
 and \eqref{Ga3} hold. Then there exists a sequence $\{u_n\}\subset \mathcal{S}_c\cap H^1_{\mathrm{rad}}(\R^N)$ such that
 \begin{equation}\label{PCr}
    \Phi(u_n)\rightarrow M(c)>0, \ \   \Phi|_{\mathcal{S}_c}'(u_n) \rightarrow 0\ \
   \mbox{and}\ \ \mathcal{P}(u_n)\rightarrow 0.
 \end{equation}
 \end{lemma}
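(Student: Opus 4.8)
The plan is to transcribe the proof of Lemma~\ref{lem 3.2} into the radial category, the only real issue being to keep track of which objects preserve radial symmetry. I would re-run the scheme of Section~2 with the triple $E=H^{1}_{\mathrm{rad}}(\R^{N})$, $H=L^{2}_{\mathrm{rad}}(\R^{N})$ equipped with $\|u\|_{H}=c^{-1/2}\|u\|_{2}$, and $M=\mathcal{S}_{c}\cap H^{1}_{\mathrm{rad}}(\R^{N})$; this is legitimate because Lemma~\ref{lem 2.13}, Theorem~\ref{thm 2.14} and Corollary~\ref{cor 2.15} are stated for an abstract triple $E\hookrightarrow H\hookrightarrow E^{*}$ and use no special feature of $H^{1}(\R^{N})$. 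The structural point is that the dilation $\beta(v,t)(x)=e^{Nt/2}v(e^{t}x)$ maps $H^{1}_{\mathrm{rad}}(\R^{N})\times\R$ into $H^{1}_{\mathrm{rad}}(\R^{N})$, so $\tilde{\Phi}(v,t)=\Phi(\beta(v,t))$ restricts to a $\mathcal{C}^{1}$ functional on $\big(\mathcal{S}_{c}\cap H^{1}_{\mathrm{rad}}(\R^{N})\big)\times\R$, and the identities \eqref{DIv}, \eqref{H20}, \eqref{H21} together with the comparison inequalities \eqref{H24}, \eqref{H25} hold verbatim, since each of them is a pointwise identity or a tangent-space estimate that never leaves $H^{1}_{\mathrm{rad}}(\R^{N})$.

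Next I would check that the mountain-pass geometry of Lemma~\ref{lem 3.1} persists under the radial restriction. Its proof uses only the Gagliardo--Nirenberg inequality \eqref{GN}, the Sobolev inequality \eqref{Sob} and elementary estimates on $F$ coming from (F1)--(F2), all valid on $H^{1}_{\mathrm{rad}}(\R^{N})$ with the \emph{same} constants $\rho(c)$, $\kappa_{c}$. Hence, setting
\[
 \Gamma_{c}^{\mathrm{rad}}:=\left\{\gamma\in\mathcal{C}\big([0,1],\mathcal{S}_{c}\cap H^{1}_{\mathrm{rad}}(\R^{N})\big):\ \|\nabla\gamma(0)\|_{2}^{2}\le\rho(c),\ \Phi(\gamma(1))<0\right\},
\]
the chain \eqref{K5+} holds with $\Gamma_{c}$ replaced by $\Gamma_{c}^{\mathrm{rad}}$; in particular the hypothesis \eqref{F40} of Theorem~\ref{thm 2.14} is met by $\tilde{\varphi}=\tilde{\Phi}$ on $M\times\R$, and, passing between $\gamma$ and $(\gamma,0)$ exactly as in Lemma~\ref{lem 3.2}, the lifted radial min-max level coincides with the radial mountain-pass level, which is the quantity $M(c)$ in the statement.

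Then I would conclude as in Lemma~\ref{lem 3.2}: for each $n$ pick $\gamma_{n}\in\Gamma_{c}^{\mathrm{rad}}$ with $\max_{t\in[0,1]}\Phi(\gamma_{n}(t))\le M(c)+\f{1}{n}$, set $\tilde{\gamma}_{n}=(\gamma_{n},0)$, and apply Corollary~\ref{cor 2.15} to $\tilde{\Phi}$ to obtain $\{(v_{n},t_{n})\}\subset\big(\mathcal{S}_{c}\cap H^{1}_{\mathrm{rad}}(\R^{N})\big)\times\R$ with $\tilde{\Phi}(v_{n},t_{n})\to M(c)$, $\min_{t}\|(v_{n},t_{n})-(\gamma_{n}(t),0)\|_{E\times\R}\to0$ and $\|\tilde{\Phi}|_{M\times\R}'(v_{n},t_{n})\|\to0$. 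Since the base path $\tilde{\gamma}_{n}$ has vanishing second component, the second property forces $|t_{n}|\to0$, so $e^{|t_{n}|}$ stays bounded; setting $u_{n}:=\beta(v_{n},t_{n})\in\mathcal{S}_{c}\cap H^{1}_{\mathrm{rad}}(\R^{N})$, the inequalities \eqref{H24} and \eqref{H25} then give $\mathcal{P}(u_{n})\to0$ and $\Phi|_{\mathcal{S}_{c}}'(u_{n})\to0$, while $\Phi(u_{n})=\tilde{\Phi}(v_{n},t_{n})\to M(c)\ge\kappa_{c}>0$, which is \eqref{PCr}.

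The step I expect to require the most care is the identification of the radial min-max level with $M(c)$: restricting the admissible paths to radial functions can only raise the mountain-pass value, so one needs the reverse inequality. In the present framework this is exactly the content of the statement once $E$ is taken to be $H^{1}_{\mathrm{rad}}(\R^{N})$ (which is allowed in Theorem~\ref{thm1.1}); alternatively one can invoke a Schwarz-symmetrization argument along the paths, and in any case the subsequent compactness analysis uses only that $\{u_{n}\}\subset\mathcal{S}_{c}\cap H^{1}_{\mathrm{rad}}(\R^{N})$ is a (PS) sequence for $\Phi|_{\mathcal{S}_{c}}$ at a positive mountain-pass-type level with $\mathcal{P}(u_{n})\to0$. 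Everything else is a mechanical copy of the argument for Lemma~\ref{lem 3.2}.
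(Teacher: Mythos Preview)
Your proposal is correct and follows exactly the approach the paper intends: the paper gives no separate proof of Lemma~\ref{lem 3.2+} beyond the single sentence ``Similarly, we can obtain the following lemma'', and you have faithfully supplied the details by re-running the argument of Lemma~\ref{lem 3.2} with $E=H^{1}_{\mathrm{rad}}(\R^{N})$, which the paper explicitly allows at the start of Section~3. Your final paragraph worrying about the identification of the radial and non-radial mountain-pass levels is unnecessary: in the paper's convention $\mathcal{S}_{c}$, $\Gamma_{c}$ and hence $M(c)$ in \eqref{K5+}--\eqref{Ga3} are defined relative to whatever $E$ has been fixed, so once $E=H^{1}_{\mathrm{rad}}(\R^{N})$ the quantity $M(c)$ in the statement \emph{is} the radial level by definition, and no comparison with the non-radial level is required (you correctly note this yourself).
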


\begin{lemma}\label{lem 3.3}{\rm\cite{Je-NA}}
 Assume that {\rm (F0)} holds. If there exist $u\in H^1(\R^N)$ and $\lambda\in \R$ such that
 \begin{equation}\label{PB1}
   -\Delta u-\lambda u=f(u),\ \ x\in \R^N,
 \end{equation}
 then $\mathcal{P}(u)=0$, where $\mathcal{P}$ is defined by \eqref{Ju}.
 \end{lemma}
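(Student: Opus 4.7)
The plan is to derive $\mathcal{P}(u)=0$ by combining two identities obtained by testing the equation against two different choices of test functions: the solution $u$ itself, and the dilation-generator $x\cdot\nabla u$.

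\textbf{Step 1 (Energy identity).} Under (F0) we have the growth bound $|f(t)|\le C(|t|+|t|^{2^*-1})$, so $f(u)u\in L^1(\R^N)$ when $u\in H^1(\R^N)$. Multiplying \eqref{PB1} by $u$ and integrating yields
\begin{equation*}
  \|\nabla u\|_2^2-\lambda\|u\|_2^2=\int_{\R^N}f(u)u\,\mathrm{d}x.
\end{equation*}

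\textbf{Step 2 (Pohozaev identity).} I would test \eqref{PB1} against $x\cdot\nabla u$ localized by a cutoff. First, by elliptic regularity (Brezis--Kato bootstrapping applied to the subcritical-growth right-hand side $f(u)+\lambda u$) one obtains $u\in W^{2,p}_{\mathrm{loc}}(\R^N)$ for all $p<\infty$, together with $u(x)\to 0$ at infinity, so $x\cdot\nabla u$ is a legitimate local test function. Choosing a smooth cutoff $\psi_R(x)=\psi(x/R)$ with $\psi\equiv 1$ on $B_1$ and support in $B_2$, and integrating by parts in $\int[-\Delta u-\lambda u-f(u)]\,\psi_R(x\cdot\nabla u)\,\mathrm{d}x=0$, one computes
\begin{equation*}
  \int(-\Delta u)(x\cdot\nabla u)\,\mathrm{d}x=\tfrac{N-2}{2}\|\nabla u\|_2^2,\quad
  \int u(x\cdot\nabla u)\,\mathrm{d}x=-\tfrac{N}{2}\|u\|_2^2,\quad
  \int f(u)(x\cdot\nabla u)\,\mathrm{d}x=-N\!\int F(u)\,\mathrm{d}x,
\end{equation*}
after letting $R\to\infty$. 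The remainder terms involving $\nabla\psi_R$ vanish in the limit using $u,\nabla u\in L^2$ and $F(u)\in L^1$ (a consequence of (F0)). This produces
\begin{equation*}
  \tfrac{N-2}{2}\|\nabla u\|_2^2-\tfrac{\lambda N}{2}\|u\|_2^2+N\!\int_{\R^N}F(u)\,\mathrm{d}x=0.
\end{equation*}

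\textbf{Step 3 (Elimination of $\lambda$).} Using Step 1 to substitute $\lambda\|u\|_2^2=\|\nabla u\|_2^2-\int f(u)u\,\mathrm{d}x$ into Step 2, the $\lambda$-terms cancel and the remaining identity simplifies directly to
\begin{equation*}
  \|\nabla u\|_2^2=\tfrac{N}{2}\int_{\R^N}[f(u)u-2F(u)]\,\mathrm{d}x,
\end{equation*}
which is exactly $\mathcal{P}(u)=0$ in view of definition \eqref{Ju}.

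The only nonroutine step is Step 2: one must justify the integration by parts for a weak $H^1$-solution and show the boundary/cutoff contributions vanish. This is the standard difficulty in the classical Berestycki--Lions derivation of the Pohozaev identity in $\R^N$, and it is handled cleanly by the elliptic regularity plus truncation argument sketched above; once this is in place, Step 1 and Step 3 are purely algebraic.
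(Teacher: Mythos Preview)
The paper gives no proof of this lemma, simply citing Jeanjean \cite{Je-NA}; your approach is exactly the classical Pohozaev argument one finds there (energy identity plus testing against $x\cdot\nabla u$, then eliminating $\lambda$), so the strategy is correct and matches the cited source.

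However, Step~2 contains sign slips that make the displayed identities inconsistent with the claimed conclusion. The correct computation is
\[
  \int_{\R^N}(-\Delta u)(x\cdot\nabla u)\,\mathrm{d}x=-\f{N-2}{2}\|\nabla u\|_2^2,
\]
not $+\tfrac{N-2}{2}\|\nabla u\|_2^2$; combining this with your (correct) formulas for the other two terms and the equation $-\Delta u-\lambda u-f(u)=0$ yields
\[
  \f{N-2}{2}\|\nabla u\|_2^2-\f{\lambda N}{2}\|u\|_2^2-N\int_{\R^N}F(u)\,\mathrm{d}x=0,
\]
i.e.\ the sign in front of $N\!\int F(u)$ should be $-$, not $+$. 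With your version of Step~2, substituting $\lambda\|u\|_2^2=\|\nabla u\|_2^2-\int f(u)u$ gives $\|\nabla u\|_2^2=\tfrac{N}{2}\int[f(u)u+2F(u)]$, which is \emph{not} $\mathcal{P}(u)=0$. Once the signs are corrected, Step~3 goes through exactly as you describe and produces $\mathcal{P}(u)=0$.
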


 \begin{proof}[Proof of Theorem {\rm\ref{thm1.1}}] Let $E=H^1_{\mathrm{rad}}(\R^N)$.
  By Lemma \ref{lem 3.2+}, there exists a sequence $\{u_n\}\subset \mathcal{S}_c\cap H^1_{\mathrm{rad}}(\R^N)$
  such that
 \begin{equation}\label{K13}
   \|u_n\|_2^2=c,\ \ \Phi(u_n) \rightarrow M(c),\ \ \ \  \Phi|_{\mathcal{S}_c}'(u_n) \rightarrow 0,\ \ \mathcal{P}(u_n)\rightarrow 0.
 \end{equation}
 From \eqref{Ph}, \eqref{Ju} and \eqref{K13}, one has
 \begin{equation}\label{Phn}
  M(c)+o(1)=\f{1}{2}\|\nabla u_n\|_2^2-\int_{\R^N}F(u_n)\mathrm{d}x
 \end{equation}
 and
 \begin{equation}\label{Pun}
   o(1)= \|\nabla u_n\|_2^2-\f{N}{2}\int_{\R^N}[f(u_n)u_n-2F(u_n)]\mathrm{d}x.
 \end{equation}
 Combining \eqref{Phn} with \eqref{Pun}, we get
 \begin{eqnarray}\label{K23}
   M(c)+1
     & \ge & \f{N}{4}\int_{\R^N}\left[f(u_n)u_n-\left(2+\f{4}{N}\right)F(u_n)\right]\mathrm{d}x.
 \end{eqnarray}
 Now, we prove that $\{\|\nabla u_n\|_2\}$ is bounded. Arguing by contradiction, suppose that
 $\|\nabla u_n\|_2\to\infty$. Let $v_n:=\f{u_n}{\|\nabla u_n\|_2}$. Then $\|\nabla v_n\|_2=1$,
 and $\|v_n\|_2\to 0$ due to \eqref{K13}. Set $\kappa'=\f{\kappa}{\kappa-1}$. Then $N-(N-2)\kappa'>0$, it follows
 from \eqref{GN} that
 \begin{equation}\label{GN0}
    \|v_n\|_{2\kappa'}^{2\kappa'}\le C_1\|v_n\|_2^{N-(N-2)\kappa'}\|\nabla v_n\|_2^{N(\kappa'-1)}=o(1).
 \end{equation}
 By (F2) and (F3), there exist $R_0>0$ and $C_2>0$ such that
 \begin{equation}\label{K24}
    \left|\f{f(t)t-2F(t)}{t^2}\right|^{\kappa}\le C_2\left[f(t)t-\left(2+\f{4}{N}\right)F(t)\right],
    \ \ \forall \ |t|\ge R_0.
 \end{equation}
 Set
 \begin{equation*}\label{Ome}
   \Omega_n:=\left\{x\in\R^N: |u_n(x)|\le R_0\right\}.
 \end{equation*}
 Then by (F1) and (F2), we have
 \begin{equation}\label{BF1}
   \int_{\Omega_n}\f{|f(u_n)u_n-2F(u_n)|}{u_n^2}v_n^2\mathrm{d}x
   \le C_3\|v_n\|_2^2=o(1).
 \end{equation}
 Moreover, from \eqref{K23}, \eqref{GN0}, \eqref{K24} and the H\"older inequality, we have
 \begin{eqnarray}\label{BF2}
  &     & \int_{\R^N\setminus\Omega_n}\f{f(u_n)u_n-2F(u_n)}{u_n^2}v_n^2\mathrm{d}x\nonumber\\
  & \le & \left[\int_{\R^N\setminus\Omega_n}\left|\f{f(u_n)u_n-2F(u_n)}{u_n^2}\right|^{\kappa}
           \mathrm{d}x\right]^{1/\kappa}\left(\int_{\R^N\setminus\Omega_n}|v_n|^{2\kappa'}
           \mathrm{d}x\right)^{1/\kappa'}\nonumber\\
  & \le & C_2^{1/\kappa}\left(\int_{\R^N\setminus\Omega_n}\left[f(u_n)u_n-\left(2+\f{4}{N}\right)F(u_n)\right]
           \mathrm{d}x\right)^{1/\kappa}\|v_n\|_{2\kappa'}^2\nonumber\\
  &  =  & o(1).
 \end{eqnarray}
 Thus, it follows from \eqref{Pun}, \eqref{BF1} and \eqref{BF2} that
 \begin{eqnarray*}
 \f{2}{N}+o(1)
   &  =  & \int_{\R^N}\f{f(u_n)u_n-2F(u_n)}{u_n^2}v_n^2\mathrm{d}x\nonumber\\
   &  =  & \int_{\Omega_n}\f{f(u_n)u_n-2F(u_n)}{u_n^2}v_n^2\mathrm{d}x
            +\int_{\R^N\setminus\Omega_n}\f{f(u_n)u_n-2F(u_n)}{u_n^2}v_n^2\mathrm{d}x\nonumber\\
   &  =  & o(1),
 \end{eqnarray*}
 which is a contradiction. Hence, $\{\|u_n\|_E\}$ is bounded.
 By Lemma \ref{lem 2.2}, one has
 \begin{equation}\label{K25}
   \|u_n\|_2^2=c,\ \ \Phi(u_n) \rightarrow M(c),
   \ \ \Phi'(u_n)-\lambda_nu_n\rightarrow 0,
 \end{equation}
 where
 \begin{equation}\label{K26}
   \lambda_n=\f{1}{\|u_n\|_2^2}\langle\Phi'(u_n),u_n\rangle
     =\f{1}{c}\left[\|\nabla u_n\|_2^2-\int_{\R^N}f(u_n)u_n\mathrm{d}x\right].
 \end{equation}
 Since $\{\|u_n\|_E\}$ is bounded, it follows from (F1), (F2) and \eqref{K26} that $\{|\lambda_n|\}$ is
 also bounded. Thus, we may thus assume, passing to a subsequence if necessary, that $\lambda_n\rightarrow \lambda_c$, $u_n\rightharpoonup \bar{u}$ in $E$, $u_n\to \bar{u}$ in $L^s(\R^N)$ for $s\in (2,2^*)$ and $u_n\rightarrow \bar{u}$ a.e. on $\R^N$. By (F1), (F2) and a standard argument, we can deduce
 \begin{equation}\label{K30}
   \lim_{n\to\infty}\int_{\R^N}f(u_n)\phi\mathrm{d}x= \int_{\R^N}f(\bar{u})\phi\mathrm{d}x,
    \ \ \forall \ \phi\in E,
 \end{equation}
 \begin{equation}\label{K31}
   \lim_{n\to\infty}\int_{\R^N}f(u_n)u_n\mathrm{d}x= \int_{\R^N}f(\bar{u})\bar{u}\mathrm{d}x
 \end{equation}
 and
 \begin{equation}\label{K32}
   \lim_{n\to\infty}\int_{\R^N}F(u_n)\mathrm{d}x= \int_{\R^N}F(\bar{u})\mathrm{d}x.
 \end{equation}
 Hence, from \eqref{Phn}, \eqref{Pun}, \eqref{K31} and \eqref{K32}, we deduce
 \begin{eqnarray}\label{K33}
   M(c)
     &  =  & \lim_{n\to\infty}\left\{\f{N}{4}\int_{\R^N}\left[f(u_n)u_n
              -\left(2+\f{4}{N}\right)F(u_n)\right]\mathrm{d}x\right\}\nonumber\\
     &  =  & \f{N}{4}\int_{\R^N}\left[f(\bar{u})\bar{u}
              -\left(2+\f{4}{N}\right)F(\bar{u})\right]\mathrm{d}x.
 \end{eqnarray}
 This, together with (F2), shows that $\bar{u}\ne 0$.
 It follows from \eqref{K25}, \eqref{K30} and the fact that $\lambda_n\rightarrow \lambda_c$ and $u_n\rightharpoonup
 \bar{u}$ in $E$ that
 \begin{equation}\label{K34}
   \Phi'(\bar{u})-\lambda_c\bar{u}=0.
 \end{equation}
 Hence, \eqref{K34} and Lemma \ref{lem 3.3} yield
 \begin{eqnarray}\label{K38}
   \|\nabla \bar{u}\|_2^2-\lambda_c\|\bar{u}\|_2^2
             -\int_{\R^N}f(\bar{u})\bar{u}\mathrm{d}x=0
 \end{eqnarray}
 and
 \begin{eqnarray}\label{K40}
   \|\nabla \bar{u}\|_2^2-\f{N}{2}\int_{\R^N}[f(\bar{u})\bar{u}-2F(\bar{u})]\mathrm{d}x=0.
 \end{eqnarray}
 From (F2), \eqref{K38} and \eqref{K40}, one has
 \begin{eqnarray}\label{K42}
   -\lambda_c\|\bar{u}\|_2^2
             =\int_{\R^N}\left[NF(\bar{u})-\f{N-2}{2}f(\bar{u})\bar{u}\right]\mathrm{d}x>0,
 \end{eqnarray}
 which implies that $\lambda_c<0$. Now from \eqref{Ph}, \eqref{K25}, \eqref{K30}, \eqref{K32} and \eqref{K34}, one has
 \begin{eqnarray}\label{K22}
   0
     &  =  & \lim_{n\to\infty}\left[\langle\Phi'(u_n)-\lambda_nu_n,u_n-\bar{u}\rangle
               -\langle\Phi'(\bar{u})-\lambda_c\bar{u},u_n-\bar{u}\rangle\right]\nonumber\\
     &  =  & \lim_{n\to\infty}\left\{\int_{\R^N}\left(|\nabla (u_n-\bar{u})|^2
              -\lambda_c|u_n-\bar{u}|^2\right)\mathrm{d}x
               -\int_{{\R}^2}[f(u_n)-f(\bar{u})](u_n-\bar{u})\mathrm{d}x\right\}\nonumber\\
     &  =  & \lim_{n\to\infty}\int_{\R^N}\left(|\nabla (u_n-\bar{u})|^2
              -\lambda_c|u_n-\bar{u}|^2\right)\mathrm{d}x.
 \end{eqnarray}
 It follows that $u_n\rightarrow \bar{u}$ in $E$, and so $\|\bar{u}\|_2^2=c$, $\Phi|_{\mathcal{S}_c}'(\bar{u})=0$
 and $\Phi(\bar{u})=M(c)$.

 \par
   Next, we prove that \eqref{Pa} has a ground state solution on $\mathcal{S}_c$. To this end, let
 \begin{equation}\label{Kmd}
   \mathcal{K}_c:=\left\{u\in \mathcal{S}_c\cap H_{\mathrm{rad}}^1 (\R^N): \ \Phi|_{\mathcal{S}_c}'(u)=0\right\},\ \ \bar{m}(c):=\inf_{\mathcal{K}_c}\Phi.
 \end{equation}
 Then $\bar{u}\in \mathcal{K}_c$. It follows from \eqref{Ph}, \eqref{Ju} and (F2) that
 \begin{eqnarray}\label{I35}
    \Phi(u)
    &  =  & \Phi(u)-\f{1}{2}\mathcal{P}(u)\nonumber\\
    &  =  & \f{N}{4}\int_{\R^N}\left[f(u)u-\left(2+\f{4}{N}\right)F(u)\right]\mathrm{d}x\ge 0,
            \ \ \forall \ u\in \mathcal{K}_c.
 \end{eqnarray}
 Thus $\bar{m}(c)\ge 0$. Let $\{u_n\}\subset \mathcal{K}_c$ such that
 \begin{equation}\label{I36}
   \Phi(u_n)\rightarrow \bar{m}(c),\ \ \Phi|_{\mathcal{S}_c}'(u_n)=0.
 \end{equation}
 From \eqref{Ph}, \eqref{Ju}, \eqref{I36} and Lemma \ref{lem 3.3}, one has
 \begin{equation}\label{Phn1}
  \bar{m}(c)+o(1)=\f{1}{2}\|\nabla u_n\|_2^2-\int_{\R^N}F(u_n)\mathrm{d}x
 \end{equation}
 and
 \begin{equation}\label{Pun1}
   0= \|\nabla u_n\|_2^2-\f{N}{2}\int_{\R^N}[f(u_n)u_n-2F(u_n)]\mathrm{d}x.
 \end{equation}
 Combining \eqref{Phn1} with \eqref{Pun1}, we have
 \begin{eqnarray}\label{I38}
   \bar{m}(c)+o(1)
     &  =  & \f{N}{4}\int_{\R^N}\left[f(u_n)u_n
              -\left(2+\f{4}{N}\right)F(u_n)\right]\mathrm{d}x.
 \end{eqnarray}
 From (F1), (F2), \eqref{Sob}, \eqref{GN} and \eqref{Pun1}, we get
 \begin{eqnarray}\label{I39}
   \|\nabla u_n\|_2^2
    &  =  & \f{N}{2}\int_{\R^N}[f(u_n)u_n-2F(u_n)]\mathrm{d}x\nonumber\\
    & \le & \int_{\R^N}\left(\f{1}{2}c^{-2/N}\mathcal{C}_{N,2+4/N}^{-2-4/N}|u_n|^{2+\f{4}{N}}+C_4|u_n|^{2^*}\right)\mathrm{d}x\nonumber\\
    & \le & \f{1}{2}\|\nabla u_n\|_2^2+C_5\|\nabla u_n\|_2^{2^*},
 \end{eqnarray}
 which, together with the fact that $u_n\ne 0$, yields that $\|\nabla u_n\|_2^2\ge C_6$.
 Let $v_n:=\f{u_n}{\|\nabla u_n\|_2}$. Then $\|\nabla v_n\|_2=1$, and $\|v_n\|_2^2\le c/C_6$. Set $\kappa'=\f{\kappa}{\kappa-1}$.
 By \eqref{GN}, one has
 \begin{equation}\label{I42}
    \|v_n\|_{2\kappa'}^{2\kappa'}\le C_7\|v_n\|_2^{N-(N-2)\kappa'}\|\nabla v_n\|_2^{N(\kappa'-1)}
      \le C_8\min\left\{1,\|\nabla u_n\|_2^{-N+(N-2)\kappa'}\right\}.
 \end{equation}
 Hence, it follows from (F3$'$), \eqref{Pun1}, \eqref{I38}, \eqref{I42} and the H\"older inequality that
 \begin{eqnarray*}
  \f{2}{N}
   &  =  & \int_{\R^N}\f{f(u_n)u_n-2F(u_n)}{u_n^2}v_n^2\mathrm{d}x\nonumber\\
   & \le & \left[\int_{\R^N}\left(\f{f(u_n)u_n-2F(u_n)}
           {u_n^2}\right)^{\kappa}\mathrm{d}x\right]^{1/\kappa}\|v_n\|_{2\kappa'}^2\nonumber\\
   & \le & \mathcal{C}_0^{1/\kappa}\left\{\int_{\R^N}\left[Nf(u_n)u_n-(2N+4)F(u_n)\right]
           \mathrm{d}x\right\}^{1/\kappa}\|v_n\|_{2\kappa'}^2\nonumber\\
   & \le & (4\mathcal{C}_0)^{1/\kappa}\left[C_8\min\left\{1,\|\nabla u_n\|_2^{-N+(N-2)\kappa'}\right\}
            \right]^{1/\kappa'}(\bar{m}(c)+o(1))^{1/\kappa}.
 \end{eqnarray*}
 This shows that $\bar{m}(c)>0$ and $\{\|\nabla u_n\|_2\}$ is bounded. By a standard argument, we can prove
 that there exists $\tilde{u}\in \mathcal{K}_c$ such that $\Phi(\tilde{u})=\bar{m}(c)$.
 \end{proof}

{\section{Normalized solutions for mixed problem \eqref{Pa1} with Sobolev  critical exponent}}
 \setcounter{equation}{0}

 \par
  In this section, we let $f(t)=\mu |t|^{q-2}t+|t|^{2^*-2}t, \gamma_q:=N(q-2)/2q$, and study the
  Schr\"odinger equation \eqref{Pa1} with Sobolev  critical exponent and mixed dispersion.

 \vskip4mm
{\subsection{$L^2$-subcritical perturbation}}
 \setcounter{equation}{0}

 \par
   In this subsection, we always assume that $2< q < 2+\f{4}{N} $ and shall prove Theorem \ref{thm 1.2}.

 \par
   Let $N\ge 3$ and let $c_0$ and $\rho_0$ are given in \cite{JJLV,JL-MA}, where $\rho_0$ depending only on
 $c_0 > 0$ but not on $c \in (0, c_0)$. As in \cite{JL-MA}, we define the sets $V(c)$ and $\partial V(c)$
 as follows:
 \begin{equation}\label{Vc}
   V(c):=\{u\in S_c: \|\nabla u\|_2^2<\rho_0\},\ \ \partial V(c):=\{u\in S_c: \|\nabla u\|_2^2=\rho_0\}.
 \end{equation}
 For any $\mu>0$ and any $c\in (0,c_0)$, the set $V(c) \subset  \mathcal{S}_c$ having the property:
 \begin{equation}\label{mmu}
   m_{\mu}(c)=\inf_{u\in V(c)}\Phi_{\mu}(u)<0<\inf_{u\in \partial V(c)}\Phi_{\mu}(u).
 \end{equation}

 \begin{lemma}\label{lem 4.3}{\rm\cite[Proposition 2.1]{JL-MA}}
 Let $N\ge 3$. For any $\mu > 0$ and any $c\in (0,c_0)$, $m_{\mu}(c)$ is reached by a positive, radially symmetric non-increasing function, denoted $u_c\in V(c)\setminus (\partial V(c))$ that satisfies, for a $\lambda_c<0$,
 \begin{equation}\label{Pa2}
     -\Delta u_c-\mu |u_c|^{q-2}u_c-|u_c|^{2^*-2}u_c=\lambda_cu_c,\ \ \ \   x\in \R^N.
 \end{equation}
 \end{lemma}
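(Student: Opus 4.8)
The plan is to realize $u_c$ as a \emph{compact} minimizer of $\Phi_\mu$ over $V(c)$, carrying out the whole argument inside $E:=H^1_{\mathrm{rad}}(\R^N)$ so that the compact embedding $H^1_{\mathrm{rad}}(\R^N)\hookrightarrow L^s(\R^N)$, $s\in(2,2^*)$, is available at the end. First I would take a minimizing sequence for $m_\mu(c)=\inf_{V(c)}\Phi_\mu$ and replace each term by its Schwarz symmetrization: by the P\'olya--Szeg\H{o} inequality the Dirichlet energy does not increase while $\|\cdot\|_2,\|\cdot\|_q,\|\cdot\|_{2^*}$ are unchanged, so $\Phi_\mu$ does not increase, the constraint $\|u\|_2^2=c$ and membership in $V(c)$ are preserved, and $\inf_{V(c)}\Phi_\mu=\inf_{V(c)\cap H^1_{\mathrm{rad}}}\Phi_\mu$. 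Thus we may take a minimizing sequence $\{w_n\}\subset V(c)\cap H^1_{\mathrm{rad}}(\R^N)$ of nonnegative, radial, non-increasing functions; it is bounded in $E$ since $\|w_n\|_2^2=c$ and $\|\nabla w_n\|_2^2<\rho_0$.

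Next I would pass to a Palais--Smale sequence that does not leave $V(c)$. By \eqref{mmu}, $m_\mu(c)<0<\inf_{\partial V(c)}\Phi_\mu$, and by the Gagliardo--Nirenberg and Sobolev inequalities, for $2<q<2+\tfrac{4}{N}$ and $c<c_0$, $\Phi_\mu$ is bounded below by a positive constant on $\{u\in\mathcal S_c:\rho_0\le\|\nabla u\|_2^2\le\rho_0+\delta_0\}$ for some $\delta_0>0$. Hence, with $A:=\{u\in E:\|\nabla u\|_2^2<\rho_0\}$ and $\rho>0$ small enough that every $u\in M\cap(A_\rho\setminus A)$ satisfies $\rho_0\le\|\nabla u\|_2^2\le\rho_0+\delta_0$, the gap condition \eqref{D50} of Theorem \ref{thm 2.5} holds. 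Applying Corollary \ref{cor 2.6} in $E$ with $M=\mathcal S_c\cap H^1_{\mathrm{rad}}(\R^N)$ yields $\{u_n\}\subset M$ with $\Phi_\mu(u_n)\to m_\mu(c)$, $\|u_n-w_n\|\to0$ and $\|\Phi_\mu|_{\mathcal S_c}'(u_n)\|\to0$; in particular $\{u_n\}$ is bounded and, for $n$ large, lies strictly inside $V(c)$. By Lemma \ref{lem 2.2}, $\Phi_\mu'(u_n)-\lambda_n u_n\to0$ in $E^*$ with $\lambda_n=\tfrac1c\langle\Phi_\mu'(u_n),u_n\rangle$ bounded, so along a subsequence $\lambda_n\to\lambda_c$ and $u_n\rightharpoonup u_c$ in $E$, with $u_c$ radial, nonnegative and non-increasing.

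Then comes the compactness step. Since $u_n\to u_c$ in $L^q$, passing to the limit shows $(u_c,\lambda_c)$ solves $-\Delta u_c-\lambda_c u_c=\mu|u_c|^{q-2}u_c+|u_c|^{2^*-2}u_c$. If $u_c=0$ then $\|u_n\|_q\to0$, so $\Phi_\mu(u_n)\ge\tfrac12\|\nabla u_n\|_2^2-\tfrac1{2^*}\mathcal S^{-2^*/2}\|\nabla u_n\|_2^{2^*}+o(1)\ge o(1)$, because $\|\nabla u_n\|_2^2<\rho_0<\mathcal S^{N/2}$ for $c<c_0$; this contradicts $\Phi_\mu(u_n)\to m_\mu(c)<0$, so $u_c\neq0$. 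Combining the Nehari identity $\|\nabla u_c\|_2^2=\lambda_c\|u_c\|_2^2+\mu\|u_c\|_q^q+\|u_c\|_{2^*}^{2^*}$ with the Pohozaev identity of the limit equation gives $-\lambda_c\|u_c\|_2^2=\tfrac{2N-q(N-2)}{2q}\,\mu\|u_c\|_q^q>0$ since $q<2^*$, hence $\lambda_c<0$. Finally, with $v_n:=u_n-u_c\rightharpoonup0$, the Brezis--Lieb lemma together with $\Phi_\mu'(u_n)-\lambda_n u_n\to0$ and $\Phi_\mu'(u_c)-\lambda_c u_c=0$ yields $\|\nabla v_n\|_2^2-\lambda_c\|v_n\|_2^2-\|v_n\|_{2^*}^{2^*}\to0$; since $-\lambda_c>0$ and $\|v_n\|_{2^*}^{2^*}\le\mathcal S^{-2^*/2}\|\nabla v_n\|_2^{2^*}$, either $v_n\to0$ in $E$ or $\liminf\|\nabla v_n\|_2^2\ge\mathcal S^{N/2}$, and the latter is impossible because $\|\nabla v_n\|_2^2\le\|\nabla u_n\|_2^2<\rho_0<\mathcal S^{N/2}$. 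Thus $u_n\to u_c$ in $E$, so $\|u_c\|_2^2=c$, $\|\nabla u_c\|_2^2\le\rho_0$ and $\Phi_\mu(u_c)=m_\mu(c)$; since $m_\mu(c)<\inf_{\partial V(c)}\Phi_\mu$ we must have $\|\nabla u_c\|_2^2<\rho_0$, i.e. $u_c\in V(c)\setminus\partial V(c)$, and $u_c>0$ follows from $u_c\ge0$, $u_c\not\equiv0$, elliptic regularity and the strong maximum principle.

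The step I expect to be the main obstacle is this last compactness argument: one must exclude simultaneously the escape of a concentrating Sobolev bubble and the loss of $L^2$-mass in the weak limit, and both are prevented only because the a priori bound $\|\nabla u_n\|_2^2<\rho_0$ keeps the minimizing flow below the critical Sobolev level $\tfrac1N\mathcal S^{N/2}$ — which is precisely where the restriction $c\in(0,c_0)$, equivalently the explicit choice \eqref{rhod} of $\rho_0$, is indispensable.
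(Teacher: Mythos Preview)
The paper does not give its own proof of this lemma: it is quoted verbatim from \cite[Proposition 2.1]{JL-MA}. Your sketch is a correct reconstruction of that result, and it is nice that you route the Palais--Smale step through the paper's own Corollary~\ref{cor 2.6} rather than through a bare Ekeland argument as in \cite{JL-MA}; the effect is the same, since the point is only to produce a constrained (PS) sequence that stays inside $V(c)$.

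Two remarks on details. First, the inequality $\rho_0<\mathcal S^{N/2}$ that you invoke twice (to rule out $u_c=0$ and then to exclude a Sobolev bubble in $v_n=u_n-u_c$) is true but not stated in the present paper; it comes from the explicit formulas \eqref{rhod}--\eqref{c0d} in \cite{JJLV,JL-MA}, and you should cite it rather than assert it. Second, when you write $\|\nabla v_n\|_2^2\le\|\nabla u_n\|_2^2$, what you actually have from weak convergence is $\|\nabla v_n\|_2^2=\|\nabla u_n\|_2^2-\|\nabla u_c\|_2^2+o(1)$, so the bound holds only up to $o(1)$; this is still enough to contradict $\liminf\|\nabla v_n\|_2^2\ge\mathcal S^{N/2}$ since $\rho_0<\mathcal S^{N/2}$ is strict.
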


 \begin{lemma}\label{lem 4.1}
 Let $N\ge 3$. For any $\mu > 0$ and any $c\in (0,c_0)$, there exists $\kappa_{\mu,c}>0$ such that
 \begin{equation}\label{Mu2}
   M_{\mu}(c):=\inf_{\gamma\in \Gamma_{\mu,c}}\max_{t\in [0, 1]}\Phi_{\mu}(\gamma(t))\ge \kappa_{\mu,c}
     >\sup_{\gamma\in \Gamma_{\mu,c}}\max\left\{\Phi_{\mu}(\gamma(0)), \Phi_{\mu}(\gamma(1))\right\},
 \end{equation}
 where
 \begin{equation}\label{Ga5}
   \Gamma_{\mu,c}=\left\{\gamma\in \mathcal{C}([0,1], \mathcal{S}_c\cap H^1_{\mathrm{rad}}(\mathbb{R}^N)): \gamma(0)=u_c, \Phi_{\mu}(\gamma(1))<2m_{\mu}(c)\right\}.
 \end{equation}
 \end{lemma}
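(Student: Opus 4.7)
The proof should hinge on combining the local minimum structure \eqref{mmu}--\eqref{Vc} with the fact that $2^{*}>2$ forces $\Phi_{\mu}$ to be unbounded below along $L^{2}$-preserving dilations, even though $\Phi_{\mu}$ admits a local minimizer $u_c$ inside $V(c)$.

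\textbf{Step 1: Non-emptiness of $\Gamma_{\mu,c}$.} For $\tau>0$, define the $L^{2}$-preserving dilation $(u_c)_\tau(x):=\tau^{N/2}u_c(\tau x)\in\mathcal{S}_c$. A direct computation yields
\begin{equation*}
 \Phi_{\mu}((u_c)_\tau)=\frac{\tau^{2}}{2}\|\nabla u_c\|_2^{2}-\frac{\mu\tau^{N(q-2)/2}}{q}\|u_c\|_q^{q}-\frac{\tau^{2^{*}}}{2^{*}}\|u_c\|_{2^{*}}^{2^{*}}.
\end{equation*}
Since $2^{*}>2>N(q-2)/2$ when $2<q<2+\frac{4}{N}$, the Sobolev-critical term dominates as $\tau\to+\infty$, so $\Phi_{\mu}((u_c)_\tau)\to-\infty$. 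Pick $T>1$ large enough that $\Phi_{\mu}((u_c)_T)<2m_{\mu}(c)$, and define $\gamma_0\in\mathcal{C}([0,1],\mathcal{S}_c)$ by $\gamma_0(s):=(u_c)_{1+s(T-1)}$. Then $\gamma_0(0)=u_c$ and $\Phi_{\mu}(\gamma_0(1))<2m_{\mu}(c)$, so $\gamma_0\in\Gamma_{\mu,c}$ and $M_{\mu}(c)$ is well-defined.

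\textbf{Step 2: Every path in $\Gamma_{\mu,c}$ crosses $\partial V(c)$.} Set $\kappa_{\mu,c}:=\inf_{u\in\partial V(c)}\Phi_{\mu}(u)>0$, which is positive by \eqref{mmu}. Let $\gamma\in\Gamma_{\mu,c}$. Because $u_c\in V(c)\setminus\partial V(c)$ by Lemma~\ref{lem 4.3}, one has $\|\nabla\gamma(0)\|_2^{2}<\rho_0$. On the other hand, $\Phi_{\mu}(\gamma(1))<2m_{\mu}(c)<m_{\mu}(c)<0$, so $\gamma(1)\notin V(c)$ (otherwise $\Phi_{\mu}(\gamma(1))\ge m_{\mu}(c)$), and $\gamma(1)\notin\partial V(c)$ (otherwise $\Phi_{\mu}(\gamma(1))\ge\kappa_{\mu,c}>0$); hence $\|\nabla\gamma(1)\|_2^{2}>\rho_0$. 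By continuity of $t\mapsto\|\nabla\gamma(t)\|_2^{2}$ and the intermediate value theorem, there exists $t^{*}\in(0,1)$ with $\|\nabla\gamma(t^{*})\|_2^{2}=\rho_0$, i.e.\ $\gamma(t^{*})\in\partial V(c)$. Therefore
\begin{equation*}
 \max_{t\in[0,1]}\Phi_{\mu}(\gamma(t))\ge\Phi_{\mu}(\gamma(t^{*}))\ge\kappa_{\mu,c},
\end{equation*}
which gives $M_{\mu}(c)\ge\kappa_{\mu,c}$.

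\textbf{Step 3: Endpoint estimate.} For every $\gamma\in\Gamma_{\mu,c}$, $\Phi_{\mu}(\gamma(0))=\Phi_{\mu}(u_c)=m_{\mu}(c)<0$ and $\Phi_{\mu}(\gamma(1))<2m_{\mu}(c)<0$, so $\max\{\Phi_{\mu}(\gamma(0)),\Phi_{\mu}(\gamma(1))\}\le m_{\mu}(c)<0<\kappa_{\mu,c}$. Taking the supremum over $\gamma\in\Gamma_{\mu,c}$ preserves the bound $m_{\mu}(c)$, and combined with Step~2 yields the full chain \eqref{Mu2}.

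The routine verification is the endpoint comparison in Step~3; the only place where care is needed is Step~1, where one must ensure the chosen dilation truly produces a continuous $H^{1}$-path and pushes $\Phi_{\mu}$ below $2m_{\mu}(c)$. The main conceptual point is Step~2: exploiting that the \emph{strict} inequality $\Phi_{\mu}(\gamma(1))<2m_{\mu}(c)$ forces $\gamma(1)$ to lie strictly outside $\overline{V(c)}$, thereby trapping every admissible path across the energy barrier $\partial V(c)$ provided by \eqref{mmu}.
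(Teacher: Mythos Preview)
Your proof is correct and follows essentially the same approach as the paper: define $\kappa_{\mu,c}:=\inf_{u\in\partial V(c)}\Phi_{\mu}(u)>0$, use that $\gamma(0)=u_c\in V(c)\setminus\partial V(c)$ while $\Phi_{\mu}(\gamma(1))<2m_{\mu}(c)$ forces $\gamma(1)\notin V(c)$, and apply the intermediate value theorem to trap the path across $\partial V(c)$. Your Step~1 (non-emptiness of $\Gamma_{\mu,c}$ via the dilation path) and Step~3 (explicit endpoint bound) supply details the paper leaves implicit, but the core argument is the same.
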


 \begin{proof}
 Set $\kappa_{\mu,c}:=\inf_{u\in \partial V(c)}\Phi_{\mu}(u)$. By \eqref{mmu}, $\kappa_{\mu,c}>0$.
 Let $\gamma\in \Gamma_{\mu,c}$ be arbitrary. Since $\gamma(0)=u_c\in V(c)\setminus (\partial V(c))$, and $\Phi_{\mu}(\gamma(1))<2m_{\mu}(c)$, necessarily in view of \eqref{mmu} $\gamma(1)\not\in V(c)$.
 By continuity of $\gamma(t)$ on $[0,1]$, there exists a $t_0 \in (0,1)$ such that $\gamma(t_0)\in
 \partial V(c)$, and so $\max_{t\in [0, 1]}\Phi_{\mu}(\gamma(t))\ge \kappa_{\mu,c}$. Thus, \eqref{Mu2} holds.
 \end{proof}

 \par
   Let $E=H^1_{\mathrm{rad}}(\R^N)$ and set
 \begin{eqnarray}\label{Ga6}
   \tilde{\Gamma}_{\mu,c}
    & :=  & \left\{\tilde{\gamma}\in \mathcal{C}([0,1], (\mathcal{S}_c\cap H^1_{\mathrm{rad}}(\R^N))\times \R):
             \tilde{\gamma}(0)=(u_c,0), \tilde{\Phi}_{\mu}(\tilde{\gamma}(1))<2m_{\mu}(c)\right\}. \ \
 \end{eqnarray}
 Replace $\Gamma_{c}$ and $\tilde{\Gamma}_{c}$ in Lemmas \ref{lem 3.1} and \ref{lem 3.2} by $\Gamma_{\mu,c}$ and $\tilde{\Gamma}_{\mu,c}$, respectively, we can prove the following lemma by the same arguments as in the proof of
 Lemma \ref{lem 3.2}.

 \begin{lemma}\label{lem 4.2}
 Let $N\ge 3$. For any $\mu > 0$ and any $c\in (0,c_0)$, there exists a sequence $\{u_n\}\subset \mathcal{S}_c\cap H^1_{\mathrm{rad}}(\R^N)$ such that
 \begin{equation}\label{PCe2}
    \Phi_{\mu}(u_n)\rightarrow M_{\mu}(c)>0, \ \   \Phi_{\mu}|_{\mathcal{S}_c}'(u_n) \rightarrow 0\ \
   \mbox{and}\ \ \mathcal{P}_{\mu}(u_n)\rightarrow 0.
 \end{equation}
 \end{lemma}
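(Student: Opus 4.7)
The proof plan is to mimic the argument of Lemma \ref{lem 3.2} in the radial setting, exploiting the scaling $\beta(v,t)(x) = e^{Nt/2}v(e^t x)$ to bring the Pohozaev constraint into the variational framework of Corollary \ref{cor 2.15}.

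First I would set $E = H^1_{\mathrm{rad}}(\R^N)$, $H = L^2(\R^N)$, $M = \mathcal{S}_c \cap H^1_{\mathrm{rad}}(\R^N)$ (normalized as in \eqref{H12}) and introduce the fibered functional
\begin{equation*}
  \tilde{\Phi}_{\mu}(v,t) := \Phi_{\mu}(\beta(v,t)) = \f{e^{2t}}{2}\|\nabla v\|_2^2 - \f{e^{2^*Nt/2}}{2^*}\|v\|_{2^*}^{2^*} - \f{\mu e^{q\gamma_q t}}{q}\|v\|_q^q.
\end{equation*}
Since $\beta(\cdot,t)$ preserves the $L^2$-norm and radial symmetry, $\tilde{\Phi}_\mu \in \mathcal{C}^1(E\times\R,\R)$ and maps $M\times\R$ into itself. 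A direct differentiation, exactly analogous to \eqref{DIv}, gives
\begin{equation*}
  \langle \tilde{\Phi}_{\mu}'(v,t),(w,s)\rangle = \langle \Phi_{\mu}'(\beta(v,t)),\beta(w,t)\rangle + s\,\mathcal{P}_{\mu}(\beta(v,t)),
\end{equation*}
so in particular $\mathcal{P}_\mu(\beta(v,t)) = \langle \tilde{\Phi}_\mu'(v,t),(0,1)\rangle$.

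Next I would define $\tilde{\Gamma}_{\mu,c}$ as in \eqref{Ga6} and set
\begin{equation*}
  \tilde{M}_\mu(c) := \inf_{\tilde\gamma \in \tilde{\Gamma}_{\mu,c}} \max_{t\in[0,1]} \tilde{\Phi}_\mu(\tilde\gamma(t)).
\end{equation*}
The identification $\tilde{M}_\mu(c) = M_\mu(c)$ follows as in the proof of Lemma \ref{lem 3.2}: on one hand $\tilde\gamma(t) := (\gamma(t),0)$ embeds $\Gamma_{\mu,c}$ into $\tilde{\Gamma}_{\mu,c}$ isometrically in energy, yielding $\tilde{M}_\mu(c)\le M_\mu(c)$; on the other, if $\tilde\gamma \in \tilde\Gamma_{\mu,c}$ then $\gamma := \beta\circ\tilde\gamma \in \Gamma_{\mu,c}$ with the same max-energy since $\tilde\gamma(0)=(u_c,0)$ gives $\gamma(0)=u_c$, giving the reverse inequality. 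To verify the hypothesis \eqref{F40} of Theorem \ref{thm 2.14}, I would use Lemma \ref{lem 4.1}: the endpoint values $\tilde{\Phi}_\mu(\tilde\gamma(0)) = \Phi_\mu(u_c) = m_\mu(c) < 0$ and $\tilde{\Phi}_\mu(\tilde\gamma(1)) < 2m_\mu(c) < 0$ stay strictly below $\kappa_{\mu,c} > 0 \le \tilde{M}_\mu(c)$, so $\tilde{a}=\tilde{M}_\mu(c) \ge \kappa_{\mu,c} > \tilde{b}$.

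Now I can apply Corollary \ref{cor 2.15} with minimizing sequence $\tilde\gamma_n(t) := (\gamma_n(t),0)$ drawn from a near-optimal sequence in $\Gamma_{\mu,c}$, obtaining $\{(v_n,t_n)\}\subset M\times\R$ such that $\tilde{\Phi}_\mu(v_n,t_n) \to M_\mu(c)$ and $\|\tilde{\Phi}_\mu|_{M\times\R}'(v_n,t_n)\| \to 0$. Setting $u_n := \beta(v_n,t_n) \in \mathcal{S}_c\cap H^1_{\mathrm{rad}}(\R^N)$, the identities \eqref{H24}–\eqref{H25} then give at once $\Phi_\mu(u_n) \to M_\mu(c)$, $|\mathcal{P}_\mu(u_n)| \le \|\tilde{\Phi}_\mu|_{M\times\R}'(v_n,t_n)\| \to 0$, and $\|\Phi_\mu|_{\mathcal{S}_c}'(u_n)\| \le e^{|t_n|}\|\tilde{\Phi}_\mu|_{M\times\R}'(v_n,t_n)\|$. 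The only nontrivial step is therefore to control the factor $e^{|t_n|}$; I expect this to be the main obstacle, and I would address it via property (ii) of Corollary \ref{cor 2.15}: since $\min_{t\in[0,1]}\|(v_n,t_n)-(\gamma_n(t),0)\|_{E\times\R} \le 2/\sqrt{n}$, the $\R$-component $t_n$ stays bounded, which keeps $e^{|t_n|}$ bounded and yields $\Phi_\mu|_{\mathcal{S}_c}'(u_n)\to 0$. Finally, $M_\mu(c) \ge \kappa_{\mu,c} > 0$ by construction, completing the proof.
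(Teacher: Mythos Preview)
Your proposal is correct and follows exactly the same route as the paper, which simply says to repeat the proof of Lemma~\ref{lem 3.2} with $\Gamma_c,\tilde{\Gamma}_c$ replaced by $\Gamma_{\mu,c},\tilde{\Gamma}_{\mu,c}$; your control of $e^{|t_n|}$ via property~(ii) of Corollary~\ref{cor 2.15} is precisely the implicit step hidden in the paper's reference to ``(i)--(iii)''. One inconsequential slip: in your displayed formula for $\tilde{\Phi}_\mu$ the coefficient of $\|v\|_{2^*}^{2^*}$ should be $e^{2^*t}$ rather than $e^{2^*Nt/2}$, but since you only use the defining identity $\tilde{\Phi}_\mu(v,t)=\Phi_\mu(\beta(v,t))$ this does not affect the argument.
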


 \par
  Let $A_N:=[N(N-2)]^{(N-2)/4}$. Now we define functions $U_n(x):=\Theta_n(|x|)$, where
 \begin{equation}\label{wn}
   \Theta_n(r)=A_N
   \begin{cases}
     \left(\f{n}{1+n^2r^2}\right)^{\f{N-2}{2}}, \ \ & 0\le r < 1;\\
     \left(\f{n}{1+n^2}\right)^{\f{N-2}{2}}(2-r), \ \ & 1\le r< 2;\\
     0, \ \ & r\ge 2.
   \end{cases}
 \end{equation}
 Computing directly, we have
 \begin{eqnarray}\label{M12}
   \|U_n\|_2^2
    &  =  & \int_{\R^N}|U_n|^2\mathrm{d}x=\omega_{N-1}\int_{0}^{+\infty}r^{N-1}|\Theta_n(r)|^2\mathrm{d}r\nonumber\\
    &  =  & \omega_{N-1}A_N^2\left[\int_{0}^{1}\f{n^{N-2}r^{N-1}}{\left(1+n^2r^2\right)^{N-2}}\mathrm{d}r
             +\left(\f{n}{1+n^2}\right)^{N-2}\int_{1}^{2}r^{N-1}(2-r)^2\mathrm{d}r\right]\nonumber\\
    &  =  & \omega_{N-1}A_N^2\left[\f{1}{n^2}\int_{0}^{n}\f{s^{N-1}}{\left(1+s^2\right)^{N-2}}\mathrm{d}s
             +\f{2^{N+3}-(N^2+5N+8)}{N(N+1)(N+2)}\left(\f{n}{1+n^2}\right)^{N-2}\right]\nonumber\\
    &  =  &  \omega_{N-1}A_N^2\left[\f{\xi(n)}{n^2}
             +\f{2^{N+3}-(N^2+5N+8)}{N(N+1)(N+2)}\left(\f{n}{1+n^2}\right)^{N-2}\right]\nonumber\\
    &  =  &  O\left(\f{\xi(n)}{n^{2}}\right),\ \ n\to\infty,
 \end{eqnarray}
 where $\omega_{N-1}$ is the measure of the unit sphere in $\R^N$,
 \begin{equation}\label{M13}
   \xi(n):=\int_{0}^{n}\f{s^{N-1}}{\left(1+s^2\right)^{N-2}}\mathrm{d}s=
   \begin{cases}
     O(n), \ \ & \mbox{if} \ N=3;\\
     O\left(\log (1+n^2)\right), \ \ & \mbox{if} \ N =4;\\
     O(1), \ \ & \mbox{if} \ N\ge 5,
   \end{cases}
 \end{equation}
 \begin{eqnarray}\label{M14}
   \|\nabla U_n\|_2^2
    &  =  & \int_{\R^N}|\nabla U_n|^2\mathrm{d}x
             =\omega_{N-1}\int_{0}^{+\infty}r^{N-1}|\Theta_n'(r)|^2\mathrm{d}r\nonumber\\
    &  =  & \omega_{N-1}A_N^2\left[(N-2)^2\int_{0}^{1}\f{n^{N+2}r^{N+1}}{\left(1+n^2r^2\right)^{N}}\mathrm{d}r
             +\left(\f{n}{1+n^2}\right)^{N-2}\int_{1}^{2}r^{N-1}\mathrm{d}r\right]\nonumber\\
    &  =  & \omega_{N-1}A_N^2\left[(N-2)^2\int_{0}^{n}\f{s^{N+1}}{\left(1+s^2\right)^{N}}\mathrm{d}s
             +\f{2^{N}-1}{N}\left(\f{n}{1+n^2}\right)^{N-2}\right]\nonumber\\
    &  =  & \mathcal{S}^{N/2}+\omega_{N-1}A_N^2\left[-(N-2)^2\int_{n}^{+\infty}\f{s^{N+1}}
             {\left(1+s^2\right)^{N}}\mathrm{d}s
             +\f{2^{N}-1}{N}\left(\f{n}{1+n^2}\right)^{N-2}\right]\nonumber\\
    &  =  &  \mathcal{S}^{N/2}+O\left(\f{1}{n^{N-2}}\right),\ \ n\to\infty,
 \end{eqnarray}
 \begin{eqnarray}\label{M16}
   \|U_n\|_{2^*}^{2^*}
    &  =  & \int_{\R^N}|U_n|^{2^*}\mathrm{d}x
             =\omega_{N-1}\int_{0}^{+\infty}r^{N-1}|\Theta_n(r)|^{2^*}\mathrm{d}r\nonumber\\
    &  =  & \omega_{N-1}A_N^{2^*}\left[\int_{0}^{1}\f{n^{N}r^{N-1}}{\left(1+n^2r^2\right)^{N}}\mathrm{d}r
             +\left(\f{n}{1+n^2}\right)^{N}
             \int_{1}^{2}r^{N-1}(2-r)^{2^*}\mathrm{d}r\right]\nonumber\\
    &  =  & \omega_{N-1}A_N^{2^*}\left[\int_{0}^{n}\f{s^{N-1}}{\left(1+s^2\right)^{N}}\mathrm{d}s
             +\left(\f{n}{1+n^2}\right)^{N}\int_{0}^{1}s^{2^*}(2-s)^{N-1}\mathrm{d}s\right]\nonumber\\
    &  =  &  \mathcal{S}^{N/2}+O\left(\f{1}{n^N}\right),\ \ n\to\infty
 \end{eqnarray}
 and
 \begin{eqnarray}\label{M20}
   \|U_n\|_q^q
    &  =  & \int_{\R^N}|U_n|^q\mathrm{d}x=\omega_{N-1}\int_{0}^{+\infty}r^{N-1}|\Theta_n(r)|^q\mathrm{d}r\nonumber\\
    & \ge & \omega_{N-1}A_N^q\int_{0}^{1}\f{n^{q(N-2)/2}r^{N-1}}
             {\left(1+n^2r^2\right)^{q(N-2)/2}}\mathrm{d}r\nonumber\\
    &  =  & \f{\omega_{N-1}A_N^q}{n^{[2N-(N-2)q]/2}}\int_{0}^{n}\f{s^{N-1}}
             {\left(1+s^2\right)^{q(N-2)/2}}\mathrm{d}s\nonumber\\
    &  =  &  O\left(\f{1}{n^{[2N-(N-2)q]/2}}\right),\ \ n\to\infty.
 \end{eqnarray}
 Both \eqref{M12} and \eqref{M14} imply that $U_n\in E$.

 \begin{lemma}\label{lem 4.4}
  Let $N\ge 3$, $2<q<2+\f{4}{N}$ and $c\in (0,c_0)$. Then there holds:
 \begin{equation}\label{M24}
   M_{\mu}(c)< m_{\mu}(c)+\f{\mathcal{S}^{N/2}}{N}.
 \end{equation}
 \end{lemma}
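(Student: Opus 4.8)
\emph{Strategy and the competing path.} The plan is to exhibit, for each large $n$, an explicit path in $\Gamma_{\mu,c}$ whose maximal energy is strictly below $m_{\mu}(c)+\f{1}{N}\mathcal S^{N/2}$, and then pass to the infimum defining $M_{\mu}(c)$. I build the path by superposing onto the local minimiser $u_c$ of Lemma~\ref{lem 4.3} the \emph{co-centred} truncated extremal function $U_n$ from \eqref{wn}, scaled by a parameter $s$ and renormalised in $L^2$:
\begin{equation*}
  \gamma_n(s):=\sqrt c\,\f{u_c+sU_n}{\|u_c+sU_n\|_2},\qquad s\in[0,S_n].
\end{equation*}
Then $\gamma_n\in\mathcal C([0,S_n],\mathcal S_c\cap H^1_{\mathrm{rad}}(\R^N))$ and $\gamma_n(0)=u_c$; moreover $\gamma_n(s)\to\sqrt c\,U_n/\|U_n\|_2$ in $H^1(\R^N)$ as $s\to+\infty$, and by \eqref{M12}, \eqref{M14}, \eqref{M16} the $\Phi_{\mu}$-energy of this limit behaves like $-\f{c^{2^*/2}}{2^*}\mathcal S^{N/2}\|U_n\|_2^{-2^*}\to-\infty$ as $n\to\infty$ (the $\|U_n\|_2^{-2^*}$ term wins since $2^*>q>2$). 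Hence, for $n$ fixed and large, $S_n$ can be chosen so that $\Phi_{\mu}(\gamma_n(S_n))<2m_{\mu}(c)$, and (after an affine reparametrisation onto $[0,1]$) $\gamma_n\in\Gamma_{\mu,c}$. The point of centring $U_n$ at the origin, where $u_c$ is positive and near its maximum, is that all interaction terms carry a favourable sign; this replaces the ``far-away translation'' device of \cite{JL-MA} and is what should make the estimate uniform in $N\ge 3$.

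\emph{The key algebraic reduction.} Writing $v:=sU_n$ and using the Euler--Lagrange equation \eqref{Pa2} for $u_c$, integration by parts gives $\int_{\R^N}\nabla u_c\cdot\nabla v\,\mathrm{d}x=\lambda_c\int u_cv+\mu\int u_c^{q-1}v+\int u_c^{2^*-1}v$. Substituting this into $\Phi_{\mu}(u_c+v)$ and applying the convexity inequalities $(a+b)^{2^*}\ge a^{2^*}+b^{2^*}+2^*a^{2^*-1}b$ and $(a+b)^{q}\ge a^{q}+qa^{q-1}b$ for $a,b\ge0$ (valid since $q,2^*>2$), the terms $\int u_c^{2^*-1}v$ and $\mu\int u_c^{q-1}v$ cancel and one is left with
\begin{equation*}
  \Phi_{\mu}(u_c+v)\ \le\ m_{\mu}(c)+\Big[\f{1}{2}\|\nabla v\|_2^2-\f{1}{2^*}\|v\|_{2^*}^{2^*}\Big]+\lambda_c\!\int_{\R^N}\!u_c\,v\,\mathrm{d}x-R_n(v),
\end{equation*}
where $R_n(v)\ge 0$ collects the discarded higher-order cross terms and in particular $R_n(v)\gtrsim\int(u_c^{2^*-2}+u_c^{q-2})v^2\,\mathrm{d}x\asymp s^2\|U_n\|_2^2$. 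As a function of $s$, the bracket is maximised at some $s_n\to 1$, with value $\f{1}{N}\big(\|\nabla U_n\|_2^2/\|U_n\|_{2^*}^{2}\big)^{N/2}=\f{1}{N}\mathcal S^{N/2}+O(n^{-(N-2)})$ by \eqref{M14}, \eqref{M16}; and $\int u_cv\ge c_1 s\,n^{-(N-2)/2}$, because $u_c$ is bounded below by a positive constant on $B_1$ while $\int_{B_1}U_n\asymp n^{-(N-2)/2}$. Finally one must pass from $u_c+v$ to $\gamma_n(s)=\mu_n(u_c+v)$ with $\mu_n=1+O(n^{-(N-2)/2})$: expanding $\Phi_{\mu}(\mu_n(u_c+v))$ about $\mu_n=1$, the first-order term $(\mu_n-1)\langle\Phi_{\mu}'(u_c+v),u_c+v\rangle$, whose leading part is $(\mu_n-1)\lambda_c c$ (since $\langle\Phi_{\mu}'(u_c),u_c\rangle=\lambda_c c$), exactly cancels the $\lambda_c\int u_cv$ contribution at order $n^{-(N-2)/2}$.

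\emph{Sign of the residual and conclusion.} After these cancellations, $\max_s\Phi_{\mu}(\gamma_n(s))\le m_{\mu}(c)+\f{1}{N}\mathcal S^{N/2}+E_n$, where $E_n$ gathers only the $O(n^{-(N-2)})$-scale terms: a positive truncation defect coming from $\|\nabla U_n\|_2^2>\mathcal S^{N/2}$ in \eqref{M14}, a positive quadratic renormalisation error $\asymp(\mu_n-1)^2$, and negative contributions $-R_n(v)$ (with $R_n(v)\gtrsim\|U_n\|_2^2$) together with a negative second-order renormalisation term (proportional to $\|\nabla w\|_2^2-(2^*-1)\|w\|_{2^*}^{2^*}\approx-(2^*-2)\mathcal S^{N/2}$, using that $\|\nabla u_c\|_2^2<\rho_0$ is small). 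For $N\ge 4$ the negative cross term wins, since $\|U_n\|_2^2\asymp\xi(n)/n^2$ is of strictly larger order than $n^{-(N-2)}$ by \eqref{M12}--\eqref{M13}, so $E_n<0$ for $n$ large; for $N=3$ all three scales coincide at $n^{-1}$ and one must compare the explicit constants (computable from \eqref{M12}, \eqref{M14}, \eqref{M16}, \eqref{M20}) and check their sum is negative. In either case $E_n<0$ for $n$ large, and for intermediate $s$ the energy stays below $m_{\mu}(c)+\f{1}{N}\mathcal S^{N/2}$ by monotonicity of the bracket for $s\ge s_n$ together with the first paragraph; hence $\max_{s}\Phi_{\mu}(\gamma_n(s))<m_{\mu}(c)+\f{1}{N}\mathcal S^{N/2}$. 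Since $\gamma_n\in\Gamma_{\mu,c}$, estimate \eqref{M24} follows.

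\emph{Main obstacle.} The whole difficulty is concentrated in the sign of $E_n$ when $N=3$: the favourable interaction $\lambda_c\int u_cU_n$ lives at scale $n^{-1/2}$ but is annihilated to that order by the $L^2$-renormalisation, so the outcome is decided only at the next scale $n^{-1}$, where the positive truncation defect in $\|\nabla U_n\|_2^2$, the quadratic renormalisation error, and the negative cross term $R_n(v)\asymp\|U_n\|_2^2$ are all comparable. Getting this balance to come out strictly negative — with no smallness restriction beyond $c\in(0,c_0)$ — requires a careful accounting of the explicit constants in \eqref{M12}, \eqref{M13}, \eqref{M14}; this is the technical heart of the lemma, and precisely the source of the uniformity in $N\ge 3$ that the $N\ge4$ construction of \cite{JL-MA} could not reach.
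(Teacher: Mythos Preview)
Your overall strategy --- superposing the ground state $u_c$ with the truncated Aubin--Talenti bubble $U_n$ and renormalising to $\mathcal S_c$ --- is exactly the paper's, and for $N\ge 4$ your sketch is essentially sound. But the argument you give for $N=3$ is not a proof: you reduce the claim to ``compare the explicit constants \dots\ and check their sum is negative'', without carrying out that check, and in fact the constants involve $\inf_{B_2}u_c$, $\sup_{B_2}u_c$ and $\lambda_c$, which depend on $(c,\mu)$ in a completely implicit way. There is no reason to expect that balance to come out negative uniformly over $c\in(0,c_0)$, and you give none.

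The gap is self-inflicted: you throw away precisely the term that makes $N=3$ easy. For $3\le N\le 5$ one has $2^*\ge 3$, and the sharper pointwise inequality
\[
(a+b)^{2^*}\ \ge\ a^{2^*}+2^*a^{2^*-1}b+2^*ab^{2^*-1}+b^{2^*}\qquad(a,b\ge 0)
\]
holds. This extra cross term yields, after integration, $-\,t^{2^*-1}\!\int_{\R^N} u_c\,U_n^{2^*-1}\,\mathrm dx\asymp -n^{-(N-2)/2}$ (see \eqref{M26}), which is \emph{strictly larger} in order than every positive error you list ($O(\xi(n)/n^2)$ and $O(n^{-(N-2)})$); no constant comparison is needed. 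Your remainder $R_n(v)$ actually contains this term, but your lower bound $R_n(v)\gtrsim\|U_n\|_2^2$ discards it and keeps only the weaker $n^{-1}$ piece. This is exactly what the paper exploits (see the inequality \eqref{1+t} and the estimate \eqref{M30}). For $N\ge 6$ the paper instead keeps $-\mu t^q\|U_n\|_q^q/q$, which by \eqref{M20} is of order $n^{-(N-(N-2)q/2)}\gg n^{-2}$ and again dominates; your scheme drops this too, though for $N\ge 4$ your $\|U_n\|_2^2$ bound happens to suffice.

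A second, more minor, point: the paper renormalises by \emph{dilation}, setting $W_{n,t}(x)=\tau^{(N-2)/2}(u_c+tU_n)(\tau x)$ with $\tau=\|u_c+tU_n\|_2/\sqrt c$, rather than by the multiplicative scaling you use. This choice leaves $\|\nabla\cdot\|_2$ and $\|\cdot\|_{2^*}$ untouched and pushes the entire renormalisation error into the $L^q$ term, where it is controlled by the Pohozaev relation $\lambda_c c=-\mu(1-\gamma_q)\|u_c\|_q^q$. Your multiplicative scaling instead perturbs the dominant $\frac12\|\nabla w\|_2^2-\frac{1}{2^*}\|w\|_{2^*}^{2^*}$ block by factors $\mu_n^2$ and $\mu_n^{2^*}$, generating extra $O(n^{-(N-2)/2})$ corrections (since $\langle\Phi_\mu'(w),w\rangle-\lambda_c c$ is $O(1)$ away from $s=1$) that you have not tracked; with the dilation device these simply do not arise.
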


 \begin{proof}  Inspired by \cite[Lemma 3.1]{WW-JFA}, let $u_c\in H^1_{\mathrm{rad}}(\mathbb{R}^N)$ be given in Lemma \ref{lem 4.3}. Then by
 Lemmas \ref{lem 3.3} and \ref{lem 4.3}, we have
 \begin{equation}\label{M05}
   \|u_{c}\|_2^2=c,\ \ \Phi_{\mu}(u_{c})=m_{\mu}(c),\ \ \lambda_c\|u_c\|_2^2=-\mu(1-\gamma_q)\|u_c\|_q^q,
  \ \ u_{c}(x)>0, \ \ \forall \ x\in \R^N.
 \end{equation}
 Set $b:=\inf_{|x|\le 1}u_c(x)$ and $B:=\sup_{|x|\le 2}u_c(x)$. Then $0<b\le B<+\infty$. Hence, it follows from \eqref{wn} and \eqref{M12} that
 \begin{eqnarray}\label{M25}
  \int_{\R^N}u_{c}^{q-1}U_n\mathrm{d}x
    & \le & B^{q-1}\int_{|x|\le 2}U_n\mathrm{d}x=O\left(\f{\sqrt{\xi(n)}}{n}\right),  \ \ n\to\infty,
 \end{eqnarray}
 \begin{eqnarray}\label{M25+}
  \int_{\R^N}u_{c}U_n\mathrm{d}x
    & \le & B\int_{|x|\le 2}U_n\mathrm{d}x=O\left(\f{\sqrt{\xi(n)}}{n}\right),  \ \ n\to\infty
 \end{eqnarray}
 and
 \begin{eqnarray}\label{M26}
  \int_{\R^N}u_{c}|U_n|^{2^*-1}\mathrm{d}x
    & \ge & b\omega_{N-1}\int_{0}^{1}r^{N-1}|\Theta_n(r)|^{(N+2)/(N-2)}\mathrm{d}r\nonumber\\
    &  =  & b\omega_{N-1}A_N^{(N+2)/(N-2)}\int_{0}^{1}\f{n^{(N+2)/2}r^{N-1}}
             {\left(1+n^2r^2\right)^{(N+2)/2}}\mathrm{d}r\nonumber\\
    &  =  & b\omega_{N-1}A_N^{(N+2)/(N-2)}n^{-(N-2)/2}\int_{0}^{n}\f{s^{N-1}}
             {\left(1+s^2\right)^{(N+2)/2}}\mathrm{d}s\nonumber\\
    &  =  &  O\left(\f{1}{n^{(N-2)/2}}\right),\ \ n\to\infty.
 \end{eqnarray}
 By \eqref{Pa2}, \eqref{M12} and \eqref{M05}, one has
 \begin{eqnarray}\label{M28}
  \int_{\R^N}\nabla u_{c}\cdot \nabla U_n\mathrm{d}x
   &  =  & \int_{\R^N}\left(\mu u_c^{q-1}
             +u_c^{2^*-1}+\lambda_cu_c\right)U_n\mathrm{d}x
 \end{eqnarray}
 and for any $t>0$,
 \begin{eqnarray}\label{M29}
  \|u_{c}+tU_n\|_2^2
   &  =  & c+t^2\|U_n\|_2^2+2t\int_{\R^N}u_cU_n\mathrm{d}x\nonumber\\
   &  =  & c+2t\int_{\R^N}u_cU_n\mathrm{d}x +t^2\left[O\left(\f{\xi(n)}{n^{2}}\right)\right], \ \ n\to\infty.
 \end{eqnarray}
 Let $\tau:=\|u_{c}+tU_n\|_2/\sqrt{c}$. Then
 \begin{eqnarray}\label{tau}
  \tau^2 = 1+\f{2t}{c}\int_{\R^N}u_cU_n\mathrm{d}x +t^2\left[O\left(\f{\xi(n)}{n^{2}}\right)\right],
          \ \ n\to\infty.
 \end{eqnarray}
 Now, we define
 \begin{eqnarray}\label{Wn1}
  W_{n,t}(x):=\tau^{(N-2)/2}[u_{c}(\tau x)+tU_n(\tau x)].
 \end{eqnarray}
 Then one has
 \begin{eqnarray}\label{Wn2}
   \|\nabla W_{n,t}\|_2^2=\|\nabla(u_{c}+tU_n)\|_2^2,\ \ \|W_{n,t}\|_{2^*}^{2^*}=\|u_{c}+tU_n\|_{2^*}^{2^*}
 \end{eqnarray}
 and
 \begin{eqnarray}\label{Wn3}
   \|W_{n,t}\|_2^2=\tau^{-2}\|u_{c}+tU_n\|_2^2=c,\ \ \|W_{n,t}\|_q^q=\tau^{q\gamma_q-q}\|u_{c}+tU_n\|_q^q.
 \end{eqnarray}
 It is easy to verify the following inequalities:
 \begin{eqnarray}\label{1+t}
   (1+t)^{p}\ge
   \begin{cases}
     1+pt+pt^{p-1}+t^p, \ \ & \mbox{if} \ p\ge 3;\\
     1+pt^{p-1}+t^p, \ \ & \mbox{if} \ p\ge 2.
   \end{cases}
 \end{eqnarray}

 \par
   In what follows, we distinguish two cases.

 \par
   Case 1). $3\le N\le 5$. In this case, we have $2^*\ge 3$. From \eqref{Phu}, \eqref{M13}-\eqref{M16},
 \eqref{M05}-\eqref{M28} and \eqref{tau}-\eqref{1+t}, we have
 \begin{eqnarray}\label{M30}
   &     & \Phi_{\mu}(W_{n,t})\nonumber\\
   &  =  & \f{1}{2}\|\nabla W_{n,t}\|_2^{2}-\f{1}{2^*}\|W_{n,t}\|_{2^*}^{2^*}
             -\f{\mu}{q}\|W_{n,t}\|_q^q\nonumber\\
   &  =  & \f{1}{2}\|\nabla (u_{c}+tU_n)\|_2^{2}-\f{1}{2^*}\|u_{c}+tU_n\|_{2^*}^{2^*}
            -\f{\mu\tau^{q\gamma_q-q}}{q}\|u_{c}+tU_n\|_q^q\nonumber\\
   & \le & \f{1}{2}\|\nabla u_{c}\|_2^{2}-\f{1}{2^*}\|u_{c}\|_{2^*}^{2^*}
            -\f{\mu\tau^{q\gamma_q-q}}{q}\|u_{c}\|_q^q
            +\f{t^2}{2}\|\nabla U_n\|_2^{2}-\f{t^{2^*}}{2^*}\|U_n\|_{2^*}^{2^*}\nonumber\\
   &     & \ \ \ \ +t\int_{\R^N}\nabla u_{c}\cdot \nabla U_n\mathrm{d}x-t\int_{\R^N}u_{c}^{2^*-1}U_n\mathrm{d}x
               -t^{2^*-1}\int_{\R^N}u_{c}U_n^{2^*-1}\mathrm{d}x\nonumber\\
   &     & \ \ \ \ -\mu\tau^{q\gamma_q-q}t\int_{\R^N}u_{c}^{q-1}U_n\mathrm{d}x\nonumber\\
   &  =  & \Phi(u_{c}) +\f{\mu\left(1-\tau^{q\gamma_q-q}\right)}{q}\|u_{c}\|_q^q
            +\f{t^2}{2}\|\nabla U_n\|_2^{2}-\f{t^{2^*}}{2^*}\|U_n\|_{2^*}^{2^*}\nonumber\\
   &     & \ \ \ \ +\mu\left(1-\tau^{q\gamma_q-q}\right)t\int_{\R^N}u_c^{q-1}U_n\mathrm{d}x
             +\lambda_ct\int_{\R^N}u_cU_n\mathrm{d}x-t^{2^*-1}\int_{\R^N}u_{c}U_n^{2^*-1}\mathrm{d}x\nonumber\\
   &  =  & m_{\mu}(c)+\f{\mu\|u_{c}\|_q^q}{q}\left\{1-\left[1+\f{2t}{c}\int_{\R^N}u_cU_n\mathrm{d}x
            +t^2\left(O\left(\f{\xi(n)}{n^{2}}\right)\right)\right]^{-q(1-\gamma_q)/2}\right\}\nonumber\\
   &     & \ \ \ \ +\f{t^2}{2}\|\nabla U_n\|_2^{2}-\f{t^{2^*}}{2^*}\|U_n\|_{2^*}^{2^*}
               +\lambda_ct\int_{\R^N}u_cU_n\mathrm{d}x-t^{2^*-1}\int_{\R^N}u_{c}U_n^{2^*-1}\mathrm{d}x\nonumber\\
   &     & \ \ \ \ +\mu\left\{1-\left[1+\f{2t}{c}\int_{\R^N}u_cU_n\mathrm{d}x
            +t^2\left(O\left(\f{\xi(n)}{n^{2}}\right)\right)\right]^{-q(1-\gamma_q)/2}\right\}
            \left(t\int_{\R^N}u_c^{q-1}U_n\mathrm{d}x\right)\nonumber\\
   & \le & m_{\mu}(c)+t^2\left(O\left(\f{\xi(n)}{n^{2}}\right)\right)
            +\f{t^{2}}{2}\left[\mathcal{S}^{N/2}+O\left(\f{1}{n^{N-2}}\right)\right]
            -\f{t^{2^*}}{2^*}\left[\mathcal{S}^{N/2}+O\left(\f{1}{n^{N}}\right)\right]\nonumber\\
   &     & \ \ \ \ -t^{2^*-1}\left[O\left(\f{1}{n^{(N-2)/2}}\right)\right]
             +\f{\mu q(1-\gamma_q)t^2}{c}\left(\int_{\R^N}u_c^{q-1}U_n\mathrm{d}x\right)
             \left(\int_{\R^N}u_cU_n\mathrm{d}x\right)\nonumber\\
   & \le & m_{\mu}(c)+\mathcal{S}^{N/2}\left(\f{t^2}{2}-\f{t^{2^*}}{2^*}\right)
            +t^2\left(O\left(\f{\xi(n)}{n^{2}}\right)\right)
            +\f{t^2}{2}\left[O\left(\f{1}{n^{N-2}}\right)\right]\nonumber\\
   &     & \ \ \ \ -\f{t^{2^*}}{2^*}\left[O\left(\f{1}{n^{N}}\right)\right]
            -t^{2^*-1}\left[O\left(\f{1}{n^{(N-2)/2}}\right)\right],\ \ \forall \ t>0.
 \end{eqnarray}
 Noting that
 \begin{equation}\label{M32}
   0<\f{t^2}{2}-\f{t^{2^*}}{2^*}\le \f{1}{N},\ \ \forall \ t>0.
 \end{equation}
 Since $3\le N\le 5$, hence, it follows from \eqref{M13}, \eqref{M30} and \eqref{M32} that there exists
 $\bar{n}\in\N$ such that
 \begin{equation}\label{M33}
   \sup_{t>0}\Phi_{\mu}(W_{\bar{n},t})< m_{\mu}(c)+\f{\mathcal{S}^{N/2}}{N}.
 \end{equation}

 \vskip2mm
 Case 2). $N\ge 6$. In this case, we have $2<q<2^*\le 3$. From \eqref{Phu}, \eqref{M13}-\eqref{M20}, \eqref{M05}-\eqref{M25+}, \eqref{M28} and \eqref{tau}-\eqref{1+t}, we have
 \begin{eqnarray}\label{M34}
   &     & \Phi_{\mu}(W_{n,t})\nonumber\\
   &  =  & \f{1}{2}\|\nabla (u_{c}+tU_n)\|_2^{2}-\f{1}{2^*}\|u_{c}+tU_n\|_{2^*}^{2^*}
            -\f{\mu\tau^{q\gamma_q-q}}{q}\|u_{c}+tU_n\|_q^q\nonumber\\
   & \le & \f{1}{2}\|\nabla u_{c}\|_2^{2}-\f{1}{2^*}\|u_{c}\|_{2^*}^{2^*}
            -\f{\mu\tau^{q\gamma_q-q}}{q}\|u_{c}\|_q^q
            +\f{t^2}{2}\|\nabla U_n\|_2^{2}-\f{t^{2^*}}{2^*}\|U_n\|_{2^*}^{2^*}
            -\f{\mu\tau^{q\gamma_q-q}t^q}{q}\|U_n\|_q^q\nonumber\\
   &     & \ \ +t\int_{\R^N}\nabla u_{c}\cdot \nabla U_n\mathrm{d}x
               -t\int_{\R^N}u_{c}^{2^*-1}U_n\mathrm{d}x
               -\mu\tau^{q\gamma_q-q}t\int_{\R^N}u_{c}^{q-1}U_n\mathrm{d}x\nonumber\\
   &  =  & \Phi(u_{c}) +\f{\mu\left(1-\tau^{q\gamma_q-q}\right)}{q}\|u_{c}\|_q^q
            +\f{t^2}{2}\|\nabla U_n\|_2^{2}-\f{t^{2^*}}{2^*}\|U_n\|_{2^*}^{2^*}
            -\f{\mu\tau^{q\gamma_q-q}t^q}{q}\|U_n\|_q^q\nonumber\\
   &     & \ \ +\mu\left(1-\tau^{q\gamma_q-q}\right)t\int_{\R^N}u_c^{q-1}U_n\mathrm{d}x
             +\lambda_ct\int_{\R^N}u_cU_n\mathrm{d}x\nonumber\\
   & \le & m_{\mu}(c)+t^2\left[O\left(\f{1}{n^{2}}\right)\right]
            +\f{t^{2}}{2}\left[\mathcal{S}^{N/2}+O\left(\f{1}{n^{N-2}}\right)\right]
            -\f{t^{2^*}}{2^*}\left[\mathcal{S}^{N/2}+O\left(\f{1}{n^{N}}\right)\right]\nonumber\\
   &     & \ \ -t^q\left[O\left(\f{1}{n^{N-(N-2)q/2}}\right)\right]\nonumber\\
   & \le & m_{\mu}(c)+\mathcal{S}^{N/2}\left(\f{t^2}{2}-\f{t^{2^*}}{2^*}\right)
            +t^2\left[O\left(\f{1}{n^{2}}\right)\right]
            -\f{t^{2^*}}{2^*}\left[O\left(\f{1}{n^{N}}\right)\right]\nonumber\\
   &     & \ \  -t^q\left[O\left(\f{1}{n^{N-(N-2)q/2}}\right)\right],\ \ \forall \ t>0.
 \end{eqnarray}
 Hence, it follows from \eqref{M32}, \eqref{M34} and the fact $N\ge 6$ and $2<q<2+\f{4}{N}$ that there exists
 $\bar{n}\in\N$ such that \eqref{M33} holds.

 \par
   Next, we prove that \eqref{M24} hold.  Let $\bar{n}\in \N$ be given in \eqref{M33}. By \eqref{M29},
 \eqref{Wn1}, \eqref{Wn2} and \eqref{Wn3}, we have
 \begin{eqnarray}\label{Wn4}
  W_{\bar{n},t}(x):=\tau^{(N-2)/2}[u_{c}(\tau x)+tU_{\bar{n}}(\tau x)],\ \ \|W_{\bar{n},t}\|_2^2=c
 \end{eqnarray}
 and
 \begin{eqnarray}\label{Wn5}
   \|\nabla W_{\bar{n},t}\|_2^2
    &  =  & \|\nabla(u_{c}+tU_{\bar{n}})\|_2^2= \|\nabla u_{c}\|_2^2+t^2\|\nabla U_{\bar{n}})\|_2^2
             +2t\int_{\R^N}\nabla u_c\cdot \nabla U_{\bar{n}}\mathrm{d}x,
 \end{eqnarray}
 where
 \begin{eqnarray}\label{tau2}
   \tau^2=\|u_{c}+tU_{\bar{n}}\|_2^2/c
   &  =  & 1+\f{2t}{c}\int_{\R^N}u_cU_{\bar{n}}\mathrm{d}x +\|U_{\bar{n}}\|_2^2t^2.
 \end{eqnarray}
 It follows from \eqref{M30}, \eqref{M34}, \eqref{Wn4} and \eqref{Wn5} that $W_{\bar{n},t}\in \mathcal{S}_c$
 for all $t\ge 0$, $W_{\bar{n},0}=u_c$ and $\Phi_{\mu}(W_{\bar{n},t}) <2m_{\mu}(c)$ for large $t>0$. Thus, there
 exist $\hat{t}>0$ such that
 \begin{eqnarray}\label{tau2}
   \Phi_{\mu}(W_{\bar{n},\hat{t}}) <2m_{\mu}(c).
 \end{eqnarray}
 Let $\gamma_{\bar{n}}(t):=W_{\bar{n},t\hat{t}}$. Then $\gamma_{\bar{n}}\in \Gamma_{\mu,c}\cap H^1_{\mathrm{rad}}(\mathbb{R}^N)$ defined by
 \eqref{Ga5}. Hence, it follows from \eqref{Mu2} and \eqref{M33} that \eqref{M24} holds.
 \end{proof}

 \begin{lemma}\label{lem 4.5}{\rm\cite[Proposition 1.11]{JL-MA}}
  Let $N\ge 3$ and $2<q<2+\f{4}{N}$. For any $c\in (0,c_0)$, if \eqref{M24} holds,
 then the sequence $\{u_n\}$ obtained in Lemma {\rm\ref{lem 4.2}} is, up to subsequence,
 strongly convergent in $H_{\mathrm{rad}}^1(\R^N)$.
 \end{lemma}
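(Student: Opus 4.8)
The plan is a standard concentration--compactness argument that exploits the strict energy gap \eqref{M24}. First I would check that the sequence $\{u_n\}\subset\mathcal{S}_c\cap H^1_{\mathrm{rad}}(\R^N)$ produced by Lemma \ref{lem 4.2} is bounded in $H^1$: combining
\[
  \Phi_\mu(u_n)-\tfrac{1}{2^*}\mathcal{P}_\mu(u_n)=\tfrac1N\|\nabla u_n\|_2^2-\mu\Big(\tfrac1q-\tfrac{\gamma_q}{2^*}\Big)\|u_n\|_q^q
\]
with the Gagliardo--Nirenberg inequality \eqref{GN} and $\|u_n\|_2^2=c$ — here $\tfrac1q-\tfrac{\gamma_q}{2^*}>0$ and $q\gamma_q<2$ because $q<2+\tfrac4N$ — bounds $\|\nabla u_n\|_2$. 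Then by Lemma \ref{lem 2.2}, $\Phi_\mu'(u_n)-\lambda_n u_n\to0$ in $H^{-1}(\R^N)$ with $\lambda_n=\tfrac1c\langle\Phi_\mu'(u_n),u_n\rangle$, and since $c\lambda_n-\mathcal{P}_\mu(u_n)=-\mu(1-\gamma_q)\|u_n\|_q^q$ the multipliers are bounded; up to a subsequence $u_n\rightharpoonup\bar u$ in $H^1_{\mathrm{rad}}(\R^N)$, $u_n\to\bar u$ in $L^q(\R^N)$ (compact embedding of $H^1_{\mathrm{rad}}$ into $L^q$, $2<q<2^*$), $u_n\to\bar u$ a.e., and $\lambda_n\to\lambda_c$ with $c\lambda_c=-\mu(1-\gamma_q)\|\bar u\|_q^q\le0$.

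Next I would pass to the limit to see that $\bar u$ solves $-\Delta\bar u-\lambda_c\bar u=\mu|\bar u|^{q-2}\bar u+|\bar u|^{2^*-2}\bar u$ in $\R^N$ — using the $L^q$-convergence for the subcritical term and the weak $L^{(2^*)'}$-continuity of $w\mapsto|w|^{2^*-2}w$ — hence $\mathcal{P}_\mu(\bar u)=0$ by Pohozaev. Writing $v_n:=u_n-\bar u\rightharpoonup0$ and applying the Brezis--Lieb lemma to the $L^2$, $L^{2^*}$, $L^q$ and $\|\nabla\cdot\|_2$ terms, subtracting $\mathcal{P}_\mu(\bar u)=0$ from $\mathcal{P}_\mu(u_n)=o(1)$ gives $\|\nabla v_n\|_2^2-\|v_n\|_{2^*}^{2^*}\to0$; setting $\ell:=\lim\|\nabla v_n\|_2^2$ along a further subsequence, $\|v_n\|_{2^*}^{2^*}\to\ell$ as well, the energy splits as $M_\mu(c)=\Phi_\mu(\bar u)+\tfrac1N\ell$, and the Sobolev inequality \eqref{Sob} forces $\ell=0$ or $\ell\ge\mathcal{S}^{N/2}$.

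The heart of the argument is ruling out $\ell\ge\mathcal{S}^{N/2}$. If $\bar u=0$, then $M_\mu(c)=\tfrac1N\ell$, so $M_\mu(c)>0$ forces $\ell>0$, hence $\ell\ge\mathcal{S}^{N/2}$ and $M_\mu(c)\ge\tfrac1N\mathcal{S}^{N/2}$, which contradicts \eqref{M24} since $m_\mu(c)<0$; thus $\bar u\neq0$ and $\lambda_c<0$. Then I would test $\Phi_\mu'(u_n)-\lambda_n u_n=o(1)$ against $v_n$, subtract the (vanishing) pairing of $\Phi_\mu'(\bar u)-\lambda_c\bar u=0$ with $v_n$, and use Brezis--Lieb together with $\|\nabla v_n\|_2^2-\|v_n\|_{2^*}^{2^*}\to0$ to obtain $\lambda_n\|v_n\|_2^2\to0$; since $\lambda_n\to\lambda_c<0$, no $L^2$-mass escapes, i.e. $\|v_n\|_2^2\to0$, so $\|\bar u\|_2^2=c$ and $\bar u\in\mathcal{M}_\mu(c)$. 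Invoking the structure of the Pohozaev manifold for $c\in(0,c_0)$ — $\mathcal{M}_\mu(c)=\mathcal{M}_\mu^+(c)\cup\mathcal{M}_\mu^-(c)$ with $\Phi_\mu>0$ on $\mathcal{M}_\mu^-(c)$ and $\inf_{\mathcal{M}_\mu^+(c)}\Phi_\mu=m_\mu(c)$ (see \cite{So-JFA,JL-MA}) — gives $\Phi_\mu(\bar u)\ge m_\mu(c)$, hence $M_\mu(c)\ge m_\mu(c)+\tfrac1N\mathcal{S}^{N/2}$, contradicting \eqref{M24} once more. Therefore $\ell=0$, and with $\|v_n\|_2^2\to0$ this yields $u_n\to\bar u$ strongly in $H^1_{\mathrm{rad}}(\R^N)$.

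I expect the last paragraph to be the main obstacle: the delicate points are (i) showing that no $L^2$-mass is lost in passing to the weak limit, which genuinely requires the strict sign $\lambda_c<0$ so that $\bar u$ actually lies on $\mathcal{S}_c$, and (ii) the energy-level identification $\inf_{\mathcal{M}_\mu(c)}\Phi_\mu=m_\mu(c)$, valid only in the range $c<c_0$. Everything else — the boundedness, the Pohozaev identity for the weak limit, and the Brezis--Lieb/Sobolev bookkeeping — is routine.
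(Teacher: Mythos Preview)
Your argument is correct and is precisely the concentration--compactness scheme behind \cite[Proposition 1.11]{JL-MA}, which is exactly what the paper invokes: the paper does not give its own proof of Lemma~\ref{lem 4.5} but simply cites that result. The only structural difference is that \cite{JL-MA} works with a non-radial Palais--Smale sequence and therefore needs to recover compactness in $L^q$ via translations, whereas here the sequence from Lemma~\ref{lem 4.2} lies in $H^1_{\mathrm{rad}}(\R^N)$, so the compact embedding $H^1_{\mathrm{rad}}\hookrightarrow L^q$ gives $u_n\to\bar u$ in $L^q$ directly --- your write-up already takes this shortcut. The remaining steps (Brezis--Lieb splitting of $\mathcal{P}_\mu$ and $\Phi_\mu$, the Sobolev dichotomy $\ell=0$ or $\ell\ge\mathcal{S}^{N/2}$, the sign $\lambda_c<0$ forcing $\|\bar u\|_2^2=c$, and the comparison $\Phi_\mu(\bar u)\ge m_\mu(c)$ via the structure of $\mathcal{M}_\mu(c)$ for $c\in(0,c_0)$) match the argument in \cite{JL-MA} essentially line by line.

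One small remark on the point you flagged as delicate: the inequality $\Phi_\mu(\bar u)\ge m_\mu(c)$ for $\bar u\in\mathcal{M}_\mu(c)$ is indeed not entirely free --- it relies on the fact, established in \cite{JL-MA}, that $\{u\in\mathcal{M}_\mu(c):\Phi_\mu(u)\le 0\}\subset V(c)$ when $c\in(0,c_0)$, so either $\Phi_\mu(\bar u)\ge 0>m_\mu(c)$ or $\bar u\in V(c)$ and hence $\Phi_\mu(\bar u)\ge m_\mu(c)$ by definition. Since you cite \cite{So-JFA,JL-MA} for this step, the argument is complete.
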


 \begin{proof}[Proof of Theorem {\rm\ref{thm 1.2}}] The proof of Theorem \ref{thm 1.2} follows directly
 combining Lemmas \ref{lem 4.2}, \ref{lem 4.4} and \ref{lem 4.5}.
 \end{proof}

{\subsection{$L^2$-critical and $L^2$-supercritical perturbation}}

 \par
   In this subsection, we always assume that $2+\f{4}{N} \le q < 2^*$ and shall prove Theorem \ref{thm 1.3}.

 Let
 \begin{equation}\label{h1h2}
   h(t):=\f{1-t^2}{2}-\f{1-t^{2^*}}{2^*}.
 \end{equation}
 It is easy to see that $h(t)>h(1)=0$ for all $t\in [0,1)\cup (1,+\infty)$.

 \begin{lemma}\label{lem 4.6}
  Let $N\ge 3$, $c>0$, $\mu>0$ and $2+\f{4}{N}\le q < 2^*$. Then there hold
 \begin{equation}\label{PhP}
   \Phi_{\mu}(u)\ge \Phi_{\mu}\left(t^{N/2}u_t\right)+\f{1-t^2}{2}\mathcal{P}_{\mu}(u)+h(t)\|u\|_{2^*}^{2^*},
    \ \ \forall \ u\in \mathcal{S}_c, \ t>0.
 \end{equation}
 \end{lemma}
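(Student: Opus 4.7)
The plan is to reduce \eqref{PhP} to an elementary one-variable inequality. Write $w := t^{N/2}u_t$ for the $L^{2}$-preserving dilation of $u$, i.e. $w(x)=t^{N/2}u(tx)$. A routine change of variables shows
\[
\|w\|_2^2=\|u\|_2^2=c,\qquad \|\nabla w\|_2^2=t^2\|\nabla u\|_2^2,\qquad \|w\|_{2^*}^{2^*}=t^{2^*}\|u\|_{2^*}^{2^*},\qquad \|w\|_q^q=t^{q\gamma_q}\|u\|_q^q,
\]
where the last identity uses $N(q/2-1)=q\gamma_q$ (recall $\gamma_q=N(q-2)/2q$). In particular $w\in\mathcal S_c$, so $\Phi_\mu(w)$ is well defined.

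Next I would compute
\[
\Phi_\mu(u)-\Phi_\mu(w)=\tfrac{1-t^2}{2}\|\nabla u\|_2^2-\tfrac{1-t^{2^*}}{2^*}\|u\|_{2^*}^{2^*}-\tfrac{\mu(1-t^{q\gamma_q})}{q}\|u\|_q^q,
\]
and, using the definitions \eqref{Pu} of $\mathcal P_\mu$ and \eqref{h1h2} of $h$,
\[
\tfrac{1-t^2}{2}\mathcal P_\mu(u)+h(t)\|u\|_{2^*}^{2^*}=\tfrac{1-t^2}{2}\|\nabla u\|_2^2-\tfrac{1-t^{2^*}}{2^*}\|u\|_{2^*}^{2^*}-\tfrac{\mu\gamma_q(1-t^2)}{2}\|u\|_q^q.
\]
Subtracting, the $\|\nabla u\|_2^2$ and $\|u\|_{2^*}^{2^*}$ contributions cancel exactly, leaving the desired inequality equivalent to
\[
\mu\|u\|_q^q\left[\tfrac{\gamma_q(1-t^2)}{2}-\tfrac{1-t^{q\gamma_q}}{q}\right]\ge 0,\qquad\forall\,t>0.
\]

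Setting $a:=q\gamma_q/2=N(q-2)/4$, the bracket has the same sign as $a(1-t^2)-(1-t^{2a})$. Since $q\ge 2+\tfrac{4}{N}$, we have $a\ge 1$. The last step is then the scalar inequality
\[
a(1-s)\ge 1-s^{a},\qquad\forall\,s>0,\ a\ge 1,
\]
(applied with $s=t^2$), which follows immediately from analysing $g(s)=a(1-s)-(1-s^a)$: one has $g(1)=0$, $g'(s)=a(s^{a-1}-1)$, so $g$ attains its minimum at $s=1$ and is nonnegative elsewhere (it is identically zero when $a=1$, corresponding to the $L^2$-critical case $q=\bar q$).

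There is no real obstacle here; the only technical point to watch is book-keeping of the exponents in the scaling, in particular that $N(q/2-1)=q\gamma_q$ and that $q\gamma_q\ge 2$ is equivalent to the hypothesis $q\ge 2+\tfrac{4}{N}$ that makes the scalar convexity inequality tight at $t=1$.
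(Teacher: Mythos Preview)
Your proof is correct and is precisely the ``simple calculation'' the paper alludes to (the paper itself does not spell out the argument). The reduction to the scalar inequality $a(1-s)\ge 1-s^a$ with $a=q\gamma_q/2\ge 1$ is the right way to do it, and your bookkeeping of the scaling exponents is accurate.
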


 \par
   By a simple calculation, we can prove the above lemma. From Lemma \ref{lem 4.6}, we have the following
 corollary.

 \begin{corollary}\label{cor 4.7}
 Let $N\ge 3$, $c>0$, $\mu>0$ and $2+\f{4}{N}\le q < 2^*$. Then for any $u\in \mathcal{M}_{\mu}(c)$, there holds
 \begin{equation}\label{Pmax}
   \Phi_{\mu}(u) = \max_{t> 0}\Phi_{\mu}\left(t^{N/2}u_t\right).
 \end{equation}
 \end{corollary}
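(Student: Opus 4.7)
The plan is to derive Corollary \ref{cor 4.7} as an immediate consequence of Lemma \ref{lem 4.6} together with the definition of the Pohozaev manifold $\mathcal{M}_\mu(c)$. The inequality \eqref{PhP} provides a lower bound for $\Phi_\mu(u)$ in terms of the scaled energy $\Phi_\mu(t^{N/2}u_t)$, with correction terms that are controlled by two nonnegative quantities: the Pohozaev quantity $\mathcal{P}_\mu(u)$ (weighted by $(1-t^2)/2$) and the Sobolev norm $\|u\|_{2^*}^{2^*}$ (weighted by $h(t)$). The $L^2$-constraint $u \in \mathcal{S}_c$ is preserved along the path $t \mapsto t^{N/2}u_t$, so the formula makes sense as a function of $t>0$.

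First, I would fix $u \in \mathcal{M}_\mu(c)$ and invoke \eqref{PhP} directly. Since by definition of the Pohozaev manifold $\mathcal{P}_\mu(u)=0$, the middle term on the right-hand side of \eqref{PhP} vanishes for every $t>0$, leaving the simplified inequality
\begin{equation*}
\Phi_\mu(u) \;\ge\; \Phi_\mu\!\left(t^{N/2} u_t\right) + h(t)\,\|u\|_{2^*}^{2^*}, \qquad \forall\, t>0.
\end{equation*}
Next, I would recall from the discussion right after \eqref{h1h2} that $h(t)\ge 0$ for every $t>0$, with $h(t)=0$ only at $t=1$. Since $\|u\|_{2^*}^{2^*}\ge 0$, dropping the nonnegative term $h(t)\|u\|_{2^*}^{2^*}$ yields $\Phi_\mu(u)\ge \Phi_\mu(t^{N/2}u_t)$ for every $t>0$.

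Finally, I would verify that equality is attained at $t=1$: indeed $1^{N/2}u_1 = u$, hence $\Phi_\mu(u) = \Phi_\mu(1^{N/2}u_1)$, so the supremum over $t>0$ is in fact achieved, and equals $\Phi_\mu(u)$. This establishes \eqref{Pmax}. Since every step reduces to plugging $\mathcal{P}_\mu(u)=0$ into \eqref{PhP} and using the sign of $h$, there is no genuine obstacle; the only point worth being mindful of is to explicitly record that $h(t)\ge 0$ (which is why the lower estimate in \eqref{PhP} remains a lower estimate after dropping that term) and that $t=1$ lies in the range $t>0$ so that the maximum is realized, rather than merely approached.
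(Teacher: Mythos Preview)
Your proof is correct and follows exactly the route the paper intends: the paper presents Corollary~\ref{cor 4.7} as an immediate consequence of Lemma~\ref{lem 4.6} (``From Lemma~\ref{lem 4.6}, we have the following corollary''), and your argument --- plug $\mathcal{P}_\mu(u)=0$ into \eqref{PhP}, use $h(t)\ge 0$, and observe that equality is attained at $t=1$ --- is precisely the intended derivation.
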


 \begin{lemma}\label{lem 4.8}
 Let $N\ge 3$, $c>0$, $\mu>0$ and $2+\f{4}{N}\le q < 2^*$. Then for any
 $u\in \mathcal{S}_c$, there exists a unique $t_u>0$ such that $t_u^{N/2}u_{t_u}\in \mathcal{M}_{\mu}(c)$.
 \end{lemma}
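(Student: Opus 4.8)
The plan is to recast the problem as the existence of a unique critical point of a scalar fiber map. Fix $u\in\mathcal{S}_c$ and set $\psi_u(t):=\Phi_{\mu}\big(t^{N/2}u_t\big)$ for $t>0$. Since $t^{N/2}u_t$ preserves the $L^2$-norm, the scaling identities $\|\nabla(t^{N/2}u_t)\|_2^2=t^2\|\nabla u\|_2^2$, $\|t^{N/2}u_t\|_{2^*}^{2^*}=t^{2^*}\|u\|_{2^*}^{2^*}$ and $\|t^{N/2}u_t\|_q^q=t^{q\gamma_q}\|u\|_q^q$ give
\begin{equation*}
 \psi_u(t)=\f{t^2}{2}\|\nabla u\|_2^2-\f{t^{2^*}}{2^*}\|u\|_{2^*}^{2^*}-\f{\mu t^{q\gamma_q}}{q}\|u\|_q^q ,
\end{equation*}
so that $t\psi_u'(t)=\mathcal{P}_{\mu}\big(t^{N/2}u_t\big)$, with $\mathcal{P}_{\mu}$ as in \eqref{Pu}. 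Because $t^{N/2}u_t\in\mathcal{S}_c$ for every $t>0$, one has $t^{N/2}u_t\in\mathcal{M}_{\mu}(c)$ if and only if $\psi_u'(t)=0$; hence it suffices to show $\psi_u'$ has exactly one zero on $(0,\infty)$.

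I would then analyse $\psi_u'(t)/t=\|\nabla u\|_2^2-t^{2^*-2}\|u\|_{2^*}^{2^*}-\mu\gamma_q\,t^{q\gamma_q-2}\|u\|_q^q$. Here $2^*-2=\f{4}{N-2}>0$ always, while $q\gamma_q-2=\f{N(q-2)}{2}-2\ge 0$ since $q\ge 2+\f{4}{N}$, with equality precisely when $q=\bar q$; moreover $\|u\|_{2^*}^{2^*},\|u\|_q^q,\|\nabla u\|_2^2>0$ because $u\ne 0$. If $\bar q<q<2^*$, both subtracted terms are strictly increasing in $t$ and vanish as $t\to 0^+$, so $t\mapsto\psi_u'(t)/t$ is continuous and strictly decreasing, equals $\|\nabla u\|_2^2>0$ as $t\to0^+$, and tends to $-\infty$ as $t\to+\infty$; hence it has a unique zero $t_u>0$, and uniqueness of $t_u$ follows.

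In the borderline case $q=\bar q$ one has $q\gamma_q-2=0$, and $\psi_u'(t)/t=\big(\|\nabla u\|_2^2-\mu\gamma_{\bar q}\|u\|_{\bar q}^{\bar q}\big)-t^{2^*-2}\|u\|_{2^*}^{2^*}$. Using the Gagliardo--Nirenberg inequality \eqref{GN} with $\bar q\gamma_{\bar q}=2$ and $\|u\|_2^2=c$ gives $\|u\|_{\bar q}^{\bar q}=\mathcal{C}_{N,\bar q}^{\bar q}c^{2/N}\|\nabla u\|_2^2$, so the constant coefficient equals $\big(1-\mu\gamma_{\bar q}\mathcal{C}_{N,\bar q}^{\bar q}c^{2/N}\big)\|\nabla u\|_2^2\ge \f12\|\nabla u\|_2^2>0$ under the assumption $0<\mu\le\big(2\gamma_{\bar q}c^{2/N}\mathcal{C}_{N,\bar q}^{\bar q}\big)^{-1}$ of Theorem \ref{thm 1.3}(ii); then $\psi_u'(\cdot)/\cdot$ is again strictly decreasing from a positive value to $-\infty$ and has a unique zero $t_u>0$. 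This last point is the only delicate step: in the $L^2$-critical borderline the ``linear part'' of $\psi_u'$ carries a constant coefficient whose positivity is not automatic, and this is exactly where the smallness condition on $\mu$ is needed; once that coefficient is positive, uniqueness is immediate from the strict monotonicity observed in both cases, and I do not expect any genuine obstacle beyond this bookkeeping.
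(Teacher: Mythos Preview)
Your argument is the standard fiber-map analysis, which is precisely what the paper has in mind (it omits the proof as ``standard''). Two small corrections are worth making.

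First, the Gagliardo--Nirenberg relation \eqref{GN} is an inequality, not an identity; you should write $\|u\|_{\bar q}^{\bar q}\le \mathcal{C}_{N,\bar q}^{\bar q}c^{2/N}\|\nabla u\|_2^{2}$ rather than ``$=$''. This does not affect your conclusion: the inequality still yields the lower bound $\|\nabla u\|_2^{2}-\mu\gamma_{\bar q}\|u\|_{\bar q}^{\bar q}\ge (1-\mu\gamma_{\bar q}\mathcal{C}_{N,\bar q}^{\bar q}c^{2/N})\|\nabla u\|_2^{2}$ that you need.

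Second, your observation about the borderline case is correct and in fact sharper than the lemma's stated hypotheses. When $q=\bar q$ and $\mu$ is large, the constant term $\|\nabla u\|_2^{2}-\mu\gamma_{\bar q}\|u\|_{\bar q}^{\bar q}$ can be nonpositive for some $u\in\mathcal{S}_c$ (take $u$ near a Gagliardo--Nirenberg extremizer), and then $\psi_u'(t)<0$ for all $t>0$ and no $t_u$ exists. So the lemma as literally stated (for all $\mu>0$) is not quite true at $q=\bar q$; the smallness restriction $0<\mu\le (2\gamma_{\bar q}c^{2/N}\mathcal{C}_{N,\bar q}^{\bar q})^{-1}$ is genuinely needed there. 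The paper's downstream uses of the lemma (Lemmas~\ref{lem 4.9}--\ref{lem 4.14} and Theorem~\ref{thm 1.3}) all carry the hypothesis $0<\mu<\alpha(N,q)$ with $\alpha(N,\bar q)=(2\gamma_{\bar q}c^{2/N}\mathcal{C}_{N,\bar q}^{\bar q})^{-1}$, so nothing is damaged, but you are right to flag the point.
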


 \par
   The proof of Lemma \ref{lem 4.8} is standard, so we omit it.

 \vskip4mm
 \par
    From Corollary \ref{cor 4.7} and Lemma \ref{lem 4.8}, we have the following lemma.

 \begin{lemma}\label{lem 4.9}
 Let $N\ge 3$, $c>0$, $\mu>0$ and $2+\f{4}{N}\le q<2^*$. Then
 \begin{equation}\label{hmc}
   \inf_{u\in \mathcal{M}_{\mu}(c)}\Phi_{\mu}(u)
   :=\hat{m}_{\mu}(c)=\inf_{u\in \mathcal{S}_c}\max_{t > 0}\Phi_{\mu}\left(t^{N/2}u_t\right).
 \end{equation}
 \end{lemma}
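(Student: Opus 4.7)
The plan is to establish the two inequalities separately, each of them being an almost direct consequence of Corollary \ref{cor 4.7} and Lemma \ref{lem 4.8}.

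For the inequality $\hat{m}_{\mu}(c)\ge \inf_{u\in \mathcal{S}_c}\max_{t>0}\Phi_{\mu}(t^{N/2}u_t)$, I would fix an arbitrary $u\in \mathcal{M}_{\mu}(c)$ and apply Corollary \ref{cor 4.7} to rewrite
$$
 \Phi_{\mu}(u)=\max_{t>0}\Phi_{\mu}\left(t^{N/2}u_t\right).
$$
Since $\mathcal{M}_{\mu}(c)\subset \mathcal{S}_c$, taking the infimum over $u\in \mathcal{M}_{\mu}(c)$ yields
$
 \hat{m}_{\mu}(c)\ge \inf_{u\in \mathcal{S}_c}\max_{t>0}\Phi_{\mu}(t^{N/2}u_t).
$

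For the reverse inequality, I would pick any $u\in \mathcal{S}_c$ and use Lemma \ref{lem 4.8} to produce a unique $t_u>0$ such that $v:=t_u^{N/2}u_{t_u}\in \mathcal{M}_{\mu}(c)$. The key observation, which is the only computation of substance here, is that the family $\{s^{N/2}v_s\}_{s>0}$ coincides with $\{r^{N/2}u_r\}_{r>0}$: indeed, setting $r=st_u$,
$$
 s^{N/2}v_s(x)=s^{N/2}t_u^{N/2}u(t_usx)=(st_u)^{N/2}u_{st_u}(x)=r^{N/2}u_r(x).
$$
Hence $\max_{s>0}\Phi_{\mu}(s^{N/2}v_s)=\max_{t>0}\Phi_{\mu}(t^{N/2}u_t)$. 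Applying Corollary \ref{cor 4.7} to $v\in \mathcal{M}_{\mu}(c)$ gives
$
 \Phi_{\mu}(v)=\max_{s>0}\Phi_{\mu}(s^{N/2}v_s)=\max_{t>0}\Phi_{\mu}(t^{N/2}u_t),
$
and since $v\in \mathcal{M}_{\mu}(c)$, the left-hand side is $\ge \hat{m}_{\mu}(c)$. Taking the infimum over $u\in \mathcal{S}_c$ yields the desired bound.

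There is no real obstacle here: the content is entirely packaged into Corollary \ref{cor 4.7} (the monotonicity of the fibering map, giving the maximum at the Pohozaev point) and Lemma \ref{lem 4.8} (existence and uniqueness of the projection onto $\mathcal{M}_{\mu}(c)$). The only thing to double-check is the scaling identity relating the two one-parameter families, which is a direct change of variables, and the fact that the bijection $s\mapsto r=st_u$ preserves the positive half-line so that the two maxima are literally equal. Combining the two inequalities yields the claimed minimax characterization of $\hat{m}_{\mu}(c)$.
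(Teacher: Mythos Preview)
Your proof is correct and follows exactly the approach the paper indicates: the paper does not give a detailed argument but simply states that Lemma \ref{lem 4.9} follows from Corollary \ref{cor 4.7} and Lemma \ref{lem 4.8}, which is precisely what you unpack via the two inequalities and the scaling identity.
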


 \par
   In what follows, we set $\alpha(N,q)=+\infty$ if $2+\f{4}{N}<q<2^*$ and $\alpha(N,\bar{q})=\f{1}{2\gamma_{\bar{q}}c^{2/N}\mathcal{C}_{N,\bar{q}}^{\bar{q}}}$.

 \begin{lemma}\label{lem 4.10}
 Let $N\ge 3$, $c>0$, $0<\mu<\alpha(N,q)$ and $2+\f{4}{N}\le q<2^*$. Then
 \begin{enumerate}[{\rm(i)}]
  \item there exists $\vartheta_c>0$ such that $\Phi_{\mu}(u)>0$ and $\mathcal{P}_{\mu}(u)>0$
  if $u\in A_{2\vartheta_c}$, and
 \begin{equation}\label{Ak}
  0<\sup_{u\in A_{\vartheta_c}}\Phi_{\mu}(u)<\inf\left\{\Phi_{\mu}(u): u\in \mathcal{S}_c,
   \ \|\nabla u\|_2^2= 2\vartheta_c \right\},
 \end{equation}
 where
 \begin{equation}\label{Ak1}
  A_{\vartheta_c}=\left\{u\in \mathcal{S}_c: \|\nabla u\|_2^2\le \vartheta_c\right\} \ \ \hbox{and} \ \
 A_{2\vartheta_c}=\left\{u\in \mathcal{S}_a: \|\nabla u\|_2^2\le 2\vartheta_c\right\};
 \end{equation}

 \item[{\rm(ii)}] $\hat{\Gamma}_{\mu,c}=\{\gamma\in \mathcal{C}([0,1],\mathcal{S}_c):\|\nabla \gamma(0)\|_2^2\le \vartheta_c, \Phi_{\mu}(\gamma(1))<0\}\ne \emptyset$ and
  \begin{align}\label{gG0}
   \hat{M}_{\mu}(c)
    &  :=   \inf_{\gamma\in \hat{\Gamma}_{\mu,c}}\max_{t\in[0,1]}\Phi_{\mu}(\gamma(t))
       \ge  \hat{\kappa}_{\mu,c}:=\inf\left\{\Phi_{\mu}(u): u\in \mathcal{S}_c, \|\nabla u\|_2^2
         = 2\vartheta_c \right\}\nonumber\\
    & >     \max_{\gamma\in \hat{\Gamma}_{\mu,c}}\max\{\Phi_{\mu}(\gamma(0)),\Phi_{\mu}(\gamma(1))\}.
\end{align}
\end{enumerate}
 \end{lemma}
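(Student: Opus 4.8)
The plan is to establish the two items of Lemma~\ref{lem 4.10} by a direct analysis of the functional $\Phi_\mu$ and the Pohozaev functional $\mathcal{P}_\mu$ near the ``bottom'' of $\mathcal{S}_c$, i.e.\ where $\|\nabla u\|_2^2$ is small, exactly mirroring the strategy of Lemma~\ref{lem 3.1} in the $L^2$-supercritical case. First I would exploit the Gagliardo--Nirenberg inequality \eqref{GN} and the Sobolev inequality \eqref{Sob} to control the subcritical and critical nonlinear terms: on $\mathcal{S}_c$ one has $\|u\|_q^q \le \mathcal{C}_{N,q}^q\, c^{(1-\gamma_q)q/2}\|\nabla u\|_2^{q\gamma_q}$ and $\|u\|_{2^*}^{2^*}\le \mathcal{S}^{-2^*/2}\|\nabla u\|_2^{2^*}$. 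Setting $\sigma:=\|\nabla u\|_2^2$, this gives
\begin{equation*}
  \Phi_\mu(u)\ge \tfrac12\sigma - \tfrac{\mu}{q}\mathcal{C}_{N,q}^q c^{(1-\gamma_q)q/2}\sigma^{q\gamma_q/2} - \tfrac{1}{2^*}\mathcal{S}^{-2^*/2}\sigma^{2^*/2},
\end{equation*}
and, since $2+\tfrac4N\le q<2^*$, one has $q\gamma_q\ge 2$ with $q\gamma_q=2$ only when $q=\bar q$; in that borderline case the quadratic term $\tfrac12\sigma - \tfrac{\mu}{q}\gamma_{\bar q}\mathcal{C}_{N,\bar q}^{\bar q}c^{2/N}\sigma$ stays coercive precisely because $\mu<\alpha(N,\bar q)=\big(2\gamma_{\bar q}c^{2/N}\mathcal{C}_{N,\bar q}^{\bar q}\big)^{-1}$, whereas when $q>\bar q$ the $\sigma^{q\gamma_q/2}$ term is genuinely higher order. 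Hence there is a small $\vartheta_c>0$ for which $g(\sigma):=\tfrac12\sigma - \tfrac{\mu}{q}\mathcal{C}_{N,q}^q c^{(1-\gamma_q)q/2}\sigma^{q\gamma_q/2}-\tfrac1{2^*}\mathcal{S}^{-2^*/2}\sigma^{2^*/2}$ is strictly increasing on $[0,2\vartheta_c]$ and strictly positive on $(0,2\vartheta_c]$, which yields $\Phi_\mu(u)>0$ on $A_{2\vartheta_c}\setminus\{\text{zero gradient}\}$ together with the strict gap \eqref{Ak}: choosing $\vartheta_c$ small enough one can force $g(\vartheta_c)<g(2\vartheta_c)$ and, after possibly shrinking $\vartheta_c$, also $\sup_{u\in A_{\vartheta_c}}\Phi_\mu(u)\le$ some value strictly below $\inf\{\Phi_\mu(u):\|\nabla u\|_2^2=2\vartheta_c\}$ by comparing the explicit one-variable bounds. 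The positivity of $\mathcal{P}_\mu$ on $A_{2\vartheta_c}$ is handled the same way: from \eqref{Pu}, $\mathcal{P}_\mu(u)=\sigma-\|u\|_{2^*}^{2^*}-\mu\gamma_q\|u\|_q^q\ge \sigma - \mathcal{S}^{-2^*/2}\sigma^{2^*/2}-\mu\gamma_q\mathcal{C}_{N,q}^q c^{(1-\gamma_q)q/2}\sigma^{q\gamma_q/2}$, which is $>0$ for $\sigma\in(0,2\vartheta_c]$ after shrinking $\vartheta_c$ once more, again using $\mu<\alpha(N,\bar q)$ in the critical case.

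For item (ii), I would first check $\hat\Gamma_{\mu,c}\ne\emptyset$: fix any $u\in\mathcal{S}_c$ with $\|\nabla u\|_2^2\le\vartheta_c$ (e.g.\ a suitable dilation of a fixed profile), and consider the scaling path $t\mapsto t^{N/2}u_t(x)=t^{N/2}u(tx)$, which stays in $\mathcal{S}_c$. As $t\to 0^+$ its gradient norm is $t^2\|\nabla u\|_2^2\to 0$, so the starting point can be taken inside $A_{\vartheta_c}$, while as $t\to+\infty$ one has $\Phi_\mu(t^{N/2}u_t)=\tfrac{t^2}{2}\|\nabla u\|_2^2-\tfrac{t^{2^*}}{2^*}\|u\|_{2^*}^{2^*}-\tfrac{\mu t^{q\gamma_q}}{q}\|u\|_q^q\to-\infty$ because $2^*>2$ and $2^*>q\gamma_q$; truncating and reparametrising this half-line to $[0,1]$ produces an admissible $\gamma$, so $\hat\Gamma_{\mu,c}\ne\emptyset$. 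The min-max inequality in \eqref{gG0} is then a soft topological argument: any $\gamma\in\hat\Gamma_{\mu,c}$ has $\|\nabla\gamma(0)\|_2^2\le\vartheta_c<2\vartheta_c$ and $\Phi_\mu(\gamma(1))<0$, hence by item (i) $\|\nabla\gamma(1)\|_2^2>2\vartheta_c$ (since $\Phi_\mu>0$ on the set where the gradient norm is $\le 2\vartheta_c$, except possibly at gradient zero, which forces $\Phi_\mu\ge 0$); by continuity of $t\mapsto\|\nabla\gamma(t)\|_2^2$ there is $t_0$ with $\|\nabla\gamma(t_0)\|_2^2=2\vartheta_c$, whence $\max_{[0,1]}\Phi_\mu(\gamma(t))\ge\Phi_\mu(\gamma(t_0))\ge\hat\kappa_{\mu,c}$. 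Taking the infimum over $\gamma$ gives $\hat M_\mu(c)\ge\hat\kappa_{\mu,c}$, and the endpoint bound $\hat\kappa_{\mu,c}>\max\{\Phi_\mu(\gamma(0)),\Phi_\mu(\gamma(1))\}$ follows from $\Phi_\mu(\gamma(1))<0<\hat\kappa_{\mu,c}$ and $\Phi_\mu(\gamma(0))\le\sup_{A_{\vartheta_c}}\Phi_\mu<\hat\kappa_{\mu,c}$ by \eqref{Ak}.

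The main obstacle, and the only part requiring real care, is the critical case $q=\bar q=2+\tfrac4N$: there the $L^2$-critical term competes with the kinetic term at the same homogeneity $\sigma$, so the coercivity of the quadratic part near $\sigma=0$ is not automatic and is exactly what the hypothesis $0<\mu\le\big(2\gamma_{\bar q}c^{2/N}\mathcal{C}_{N,\bar q}^{\bar q}\big)^{-1}$ buys; the inequality must be used sharply (the factor $\tfrac12$ versus $\tfrac{\mu}{q}\gamma_{\bar q}\mathcal{C}_{N,\bar q}^{\bar q}c^{2/N}=\mu\cdot\tfrac{\bar q\gamma_{\bar q}}{q}\cdot\ldots$—one should double-check the constant bookkeeping so that the coefficient of $\sigma$ in $g$ is nonnegative, in fact bounded below by a positive constant on the relevant range after discarding the genuinely superlinear $\sigma^{2^*/2}$ tail). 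Everything else — choosing $\vartheta_c$ small, the three successive shrinkings to get \eqref{Ak} and the positivity of $\mathcal{P}_\mu$, and the intermediate-value argument for the min-max level — is routine one-variable calculus and continuity, so I would state those estimates and leave the elementary verifications to the reader, as the paper already does for the companion Lemmas \ref{lem 3.1}, \ref{lem 4.6} and \ref{lem 4.8}.
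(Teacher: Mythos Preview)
Your proposal is correct and follows essentially the same route as the paper: the Gagliardo--Nirenberg and Sobolev inequalities reduce (i) to one-variable calculus in $\sigma=\|\nabla u\|_2^2$, with the case split $q>\bar q$ (where $q\gamma_q>2$) versus $q=\bar q$ (where $q\gamma_q=2$ and the bound $\mu<\alpha(N,\bar q)$ is used to absorb the $L^2$-critical term into the kinetic one), and (ii) follows from the scaling path $t\mapsto t^{N/2}w_t$ plus the intermediate value theorem. The only point worth tightening is the constant check you flagged: since $\bar q\gamma_{\bar q}=2$, the condition $\mu<\big(2\gamma_{\bar q}c^{2/N}\mathcal{C}_{N,\bar q}^{\bar q}\big)^{-1}$ gives $\tfrac{\mu}{\bar q}c^{2/N}\mathcal{C}_{N,\bar q}^{\bar q}<\tfrac14$, leaving a definite coefficient $\ge\tfrac14$ in front of $\sigma$, exactly as the paper writes.
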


 \begin{proof}  i). We distinguish to two cases.
  \par
  Case 1). $2+\f{4}{N}< q<2^*$. In this case, one has $2<q\gamma_q<2^*$. By \eqref{Phu}, \eqref{Pu}, \eqref{Sob}
  and \eqref{GN}, one has
 \begin{eqnarray*}\label{P13}
   \Phi_{\mu}(u)
     &   =  &  \f{1}{2}\|\nabla u\|_2^2-\f{\mu}{q}\|u\|_q^q-\f{1}{2^*}\|u\|_{2^*}^{2^*}\nonumber\\
     & \ge  &  \|\nabla u\|_2^2\left[\f{1}{2}-\f{\mu}{q}c^{(1-\gamma_q)q/2}\mathcal{C}_{N,q}^q
                \|\nabla u\|_2^{q\gamma_q-2}-\f{1}{2^*}\mathcal{S}^{-\f{2^*}{2}}
                \|\nabla u\|_2^{2^*-2}\right],\ \ \forall \ u\in \mathcal{S}_{c}
 \end{eqnarray*}
 and
 \begin{eqnarray*}\label{P14}
   \mathcal{P}_{\mu}(u)
    &   =  &  \|\nabla u\|_2^{2}-\mu\gamma_q\|u\|_q^q-\|u\|_{2^*}^{2^*}\\
    & \ge  &  \|\nabla u\|_2^2\left[1-\mu\gamma_qc^{(1-\gamma_q)q/2}\mathcal{C}_{N,q}^q\|\nabla u\|_2^{q\gamma_q-2}
               -\mathcal{S}^{-\f{2^*}{2}}\|\nabla u\|_2^{2^*-2}\right],\ \ \forall \ u\in \mathcal{S}_{c}.
 \end{eqnarray*}
 Since $q\gamma_q>2$, the above inequalities show there exists $\vartheta_c>0$ such that i) holds.

 \par
  Case 2). $q=2+\f{4}{N}=\bar{q}<2^*$. In this case $2=q\gamma_q<2^*$. By \eqref{Phu}, \eqref{Pu}, \eqref{Sob}
 and \eqref{GN}, one has
 \begin{eqnarray*}\label{P13}
   \Phi_{\mu}(u)
     &   =  & \f{1}{2}\|\nabla u\|_2^2-\f{\mu}{\bar{q}}\|u\|_{\bar{q}}^{\bar{q}}
               -\f{1}{2^*}\|u\|_{2^*}^{2^*}\nonumber\\
     & \ge  & \f{1}{2}\|\nabla u\|_2^2-\f{\mu}{\bar{q}}c^{2/N}\mathcal{C}_{N,\bar{q}}^{\bar{q}}
              \|\nabla u\|_2^{2}-\f{1}{2^*}\mathcal{S}^{-\f{2^*}{2}}\|\nabla u\|_2^{2^*}\nonumber\\
     & \ge  & \|\nabla u\|_2^2\left[\f{1}{4}-\f{1}{2^*}\mathcal{S}^{-\f{2^*}{2}}\|\nabla u\|_2^{2^*-2}\right],
             \ \ \forall \ u\in \mathcal{S}_{c}
 \end{eqnarray*}
 and
 \begin{eqnarray*}\label{P14}
   \mathcal{P}_{\mu}(u)
     &   =  &  \|\nabla u\|_2^{2}-\mu\gamma_{\bar{q}}\|u\|_{\bar{q}}^{\bar{q}}-\|u\|_{2^*}^{2^*}\\
     & \ge  &  \|\nabla u\|_2^2-\mu\gamma_{\bar{q}} c^{2/N}\mathcal{C}_{N,\bar{q}}^{\bar{q}}
               \|\nabla u\|_2^{2}-\mathcal{S}^{-\f{2^*}{2}}\|\nabla u\|_2^{2^*}\nonumber\\
     & \ge  & \|\nabla u\|_2^2\left[\f{1}{2}-\mathcal{S}^{-\f{2^*}{2}}\|\nabla u\|_2^{2^*-2}\right],
             \ \ \forall \ u\in \mathcal{S}_{c}.
 \end{eqnarray*}
 The above inequalities show there exists $\vartheta_c>0$ such that i) holds also.

 \par
   ii).  For any given $w\in \mathcal{S}_c$, we have $\|t^{N/2}w_t\|_2=\|w\|_2$, and so $t^{N/2}w_t\in \mathcal{S}_c$
 for every $t>0$. Then \eqref{Phu} yields
 \begin{equation}\label{Pt1}
  \Phi_{\mu}\left(t^{N/2}w_t\right)=\f{t^2}{2}\|\nabla w\|_2^{2}-\f{\mu t^{q\gamma_q}}{q}\|w\|_q^q
             -\f{t^{2^*}}{2^*}\|w\|_{2^*}^{2^*} \to -\infty \ \ \hbox{as}\ \ t\to +\infty.
 \end{equation}
 Thus we can deduce that there exist $t_1>0$ small enough and $t_2>0$ large enough such that
 \begin{equation}\label{t12}
   \left\|\nabla \left(t_1^{N/2}w_{t_1}\right)\right\|_2^2=t_1^2\|\nabla w\|_2^2 \le \vartheta_c, \ \
   \hbox{and}\ \ \Phi_{\mu}\left(t_2^{N/2}w_{t_2}\right)<0.
 \end{equation}
 Let $\gamma_0(t):=[t_1+(t_2-t_1)t]^{N/2}w_{t_1+(t_2-t_1)t}$. Then $\gamma_0\in \hat{\Gamma}_{\mu,c}$, and so
 $\hat{\Gamma}_{\mu,c}\ne \emptyset$. Now using the intermediate value theorem,
 for any $\gamma\in\hat{\Gamma}_{\mu,c}$, there exists $t_0\in (0,1)$, depending on $\gamma$, such that
 $\|\nabla \gamma(t_0)\|_2^2=2\vartheta_c$ and
 \begin{equation*}
  \max_{t\in[0,1]}\Phi_{\mu}(\gamma(t))\ge \Phi_{\mu}(\gamma(t_0))\ge
  \inf\left\{\Phi_{\mu}(u): u\in \mathcal{S}_c, \|\nabla u\|_2^2= 2\vartheta_c \right\},
 \end{equation*}
 which, together with the arbitrariness of $\gamma\in\hat{\Gamma}_{\mu,c}$, implies
 \begin{equation}\label{E14}
  \hat{M}_{\mu}(c)=\inf_{\gamma\in \hat{\Gamma}_{\mu,c}}\max_{t\in[0,1]}\Phi_{\mu}(\gamma(t))\ge
  \inf\left\{\Phi_{\mu}(u): u\in \mathcal{S}_c, \|\nabla u\|_2^2= 2\vartheta_c \right\}.
 \end{equation}
 Hence, \eqref{gG0} follows directly from  \eqref{Ak} and \eqref{E14}, and the proof is completed.

 \end{proof}

 \begin{lemma}\label{lem 4.11}
  Let $N\ge 3$, $c>0$, $0<\mu<\alpha(N,q)$ and $2+\f{4}{N}\le q<2^*$. Then
 \begin{equation}\label{mcd}
   \hat{M}_{\mu}(c)=\hat{m}_{\mu}(c).
 \end{equation}
 \end{lemma}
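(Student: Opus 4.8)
The equality $\hat{M}_{\mu}(c)=\hat{m}_{\mu}(c)$ is proved by establishing the two inequalities separately, using Lemma \ref{lem 4.9} (the minimax characterization $\hat{m}_{\mu}(c)=\inf_{u\in\mathcal{S}_c}\max_{t>0}\Phi_{\mu}(t^{N/2}u_t)$) together with the geometry set up in Lemma \ref{lem 4.10}.

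First I would prove $\hat{M}_{\mu}(c)\le \hat{m}_{\mu}(c)$. Fix an arbitrary $u\in\mathcal{S}_c$. As in the proof of Lemma \ref{lem 4.10}(ii), the map $t\mapsto t^{N/2}u_t$ defines a curve in $\mathcal{S}_c$; using \eqref{Pt1} we may pick $t_1>0$ small and $t_2>0$ large so that $\|\nabla(t_1^{N/2}u_{t_1})\|_2^2\le\vartheta_c$ and $\Phi_{\mu}(t_2^{N/2}u_{t_2})<0$. Rescaling the parameter linearly (exactly the $\gamma_0$ of Lemma \ref{lem 4.10}(ii)) produces a path $\gamma\in\hat{\Gamma}_{\mu,c}$ whose image is contained in $\{s^{N/2}u_s:s\in[t_1,t_2]\}$, hence
\[
  \max_{t\in[0,1]}\Phi_{\mu}(\gamma(t))\le \max_{s>0}\Phi_{\mu}\left(s^{N/2}u_s\right).
\]
Taking the infimum over $\gamma\in\hat{\Gamma}_{\mu,c}$ on the left and then over $u\in\mathcal{S}_c$ on the right, and invoking Lemma \ref{lem 4.9}, yields $\hat{M}_{\mu}(c)\le\hat{m}_{\mu}(c)$.

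For the reverse inequality $\hat{M}_{\mu}(c)\ge\hat{m}_{\mu}(c)$, take any $\gamma\in\hat{\Gamma}_{\mu,c}$. The key point is that the path must cross the Pohozaev manifold $\mathcal{M}_{\mu}(c)$. Indeed, at $t=0$ we have $\|\nabla\gamma(0)\|_2^2\le\vartheta_c$, so by Lemma \ref{lem 4.10}(i), $\mathcal{P}_{\mu}(\gamma(0))>0$ (after shrinking $\vartheta_c$ if necessary so that $A_{\vartheta_c}$ lies in the region where $\mathcal{P}_\mu>0$); while at $t=1$, using Lemma \ref{lem 4.8} there is a unique $t_{\gamma(1)}>0$ with $t_{\gamma(1)}^{N/2}(\gamma(1))_{t_{\gamma(1)}}\in\mathcal{M}_{\mu}(c)$, and since $\Phi_{\mu}(\gamma(1))<0\le\hat m_\mu(c)=\Phi_\mu(t_{\gamma(1)}^{N/2}(\gamma(1))_{t_{\gamma(1)}})$ while $\Phi_\mu(s^{N/2}(\gamma(1))_s)>0$ for all small $s>0$, one checks $t_{\gamma(1)}<1$, forcing $\mathcal{P}_{\mu}(\gamma(1))<0$ by the sign analysis of $\tilde\phi_{\gamma(1)}'$. (Alternatively: $\mathcal{P}_\mu(\gamma(1))<0$ because $\|\nabla\gamma(1)\|_2$ is large — since $\Phi_\mu(\gamma(1))<0$ rules out the small-gradient regime of part (i) — and in that regime the Sobolev-critical term dominates.) By continuity of $t\mapsto\mathcal{P}_{\mu}(\gamma(t))$ there is $t_*\in(0,1)$ with $\gamma(t_*)\in\mathcal{M}_{\mu}(c)$, hence
\[
  \max_{t\in[0,1]}\Phi_{\mu}(\gamma(t))\ge \Phi_{\mu}(\gamma(t_*))\ge \inf_{\mathcal{M}_{\mu}(c)}\Phi_{\mu}=\hat m_\mu(c).
\]
Taking the infimum over $\gamma\in\hat{\Gamma}_{\mu,c}$ gives $\hat{M}_{\mu}(c)\ge\hat{m}_{\mu}(c)$, and combining the two bounds completes the proof.

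The main obstacle is the reverse inequality, specifically making the intermediate-value argument for $\mathcal{P}_{\mu}$ airtight: one must ensure that the endpoint conditions genuinely pin down the sign of $\mathcal{P}_{\mu}(\gamma(0))$ and $\mathcal{P}_{\mu}(\gamma(1))$. The first sign is handled cleanly by Lemma \ref{lem 4.10}(i) (choosing $\vartheta_c$ so that $\mathcal{P}_\mu>0$ on $A_{2\vartheta_c}$, which that lemma already gives). The second requires knowing that $\Phi_{\mu}(\gamma(1))<0$ cannot happen when $\|\nabla\gamma(1)\|_2^2\le 2\vartheta_c$ — which is exactly part (i) again, since $\Phi_\mu>0$ there — so $\gamma(1)$ lies in the large-gradient regime where, by the coercivity-type estimates in the proof of Lemma \ref{lem 4.10}(i) read in reverse, $\mathcal{P}_{\mu}(\gamma(1))<0$. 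These are all routine once the auxiliary facts from Lemmas \ref{lem 4.8}--\ref{lem 4.10} are in hand; no genuinely new estimate is needed.
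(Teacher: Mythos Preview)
Your proposal is correct and follows the same two-inequality structure as the paper. The only notable difference is in how you obtain $\mathcal{P}_{\mu}(\gamma(1))<0$: you argue via the fibering map and Lemma~\ref{lem 4.8} (locating the unique maximizer $t_{\gamma(1)}<1$ and reading off the sign of $\psi'(1)$), whereas the paper gets it in one line from Lemma~\ref{lem 4.6} by letting $t\to 0$ in \eqref{PhP}, which yields the pointwise inequality $\mathcal{P}_{\mu}(u)\le 2\Phi_{\mu}(u)$ for all $u\in\mathcal{S}_c$, so $\mathcal{P}_{\mu}(\gamma(1))\le 2\Phi_{\mu}(\gamma(1))<0$ is immediate. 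Your fibering argument is valid but more laborious; your ``alternative'' (large gradient forces $\mathcal{P}_\mu<0$) is heuristic and would need justification, so you should drop it or replace it by the paper's clean inequality.
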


 \begin{proof}  We first prove that $\hat{M}_{\mu}(c)\le \hat{m}_{\mu}(c)$. For any $u\in \mathcal{M}_{\mu}(c)$,
 there exist $t_1>0$ small enough and $t_2>1$ large enough such that
 \begin{equation*}\label{t12}
   \left\|\nabla \left(t_1^{N/2}u_{t_1}\right)\right\|_2^2=t_1^2\|\nabla u\|_2^2 \le \vartheta_c, \ \
   \hbox{and}\ \ \Phi_{\mu}\left(t_2^{N/2}u_{t_2}\right)<0.
 \end{equation*}
 Letting
 $$
  \hat{\gamma}(t):=[(1-t) t_1+tt_2]^{N/2}u_{(1-t) t_1+tt_2},\ \ \forall\ t\in[0,1].
 $$
 Then $\hat{\gamma}\in \hat{\Gamma}_{\mu,c}$. By \eqref{Pmax} and the definition of $\hat{M}_{\mu}(c)$, we have
 $$
  \hat{M}_{\mu}(c)\le \max_{t\in [0,1]}\Phi_{\mu}(\hat{\gamma}(t))=\Phi_{\mu}(u),
 $$
 and so $\hat{M}_{\mu}(c)\le \hat{m}_{\mu}(c)$.

 On the other hand, by \eqref{PhP} with $t\to 0$, we have
 \begin{equation*}
  \mathcal{P}_{\mu}(u)
  \le 2\Phi_{\mu}(u),\ \ \forall\ u\in \mathcal{S}_c.
 \end{equation*}
 which implies
 \begin{equation*}
  \mathcal{P}_{\mu}(\gamma(1)) \le 2\Phi_{\mu}(\gamma(1))<0,\ \ \forall\ \gamma\in \hat{\Gamma}_{\mu,c}.
 \end{equation*}
 Since $\|\gamma(0)\|_2^2\le \vartheta_c$, by (i) of Lemma \ref{lem 4.10}, we have $\mathcal{P}_{\mu}(\gamma(0))>0$.
 Hence, any path in $\hat{\Gamma}_{\mu,c}$ has to cross $\mathcal{M}_{\mu}(c)$. This shows that
 \begin{equation*}
 \max_{t\in [0,1]}\Phi_{\mu}(\gamma(t)) \ge \inf_{u\in \mathcal{M}_{\mu}(c)}\Phi_{\mu}(u)=\hat{m}_{\mu}(c),
 \ \ \forall\ \gamma\in \hat{\Gamma}_{\mu,c},
 \end{equation*}
 and so $\hat{M}_{\mu}(c)\ge \hat{m}_{\mu}(c)$ due to the arbitrariness of $\gamma$.
 Therefore, $\hat{M}_{\mu}(c) = \hat{m}_{\mu}(c)$ for any $c>0$, and the proof is completed.
 \end{proof}

 \par
   Set
 \begin{eqnarray*}\label{Ga7}
   \tilde{\hat{\Gamma}}_{\mu,c}
    & :=  & \left\{\tilde{\gamma}\in \mathcal{C}([0,1], \mathcal{S}_c\times \R):
             \tilde{\gamma}(0)=(\tilde{\gamma}_1(0),0), \|\nabla\tilde{\gamma}_1(0)\|_2^2\le \vartheta_c, \tilde{\Phi}_{\mu}(\tilde{\gamma}(1))<0\right\}.
 \end{eqnarray*}
 Replace $\Gamma_{c}$ and $\tilde{\Gamma}_{c}$ in Lemmas \ref{lem 3.1} and \ref{lem 3.2} by $\hat{\Gamma}_{\mu,c}$
 and $\tilde{\hat{\Gamma}}_{\mu,c}$, respectively, we can prove the following lemma by the same arguments as
 in the proof of Lemma \ref{lem 3.2}.

 \begin{lemma}\label{lem 4.12}
 Let $N\ge 3$, $c>0$, $0<\mu<\alpha(N,q)$ and $2+\f{4}{N}\le q<2^*$. There exists a sequence $\{u_n\}\subset
 \mathcal{S}_c$ such that
 \begin{equation}\label{PCe3}
    \Phi_{\mu}(u_n)\rightarrow \hat{M}_{\mu}(c)>0, \ \   \Phi_{\mu}|_{\mathcal{S}_c}'(u_n)
    \rightarrow 0\ \ \mbox{and}\ \ \mathcal{P}_{\mu}(u_n)\rightarrow 0.
 \end{equation}
 \end{lemma}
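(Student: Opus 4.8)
The plan is to repeat the proof of Lemma~\ref{lem 3.2} almost verbatim, with the geometry of Lemma~\ref{lem 3.1} replaced by that of Lemma~\ref{lem 4.10}~(ii) and with $(\Phi,\mathcal{P})$ replaced throughout by $(\Phi_{\mu},\mathcal{P}_{\mu})$. First I would fix $E=H^1(\R^N)$, $H=L^2(\R^N)$ with the norms \eqref{H12}, $M=\mathcal{S}_c$, and introduce the dilated functional $\tilde{\Phi}_{\mu}(v,t):=\Phi_{\mu}(\beta(v,t))$ on $E\times\R$, where $\beta$ is defined in \eqref{Be}. Then $\tilde{\Phi}_{\mu}\in\mathcal{C}^1(E\times\R,\R)$ and, exactly as in the computation leading to \eqref{DIv},
\begin{equation*}
  \langle\tilde{\Phi}_{\mu}'(v,t),(w,s)\rangle=\langle\Phi_{\mu}'(\beta(v,t)),\beta(w,t)\rangle+s\,\mathcal{P}_{\mu}(\beta(v,t)),\qquad\forall\,(w,s)\in H^1(\R^N)\times\R.
\end{equation*}
Consequently, writing $u=\beta(v,t)$, the two estimates \eqref{H24} and \eqref{H25} carry over verbatim:
\begin{equation*}
  |\mathcal{P}_{\mu}(u)|\le\bigl\|\tilde{\Phi}_{\mu}|_{\mathcal{S}_c\times\R}'(v,t)\bigr\|,\qquad \bigl\|\Phi_{\mu}|_{\mathcal{S}_c}'(u)\bigr\|\le e^{|t|}\bigl\|\tilde{\Phi}_{\mu}|_{\mathcal{S}_c\times\R}'(v,t)\bigr\|.
\end{equation*}

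Next I would transfer the mountain pass geometry to $\mathcal{S}_c\times\R$. For $\tilde\gamma\in\tilde{\hat{\Gamma}}_{\mu,c}$ the path $\gamma:=\beta\circ\tilde\gamma$ lies in $\hat{\Gamma}_{\mu,c}$, because $\gamma(0)=\tilde\gamma_1(0)$ has $\|\nabla\gamma(0)\|_2^2=\|\nabla\tilde\gamma_1(0)\|_2^2\le\vartheta_c$, while $\Phi_{\mu}(\gamma(1))=\tilde{\Phi}_{\mu}(\tilde\gamma(1))$ sits below the endpoint level prescribed in $\hat{\Gamma}_{\mu,c}$; moreover $\max_{t}\Phi_{\mu}(\gamma(t))=\max_{t}\tilde{\Phi}_{\mu}(\tilde\gamma(t))$. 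Conversely, for $\gamma\in\hat{\Gamma}_{\mu,c}$ the lift $\tilde\gamma(t):=(\gamma(t),0)$ lies in $\tilde{\hat{\Gamma}}_{\mu,c}$ with the same maximal value. Hence the minimax value of $\tilde{\Phi}_{\mu}$ over $\tilde{\hat{\Gamma}}_{\mu,c}$ equals $\hat{M}_{\mu}(c)$, and Lemma~\ref{lem 4.10}~(ii) upgrades this to
\begin{equation*}
  \inf_{\tilde\gamma\in\tilde{\hat{\Gamma}}_{\mu,c}}\max_{t\in[0,1]}\tilde{\Phi}_{\mu}(\tilde\gamma(t))\ge\hat{\kappa}_{\mu,c}>\sup_{\tilde\gamma\in\tilde{\hat{\Gamma}}_{\mu,c}}\max\bigl\{\tilde{\Phi}_{\mu}(\tilde\gamma(0)),\tilde{\Phi}_{\mu}(\tilde\gamma(1))\bigr\},
\end{equation*}
which is precisely hypothesis \eqref{F40} for the triple $\bigl(\tilde{\Phi}_{\mu},\tilde\Upsilon,\tilde\theta\bigr)$ with $\tilde\Upsilon=\{(u,0):u\in\mathcal{S}_c,\ \|\nabla u\|_2^2\le\vartheta_c\}$ (which is closed in $\mathcal{S}_c\times\R$) and $\tilde\theta$ the endpoint threshold built into $\tilde{\hat{\Gamma}}_{\mu,c}$.

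With the geometry secured, I would choose for each $n$ a path $\gamma_n\in\hat{\Gamma}_{\mu,c}$ with $\max_{t\in[0,1]}\Phi_{\mu}(\gamma_n(t))\le\hat{M}_{\mu}(c)+\f{1}{n}$ (possible since $\hat{M}_{\mu}(c)$ is a finite infimum by Lemma~\ref{lem 4.10}~(ii)), set $\tilde\gamma_n(t):=(\gamma_n(t),0)\in\tilde{\hat{\Gamma}}_{\mu,c}$, and apply Corollary~\ref{cor 2.15} to $\tilde{\Phi}_{\mu}$. This produces $(v_n,t_n)\in\mathcal{S}_c\times\R$ with $\tilde{\Phi}_{\mu}(v_n,t_n)\to\hat{M}_{\mu}(c)$, $\min_{t\in[0,1]}\|(v_n,t_n)-(\gamma_n(t),0)\|_{E\times\R}\le\f{2}{\sqrt{n}}$ and $\bigl\|\tilde{\Phi}_{\mu}|_{\mathcal{S}_c\times\R}'(v_n,t_n)\bigr\|\le\f{8}{\sqrt{n}}$. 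The middle property forces $|t_n|\le\f{2}{\sqrt{n}}$, so $\{e^{|t_n|}\}$ is bounded; feeding $(v_n,t_n)$ into the two displayed estimates of the first paragraph and setting $u_n:=\beta(v_n,t_n)\in\mathcal{S}_c$ yields $\Phi_{\mu}(u_n)=\tilde{\Phi}_{\mu}(v_n,t_n)\to\hat{M}_{\mu}(c)$, $\mathcal{P}_{\mu}(u_n)\to0$ and $\Phi_{\mu}|_{\mathcal{S}_c}'(u_n)\to0$, while $\hat{M}_{\mu}(c)>0$ is again part of Lemma~\ref{lem 4.10}~(ii). I anticipate no genuine obstacle: the content of the lemma is entirely the abstract minimax Corollary~\ref{cor 2.15} read through the dilation $\beta$, and the one spot that needs a line of verification is the identification of the two minimax values above, which is forced by $\tilde{\Phi}_{\mu}=\Phi_{\mu}\circ\beta$ together with $\beta(\cdot,0)=\mathrm{id}$.
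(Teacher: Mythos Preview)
Your proposal is correct and follows exactly the route the paper indicates: the paper's own ``proof'' of Lemma~\ref{lem 4.12} consists of the single sentence that one should replace $\Gamma_c$, $\tilde{\Gamma}_c$ by $\hat{\Gamma}_{\mu,c}$, $\tilde{\hat{\Gamma}}_{\mu,c}$ and rerun the argument of Lemma~\ref{lem 3.2}, and you have spelled out precisely that rerun. Your extra care in noting that $|t_n|\le 2/\sqrt{n}$ (because the approximating paths $\tilde\gamma_n$ have second component identically $0$) is the one detail that makes the estimate $\|\Phi_{\mu}|_{\mathcal{S}_c}'(u_n)\|\le e^{|t_n|}\|\tilde{\Phi}_{\mu}|_{\mathcal{S}_c\times\R}'(v_n,t_n)\|$ effective, and it is implicit in the paper's proof of Lemma~\ref{lem 3.2} as well.
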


 \begin{lemma}\label{lem 4.13}
 Let $N\ge 3$, $c>0$, $0<\mu<\alpha(N,\mu)$ and $2+\f{4}{N}\le q<2^*$. Then the function $c \mapsto \hat{m}_{\mu}(c)$ is nonincreasing on $(0,+\infty)$. In particular, if $\hat{m}_{\mu}(c_1)$ is
 achieved, then $\hat{m}_{\mu}(c_1) > \hat{m}_{\mu}(c_2)$ for any $c_2 > c_1$.
 \end{lemma}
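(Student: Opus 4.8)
The plan is to combine the min-max characterization $\hat{m}_{\mu}(c)=\inf_{u\in\mathcal{S}_c}\max_{t>0}\Phi_{\mu}(t^{N/2}u_t)$ from Lemma~\ref{lem 4.9} with a mass-changing dilation. Fix $0<c_1<c_2$, set $\lambda:=(c_1/c_2)^{1/N}\in(0,1)$, and to each $u\in\mathcal{S}_{c_1}$ associate $w:=u(\lambda\,\cdot)$, which lies in $\mathcal{S}_{c_2}$ since $\|w\|_2^2=\lambda^{-N}\|u\|_2^2=c_2$. The elementary scaling relations $\|\nabla w\|_2^2=\lambda^{2-N}\|\nabla u\|_2^2$, $\|w\|_{2^*}^{2^*}=\lambda^{-N}\|u\|_{2^*}^{2^*}$, $\|w\|_q^q=\lambda^{-N}\|u\|_q^q$, combined with \eqref{Phu} and the change of variable $t=\lambda^{(N-2)/2}s$ (a bijection of $(0,+\infty)$ onto itself), reduce the fiber map of $w$ to that of $u$ via a direct computation:
\begin{equation*}
 \Phi_{\mu}\big(t^{N/2}w_t\big)\Big|_{t=\lambda^{(N-2)/2}s}
 =\Phi_{\mu}\big(s^{N/2}u_s\big)-\f{\mu(\lambda^{\beta}-1)}{q}\,s^{q\gamma_q}\|u\|_q^q,
 \qquad \beta:=\f{N}{4}\big[(q-2)(N-2)-4\big].
\end{equation*}
Since $2+\f{4}{N}\le q<2^*$ forces $(q-2)(N-2)<4$, one has $\beta<0$, hence $\lambda^{\beta}>1$ and the correction term is $\le 0$, and it is strictly negative for every $s>0$ because $\|u\|_q^q>0$.

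Taking the supremum over $t>0$ (equivalently over $s>0$) then gives $\max_{t>0}\Phi_{\mu}(t^{N/2}w_t)\le\max_{s>0}\Phi_{\mu}(s^{N/2}u_s)$, and since $w\in\mathcal{S}_{c_2}$ this yields $\hat{m}_{\mu}(c_2)\le\max_{s>0}\Phi_{\mu}(s^{N/2}u_s)$. Infimizing the right-hand side over $u\in\mathcal{S}_{c_1}$ and invoking Lemma~\ref{lem 4.9} produces $\hat{m}_{\mu}(c_2)\le\hat{m}_{\mu}(c_1)$, i.e. $c\mapsto\hat{m}_{\mu}(c)$ is nonincreasing on $(0,+\infty)$.

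For the strict decrease, suppose $\hat{m}_{\mu}(c_1)$ is attained at some $u^{*}\in\mathcal{M}_{\mu}(c_1)$ and apply the same construction with $u=u^{*}$, for an arbitrary $c_2>c_1$. By Lemma~\ref{lem 4.8} there is a unique $t_w>0$ with $t_w^{N/2}w_{t_w}\in\mathcal{M}_{\mu}(c_2)$, and by Corollary~\ref{cor 4.7} the fiber map of $w$ attains its maximum precisely at $t_w$; writing $t_w=\lambda^{(N-2)/2}s_w$ with $s_w>0$ and using Corollary~\ref{cor 4.7} for $u^{*}$ as well, the displayed identity gives
\begin{equation*}
 \hat{m}_{\mu}(c_2)\le\Phi_{\mu}\big(t_w^{N/2}w_{t_w}\big)
 =\Phi_{\mu}\big(s_w^{N/2}u^{*}_{s_w}\big)-\f{\mu(\lambda^{\beta}-1)}{q}\,s_w^{q\gamma_q}\|u^{*}\|_q^q
 <\Phi_{\mu}\big(s_w^{N/2}u^{*}_{s_w}\big)\le\Phi_{\mu}(u^{*})=\hat{m}_{\mu}(c_1),
\end{equation*}
the strict inequality being exactly the point where we use $s_w>0$, $\lambda^{\beta}>1$ and $\|u^{*}\|_q^q>0$. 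This proves $\hat{m}_{\mu}(c_1)>\hat{m}_{\mu}(c_2)$.

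The computations here are routine, so the point to get right is the bookkeeping of scaling exponents and the verification that $\beta<0$ holds strictly throughout the range $2+\f{4}{N}\le q<2^*$ — the borderline value $\beta=0$ occurs only at the Sobolev exponent $q=2^*$, which is excluded — so that the dilation genuinely lowers the energy. The strictness then propagates only because Corollary~\ref{cor 4.7} and Lemma~\ref{lem 4.8} guarantee that the relevant fiber maxima are attained at a positive value of the dilation parameter rather than as $t\to 0$ or $t\to+\infty$; this is the sole reason the monotonicity can be upgraded to a strict inequality once $\hat{m}_{\mu}(c_1)$ is achieved.
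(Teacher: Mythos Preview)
Your proof is correct and follows essentially the same strategy as the paper. The only cosmetic difference is the choice of mass-changing dilation: the paper uses $v(x)=\tau^{(2-N)/2}u(x/\tau)$ with $\tau=\sqrt{c_2/c_1}$, which preserves $\|\nabla\cdot\|_2$ and $\|\cdot\|_{2^*}$ directly so no reparameterization of the fiber variable is needed, whereas you use the pure spatial dilation $w=u(\lambda\,\cdot)$ with $\lambda=(c_1/c_2)^{1/N}$ and then absorb the mismatch via the substitution $t=\lambda^{(N-2)/2}s$; in fact $t_0^{N/2}w_{t_0}$ with $t_0=\lambda^{(N-2)/2}$ coincides with the paper's $v$, so the two computations are the same up to a change of variables.
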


 \begin{proof}
  For any $c_2 > c_1 > 0$, it follows from the definition of $\hat{m}_{\mu}(c_1)$ that there exists
 $\{u_n\} \subset \mathcal{M}_{\mu}(c_1)$ such that
 \begin{equation}\label{P24}
   \Phi_{\mu}(u_n)<\hat{m}_{\mu}(c_1)+\f{1}{n},\ \ \forall \ n\in \N.
 \end{equation}
 Let $\tau:=\sqrt{c_2/c_1}$ and $v_n(x):=\tau^{(2-N)/2}u_n(x/\tau)$. Then $\|\nabla v_n\|_2^2=\|\nabla u_n\|_2^2$,
 $\|v_n\|_{2^*}^{2^*}=\|u_n\|_{2^*}^{2^*}$ and $\|v_n\|_2^2=c_2$. By Lemma \ref{lem 4.8}, there exists $t_n > 0$ such that $t_n^{N/2}(v_n)_{t_n}\in \mathcal{M}_{\mu}(c_2)$. Then it follows from \eqref{Phu}, \eqref{P24} and Corollary
 \ref{cor 4.7} that
 \begin{eqnarray}\label{P25}
   \hat{m}_{\mu}(c_2)
   & \le & \Phi_{\mu}\left(t_n^{N/2}(v_n)_{t_n}\right)\nonumber\\
   &  =  & \f{t_n^2}{2}\|\nabla v_n\|_2^2-\f{t_n^{2^*}}{2^*}\|v_n\|_{2^*}^{2^*}
            -\f{\mu t_n^{q\gamma_q}}{q}\|v_n\|_{q}^{q}\nonumber\\
   &  =  & \f{t_n^2}{2}\|\nabla u_n\|_2^2-\f{t_n^{2^*}}{2^*}\|u_n\|_{2^*}^{2^*}
            -\f{\mu\tau^{N-q(N-2)/2}t_n^{q\gamma_q}}{q}\|u_n\|_{q}^{q}\nonumber\\
   &  <  &  \Phi_{\mu}\left(t_n^{N/2}(u_n)_{t_n}\right)\le \Phi_{\mu}(u_n)<\hat{m}_{\mu}(c_1)+\f{1}{n},
 \end{eqnarray}
 which shows that $\hat{m}_{\mu}(c_2) \le \hat{m}_{\mu}(c_1)$ by letting $n\to \infty$.

 \par
   If  $\hat{m}_{\mu}(c_1)$ is achieved, i.e., there exists $\tilde{u}\in \mathcal{M}_{\mu}(c_1)$ such that $\Phi_{\mu}(\tilde{u})=\hat{m}_{\mu}(c_1)$. By the same argument as in \eqref{P25}, we can obtain that
 $\hat{m}_{\mu}(c_2)<\hat{m}_{\mu}(c_1)$.

 \end{proof}

 \par
    Next, we give a precise estimation for the energy level $\hat{M}(c)$ given  by \eqref{gG0},
 which helps us to restore the compactness in the critical case. Different from the strategy in \cite{So-JFA},
 we shall introduced an alternative choice of testing functions which allows  to treat, in a unified way,
 the case $N = 3$, the case $N \ge 4$, the $L^2$-critical case $q=\bar{q}$ and the $L^2$-supercritical case
 $q>\bar{q}$ for \eqref{Pa}. To this end, for any fixed $c>0$, we choose $R_n> n^{2/3}$ be such that
 \begin{eqnarray}\label{Rn1}
   c
    &  =  & \omega_{N-1}A_N^2\left\{\f{1}{n^2}\int_{0}^{n^{5/3}}\f{s^{N-1}}{\left(1+s^2\right)^{N-2}}\mathrm{d}s
             +\left(\f{n}{1+n^{10/3}}\right)^{N-2}\right.\nonumber\\
    &     & \ \ \left.\times\f{2R_n^{N+2}-[(N+1)(N+2)R_n^2+2N(N+2)R_nn^{2/3}
              -N(N+1)n^{4/3}]n^{2N/3}}{N(N+1)(N+2)(R_n-n^{2/3})^2}\right\}.\nonumber\\
 \end{eqnarray}
 Note that
 \begin{equation}\label{Rn2}
   \int_{0}^{n^{5/3}}\f{s^{2}}{1+s^2}\mathrm{d}s=n^{5/3}-\arctan \left(n^{5/3}\right),
 \end{equation}
 \begin{equation}\label{Rn2+}
   \int_{0}^{n^{5/3}}\f{s^{3}}{\left(1+s^2\right)^2}\mathrm{d}s
     =\f{1}{2}\left[\log \left(1+n^{10/3}\right)-\f{n^{10/3}}{1+n^{10/3}}\right]
 \end{equation}
 and
 \begin{equation}\label{Rn3}
   \int_{0}^{n^{5/3}}\f{s^{N-1}}{\left(1+s^2\right)^{N-2}}\mathrm{d}s
     \le \f{1}{N-4}\left[1-\f{1}{\left(1+n^{10/3}\right)^{(N-4)/2}}\right],\ \ \forall \ N\ge 5.
 \end{equation}
 From \eqref{Rn1}, \eqref{Rn2}, \eqref{Rn2+} and \eqref{Rn3}, one can deduce that
 \begin{equation}\label{Rn4}
   \lim_{n\to\infty}\f{R_n}{n^{7(N-2)/3N}}= \left[\f{N(N+1)(N+2)c}{2\omega_{N-1}A_N^2}\right]^{1/N}.
 \end{equation}
 Now, we define function $\tilde{U}_n(x):=\tilde{\Theta}_n(|x|)$, where
 \begin{equation}\label{wn2}
   \tilde{\Theta}_n(r)=A_N
   \begin{cases}
     \left(\f{n}{1+n^2r^2}\right)^{\f{N-2}{2}}, \ \ & 0\le r < n^{\f{2}{3}};\\
     \left(\f{n}{1+n^{10/3}}\right)^{\f{N-2}{2}}\f{R_n-r}{R_n-n^{2/3}}, \ \ & n^{\f{2}{3}}\le r< R_n;\\
     0, \ \ & r\ge R_n.
   \end{cases}
 \end{equation}
 Computing directly, we have
 \begin{eqnarray}\label{N12}
    &     & \|\tilde{U}_n\|_2^2\nonumber\\
    &  =  & \int_{\R^N}|\tilde{U}_n|^2\mathrm{d}x
             =\omega_{N-1}\int_{0}^{+\infty}r^{N-1}|\tilde{\Theta}_n(r)|^2\mathrm{d}r\nonumber\\
    &  =  & \omega_{N-1}A_N^2\left[\int_{0}^{n^{2/3}}\f{n^{N-2}r^{N-1}}{\left(1+n^2r^2\right)^{N-2}}\mathrm{d}r
             +\left(\f{n}{1+n^{10/3}}\right)^{N-2}
             \int_{n^{2/3}}^{R_n}r^{N-1}\f{(R_n-r)^2}{(R_n-n^{2/3})^2}\mathrm{d}r\right]\nonumber\\
    &  =  & \omega_{N-1}A_N^2\left\{\f{1}{n^2}\int_{0}^{n^{5/3}}\f{s^{N-1}}{\left(1+s^2\right)^{N-2}}\mathrm{d}s
             +\left(\f{n}{1+n^{10/3}}\right)^{N-2}\right.\nonumber\\
    &     & \ \ \left.\times\f{2R_n^{N+2}-[(N+1)(N+2)R_n^2+2N(N+2)R_nn^{2/3}
              -N(N+1)n^{4/3}]n^{2N/3}}{N(N+1)(N+2)(R_n-n^{2/3})^2}\right\}\nonumber\\
    &  =  &  c,
 \end{eqnarray}
 \begin{eqnarray}\label{N14}
    &     & \|\nabla \tilde{U}_n\|_2^2\nonumber\\
    &  =  & \int_{\R^N}|\nabla \tilde{U}_n|^2\mathrm{d}x
             =\omega_{N-1}\int_{0}^{+\infty}r^{N-1}|\tilde{\Theta}_n'(r)|^2\mathrm{d}r\nonumber\\
    &  =  & \omega_{N-1}A_N^2\left[(N-2)^2\int_{0}^{n^{2/3}}\f{n^{N+2}r^{N+1}}{\left(1+n^2r^2\right)^{N}}\mathrm{d}r
             +\left(\f{n}{1+n^{10/3}}\right)^{N-2}
             \int_{n^{2/3}}^{R_n}\f{r^{N-1}}{(R_n-n^{2/3})^2}\mathrm{d}r\right]\nonumber\\
    &  =  & \omega_{N-1}A_N^2\left[(N-2)^2\int_{0}^{n^{5/3}}\f{s^{N+1}}{\left(1+s^2\right)^{N}}\mathrm{d}s
             +\f{R_n^{N}-n^{2N/3}}{N(R_n-n^{2/3})^2}\left(\f{n}{1+n^{10/3}}\right)^{N-2}\right]\nonumber\\
    &  =  & \mathcal{S}^{N/2}+\omega_{N-1}A_N^2\left[-(N-2)^2\int_{n^{5/3}}^{+\infty}\f{s^{N+1}}
             {\left(1+s^2\right)^{N}}\mathrm{d}s
             +\f{R_n^{N}-n^{2N/3}}{N(R_n-n^{2/3})^2}\left(\f{n}{1+n^{10/3}}\right)^{N-2}\right]\nonumber\\
    &  =  &  \mathcal{S}^{N/2}+O\left(\f{1}{n^{14(N-2)/3N}}\right),\ \ n\to\infty,
 \end{eqnarray}
 \begin{eqnarray}\label{N16}
   \|\tilde{U}_n\|_{2^*}^{2^*}
    &  =  & \int_{\R^N}|\tilde{U}_n|^{2^*}\mathrm{d}x
             =\omega_{N-1}\int_{0}^{+\infty}r^{N-1}|\tilde{\Theta}_n(r)|^{2^*}\mathrm{d}r\nonumber\\
    &  =  & \omega_{N-1}A_N^{2^*}\left[\int_{0}^{n^{2/3}}\f{n^{N}r^{N-1}}{\left(1+n^2r^2\right)^{N}}\mathrm{d}r
             +\left(\f{n}{1+n^{10/3}}\right)^{N}
             \int_{n^{2/3}}^{R_n}\f{r^{N-1}(R_n-r)^{2^*}}{(R_n-n^{2/3})^{2^*}}\mathrm{d}r\right]\nonumber\\
    &  =  & \omega_{N-1}A_N^{2^*}\left[\int_{0}^{n^{5/3}}\f{s^{N-1}}{\left(1+s^2\right)^{N}}\mathrm{d}s
             +\left(\f{n}{1+n^{10/3}}\right)^{N}
             \int_{0}^{R_n-n^{2/3}}\f{s^{2^*}(R_n-s)^{N-1}}{(R_n-n^{2/3})^{2^*}}\mathrm{d}s\right]\nonumber\\
    &  =  & \mathcal{S}^{N/2}+\omega_{N-1}A_N^{2^*}\left[-\int_{n^{5/3}}^{+\infty}\f{s^{N-1}}
             {\left(1+s^2\right)^{N}}\mathrm{d}s\right.\nonumber\\
    &     & \ \ \left.+R_n^N\left(\f{n}{1+n^{10/3}}\right)^{N}
             \int_{0}^{1-n^{2/3}/R_n}\f{s^{2^*}(1-s)^{N-1}}{(1-n^{2/3}/R_n)^{2^*}}\mathrm{d}s\right]\nonumber\\
    &  =  &  \mathcal{S}^{N/2}+O\left(\f{1}{n^{14/3}}\right),\ \ n\to\infty
 \end{eqnarray}
 and
 \begin{eqnarray}\label{N20}
   \|\tilde{U}_n\|_q^q
    &  =  & \int_{\R^N}|\tilde{U}_n|^q\mathrm{d}x=\omega_{N-1}\int_{0}^{+\infty}r^{N-1}|\tilde{\Theta}_n(r)|^q\mathrm{d}r\nonumber\\
    & \ge & \omega_{N-1}A_N^q\int_{0}^{n^{2/3}}\f{n^{q(N-2)/2}r^{N-1}}
             {\left(1+n^2r^2\right)^{q(N-2)/2}}\mathrm{d}r\nonumber\\
    &  =  & \omega_{N-1}A_N^qn^{[(q-2)N-2q]/2}\int_{0}^{n^{5/3}}\f{s^{N-1}}
            {\left(1+s^2\right)^{q(N-2)/2}}\mathrm{d}s\nonumber\\
    &  =  & O\left(\f{1}{n^{N-(N-2)q/2}}\right),\ \ n\to\infty.
 \end{eqnarray}
 Both \eqref{N12} and \eqref{N14} imply that $\tilde{U}_n\in \mathcal{S}_c$.

 \begin{lemma}\label{lem 4.14}
  Let $N\ge 3$, $c>0$, $0<\mu<\alpha(N,q)$ and $2+\f{4}{N}\le q<2^*$.  Then there exists $\bar{n}\in\N$ such that
 \begin{equation}\label{mcv}
   \hat{M}_{\mu}(c)\le \sup_{t>0}\Phi\left(t^{N/2}(\tilde{U}_{\bar{n}})_{t}\right)< \f{\mathcal{S}^{N/2}}{N}.
 \end{equation}
 \end{lemma}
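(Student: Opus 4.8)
The first inequality in \eqref{mcv} is routine. The choice of $R_n$ in \eqref{Rn1} forces $\|U_n\|_2^2=c$ (see \eqref{N12}), so $U_{\bar n}\in\mathcal S_c$ for every $\bar n$, and Lemmas \ref{lem 4.9} and \ref{lem 4.11} give
\[
\hat M_\mu(c)=\hat m_\mu(c)=\inf_{u\in\mathcal S_c}\max_{t>0}\Phi_\mu\big(t^{N/2}u_t\big)\le\sup_{t>0}\Phi_\mu\big(t^{N/2}(U_{\bar n})_t\big).
\]
So everything reduces to showing $\sup_{t>0}\Phi_\mu(t^{N/2}(U_{\bar n})_t)<\mathcal S^{N/2}/N$ for $\bar n$ large. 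By \eqref{Phu} and the rescaling $x\mapsto tx$,
\[
g_n(t):=\Phi_\mu\big(t^{N/2}(U_n)_t\big)=\frac{t^2}{2}a_n-\frac{t^{2^*}}{2^*}b_n-\frac{\mu t^{q\gamma_q}}{q}d_n,\qquad a_n:=\|\nabla U_n\|_2^2,\ \ b_n:=\|U_n\|_{2^*}^{2^*},\ \ d_n:=\|U_n\|_q^q,
\]
and it is enough to bound $\sup_{t>0}g_n$.

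I would first localize the maximizer, keeping the $q$-term. Since $\bar q\le q<2^*$ we have $2\le q\gamma_q<2^*$, so $g_n(t)\to0$ as $t\to0^+$ and $g_n(t)\to-\infty$ as $t\to+\infty$; because $a_n,b_n\to\mathcal S^{N/2}>0$ and $d_n\to0$ (by \eqref{N14}, \eqref{N16}, \eqref{N20}), for $n$ large $g_n$ is positive somewhere and attains its supremum at a critical point $t_n>0$. Dividing $g_n'(t_n)=0$ by $t_n$ gives $a_n=b_nt_n^{2^*-2}+\mu\gamma_qd_nt_n^{q\gamma_q-2}$, whence $t_n^{2^*-2}\le a_n/b_n$ ($t_n$ bounded above) and $t_n\not\to0$ (otherwise the right-hand side would tend to $0$, contradicting $a_n\to\mathcal S^{N/2}$); hence $t_n\in[\underline t,\overline t]$ for all large $n$ with fixed $0<\underline t\le\overline t$. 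Bounding the first two terms of $g_n(t_n)$ by $\max_{s>0}\big(\tfrac{s^2}{2}a_n-\tfrac{s^{2^*}}{2^*}b_n\big)=\tfrac1N a_n^{N/2}b_n^{-(N-2)/2}$ and retaining the last, we get
\[
\sup_{t>0}g_n(t)=g_n(t_n)\le\frac1N\,\frac{a_n^{N/2}}{b_n^{(N-2)/2}}-\frac{\mu\,(\underline t)^{q\gamma_q}}{q}\,d_n.
\]

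It remains to insert the asymptotics. By \eqref{N14} and \eqref{N16}, $a_n=\mathcal S^{N/2}+O(n^{-14(N-2)/3N})$ and $b_n=\mathcal S^{N/2}+O(n^{-14/3})$, and since $14(N-2)/3N<14/3$ this gives $\tfrac1N a_n^{N/2}b_n^{-(N-2)/2}=\mathcal S^{N/2}/N+O(n^{-14(N-2)/3N})$. By \eqref{N20}, whose leading integral tends to a positive constant since $q\ge\bar q>N/(N-2)$ for every $N\ge3$, one has $d_n\ge c_1n^{-\delta}$ with $\delta:=N-(N-2)q/2$; here $\delta>0$ because $q<2^*$, and, $\delta$ being decreasing in $q$, $\delta\le4/N$ because $q\ge\bar q=2+4/N$. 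Hence $\sup_{t>0}g_n(t)\le\mathcal S^{N/2}/N+Cn^{-14(N-2)/3N}-c_2n^{-\delta}$ for large $n$, and $4/N<14(N-2)/(3N)$ holds exactly when $12<14(N-2)$, i.e. for every $N\ge3$; so $n^{-\delta}$ dominates the positive error and $\sup_{t>0}g_{\bar n}(t)<\mathcal S^{N/2}/N$ once $\bar n$ is large, which with the first inequality proves \eqref{mcv}. The step needing the most care --- and carrying the ``unified'' character of the lemma --- is the bookkeeping of remainder orders in \eqref{N14}, \eqref{N16}, \eqref{N20}: one must check that the dominant error in $a_n$ is really the $O(n^{-14(N-2)/3N})$ coming from the affine ``bridge'' of $U_n$ on $[n^{2/3},R_n]$ (the Sobolev-type tail contributing only $O(n^{-5(N-2)/3})$ for $N\ge3$), and that this one exponent beats $\delta$ simultaneously for all $q\in[\bar q,2^*)$ and all $N\ge3$; this uniformity is precisely what lets $N=3$, $N\ge4$, $q=\bar q$ and $q>\bar q$ be handled together, as announced in Remark \ref{rem1.2}.
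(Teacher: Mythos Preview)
Your proof is correct and follows essentially the same route as the paper's: insert the asymptotics \eqref{N14}, \eqref{N16}, \eqref{N20} into $\Phi_\mu(t^{N/2}(U_n)_t)$ and verify that the gain from the $\|U_n\|_q^q$ term (exponent $\delta=N-(N-2)q/2\le 4/N$) beats the error from $\|\nabla U_n\|_2^2$ (exponent $14(N-2)/3N$) uniformly for $N\ge 3$ and $q\in[\bar q,2^*)$. Your version is in fact more careful than the paper's, which simply writes down inequality \eqref{P36} and invokes \eqref{M32} without explicitly localizing the maximizer; you supply that step by showing $t_n\in[\underline t,\overline t]$, and you justify the first inequality cleanly via Lemmas \ref{lem 4.9} and \ref{lem 4.11} rather than the path definition. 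One small remark: your appeal to $d_n\to 0$ is correct but not literally contained in \eqref{N20}, which only records the lower bound; the matching upper bound (needed for the localization) requires also estimating the ``bridge'' contribution on $[n^{2/3},R_n]$, which is $O(n^{7(N-2)(2-q)/6})\to 0$ and hence harmless.
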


 \begin{proof}

 From \eqref{Phu}, \eqref{N14}, \eqref{N16} and \eqref{N20}, we have
 \begin{eqnarray}\label{P36}
  \Phi(t^{N/2}(\tilde{U}_n)_{t})
   &  =  & \f{t^2}{2}\|\nabla \tilde{U}_n\|_2^{2}-\f{\mu t^{(q-2)N/2}}{q}\|\tilde{U}_n\|_q^q
             -\f{t^{2^*}}{2^*}\|\tilde{U}_n\|_{2^*}^{2^*}\nonumber\\
   & \le & \f{t^2}{2}\left[\mathcal{S}^{N/2}+O\left(\f{1}{n^{14(N-2)/3N}}\right)\right]
            -\f{t^{2^*}}{2^*}\left[\mathcal{S}^{N/2}
            +O\left(\f{1}{n^{14/3}}\right)\right]\nonumber\\
   &     & \ \  -t^{(q-2)N/2}\left[O\left(\f{1}{n^{N-(N-2)q/2}}\right)\right]\nonumber\\
   & \le & \mathcal{S}^{N/2}\left(\f{t^2}{2}-\f{t^{2^*}}{2^*}\right)
            +\f{t^2}{2}\left[O\left(\f{1}{n^{14(N-2)/3N}}\right)\right]
            -\f{t^{2^*}}{2^*}\left[O\left(\f{1}{n^{14/3}}\right)\right]\nonumber\\
   &     & \ \  -t^{(q-2)N/2}\left[O\left(\f{1}{n^{N-(N-2)q/2}}\right)\right], \ \ \forall \ t>0.
 \end{eqnarray}
 Hence, it follows from \eqref{M32},   \eqref{gG0}, \eqref{P36} and the fact $2+\f{4}{N}\le q<2^*$ that
 there exists $\bar{n}\in\N$ such that \eqref{mcv} holds.

 \end{proof}

 \begin{proof} [Proof of Theorem {\rm\ref{thm 1.3}}] In view of Lemmas \ref{lem 4.12} and \ref{lem 4.14}, there exists $\{u_n\}\subset \mathcal{S}_c$
 such that
 \begin{equation}\label{P40}
   \|u_n\|_2^2=c, \ \ \Phi_{\mu}(u_n)\rightarrow \hat{M}_{\mu}(c)<\f{1}{N}\mathcal{S}^{N/2},
   \ \ \Phi_{\mu}(u_n)|_{\mathcal{S}_c}'\rightarrow 0, \ \ \mathcal{P}_{\mu}(u_n)\rightarrow 0.
 \end{equation}
 which, together with \eqref{Phu} and \eqref{Pu} that
 \begin{equation}\label{P41}
   \hat{M}_{\mu}(c)+o(1) = \f{1}{2}\|\nabla u_n\|_2^2-\f{1}{2^*}\|u_n\|_{2^*}^{2^*}
      -\f{\mu}{q}\|u_n\|_{q}^{q}
 \end{equation}
 and
 \begin{equation}\label{P42}
   o(1)=\|\nabla u_n\|_2^2 -\|u_n\|_{2^*}^{2^*}-\mu\gamma_q\|u_n\|_{q}^{q}.
 \end{equation}
 It follows from \eqref{P41} and \eqref{P42} that
 \begin{align}\label{P43}
   \hat{M}_{\mu}(c)+o(1)
    &  =    \f{1}{N}\|u_n\|_{2^*}^{2^*}+\mu\left(\f{q\gamma_q-2}{2q}\|u_n\|_{q}^{q}\right).
 \end{align}
 This shows that $\{\|u_n\|_{2^*}\}$ is bounded. From \eqref{P42}, \eqref{P43} and the H\"older inequality,
 one has
 \begin{eqnarray}\label{P45}
   \|\nabla u_n\|_2^2
   &  =  & \|u_n\|_{2^*}^{2^*}+\mu\gamma_q\|u_n\|_{q}^{q}+o(1)\nonumber\\
   & \le & \|u_n\|_{2^*}^{2^*}
            +\mu\gamma_q\|u_n\|_{2}^{2(2^*-q)/(2^*-2)}\|u_n\|_{2^*}^{2^*(q-2)/(2^*-2)}+o(1)\nonumber\\
   &  =  & \|u_n\|_{2^*}^{2^*}+\mu\gamma_qc^{(2^*-q)/(2^*-2)}\|u_n\|_{2^*}^{2^*(q-2)/(2^*-2)}+o(1).
 \end{eqnarray}
 This shows that $\{u_n\}$ is bounded in $H^1(\R^N)$. Let $\delta:=\limsup_{n\to\infty}\sup_{y\in \R^N}\int_{B_1(y)}|u_n|^2\mathrm{d}x$. We show that $\delta >0$. Otherwise, in light of Lions' concentration compactness principle \cite[Lemma 1.21]{WM}, $\|u_n\|_q \rightarrow 0$. From \eqref{P41} and \eqref{P42},
 one can get
 \begin{equation}\label{P53}
   \f{1}{N}\|u_n\|_{2^*}^{2^*} = \hat{M}_{\mu}(c)+o(1),
 \end{equation}
 which, together with \eqref{P42}, yields
 \begin{equation}\label{P54}
   \|\nabla u_n\|_2^2  =  \|u_n\|_{2^*}^{2^*}+\mu\gamma_q\|u_n\|_{q}^{q}+o(1)
     = N\hat{M}_{\mu}(c)+o(1).
 \end{equation}
 Hence, it follows from \eqref{Sob}, \eqref{P53} and \eqref{P54} that
 \begin{eqnarray}\label{P56}
  N\hat{M}_{\mu}(c)+o(1)
   &  =  & \|u_n\|_{2^*}^{2^*} \le \left(\f{\|\nabla u_n\|_2^2}{\mathcal{S}}\right)^{\f{N}{N-2}}
            =\left(\f{N\hat{M}_{\mu}(c)+o(1)}{\mathcal{S}}\right)^{\f{N}{N-2}}\nonumber\\
   &  =  & \left(\f{N\hat{M}_{\mu}(c)}{\mathcal{S}}\right)^{\f{N}{N-2}}+o(1).
 \end{eqnarray}
 Consequently, $\hat{M}_{\mu}(c)\ge \f{1}{N}\mathcal{S}^{N/2}$, which contradicts \eqref{mcv}. Thus $\delta>0$.
 Without loss of generality, we may assume the existence of $y_n\in \R^N$ such that
 $\int_{B_{1}(y_n)}|u_n|^2\mathrm{d}x> \f{\delta}{2}$. Let
 $\hat{u}_n(x)=u_n(x+y_n)$. Then we have
 \begin{equation}\label{P58}
   \|\hat{u}_n\|_2^2=c, \ \ \mathcal{P}_{\mu}(\hat{u}_n)\rightarrow 0, \ \ \Phi_{\mu}(\hat{u}_n)\rightarrow \hat{M}_{\mu}(c), \ \ \int_{B_1(0)}|\hat{u}_n|^2\mathrm{d}x> \f{\delta}{2}.
 \end{equation}
 Therefore, there exists $\hat{u}\in H^1(\R^N)\setminus \{0\}$ such that, passing to a subsequence,
 \begin{equation}\label{P60}
 \left\{
   \begin{array}{ll}
     \hat{u}_n\rightharpoonup \hat{u}, & \mbox{in} \ H^1(\R^N); \\
     \hat{u}_n\rightarrow \hat{u}, & \mbox{in} \ L_{\mathrm{loc}}^s(\R^N), \ \forall \ s\in [1, 2^*);\\
     \hat{u}_n\rightarrow \hat{u}, & \mbox{a.e. on} \ \R^N.
   \end{array}
 \right.
 \end{equation}
 By Lemma \ref{lem 2.2}, one has
 \begin{equation}\label{P65}
    \Phi_{\mu}'(\hat{u}_n)-\lambda_n\hat{u}_n\rightarrow 0,
 \end{equation}
 where
 \begin{equation}\label{P66}
   \lambda_n=\f{1}{\|\hat{u}_n\|_2^2}\langle\Phi_{\mu}'(\hat{u}_n),\hat{u}_n\rangle
     =\f{1}{c}\left[\|\nabla \hat{u}_n\|_2^2-\mu\|\hat{u}_n\|_q^q-\|\hat{u}_n\|_{2^*}^{2^*}\right].
 \end{equation}
 Since $\{ \hat{u}_n\}$ is bounded in $H^1(\mathbb{R}^N)$, it follows from \eqref{P66} that $\{|\lambda_n|\}$ is
 also bounded. Thus, we may thus assume, passing to a subsequence if necessary, that $\lambda_n\rightarrow \lambda_c$. By a standard argument, we can deduce
 \begin{equation}\label{P68}
   \Phi_{\mu}'(\hat{u})-\lambda_c\hat{u}=0.
 \end{equation}
 Hence, by Lemma \ref{lem 3.3}, one has
 \begin{equation}\label{P70}
   \mathcal{P}_{\mu}(\hat{u})=\|\nabla \hat{u}\|_2^2-\mu\gamma_q\|\hat{u}\|_q^q-\|\hat{u}\|_{2^*}^{2^*}=0.
 \end{equation}
 Combining \eqref{P68} with \eqref{P70}, it is easy to deduce that $\lambda_c<0$. Set $\|\hat{u}\|_2^2:=\hat{c}$. Then $0<\hat{c}\le c$, and \eqref{P70} shows that $\hat{u}\in \mathcal{M}_{\mu}(\hat{c})$. From \eqref{Phu}, \eqref{Pu}, \eqref{PhP}, \eqref{P65}, \eqref{P66}, Lemma \ref{lem 4.12} and the weak semicontinuity of norm, one has
 \begin{eqnarray*}
   \hat{M}_{\mu}(c)
   &  =  & \lim_{n\to\infty} \left[\Phi_{\mu}(\hat{u}_n)-\f{1}{2}\mathcal{P}_{\mu}(\hat{u}_n)\right]\nonumber\\
   &  =  & \lim_{n\to\infty}\left[\f{1}{N}\|\hat{u}_n\|_{2^*}^{2^*}
            +\f{\mu}{2q}(q\gamma_q-2)\|\hat{u}_n\|_{q}^{q}\right]\nonumber\\
   & \ge & \f{1}{N}\|\hat{u}\|_{2^*}^{2^*}
            +\f{\mu}{2q}(q\gamma_q-2)\|\hat{u}\|_{q}^{q}\nonumber\\
   &  =  & \Phi_{\mu}(\hat{u})-\f{1}{2}\mathcal{P}_{\mu}(\hat{u})
           = \Phi_{\mu}(\hat{u})\nonumber\\
   & \ge & \hat{m}_{\mu}(\hat{c})\ge \hat{m}_{\mu}(c)=\hat{M}_{\mu}(c),
 \end{eqnarray*}
 which implies
 \begin{equation}\label{P76}
    \|\hat{u}\|_2^2=\hat{c},\ \ \Phi_{\mu}(\hat{u})=\hat{m}_{\mu}(\hat{c})=\hat{m}_{\mu}(c),\ \ \mathcal{P}_{\mu}(\hat{u})=0.
 \end{equation}
 This shows $m(\hat{c})$ is achieved. In view of Lemma \ref{lem 4.13}, $\hat{c}=c$. Thus,
 \begin{equation}\label{P78}
    \lambda_c<0, \ \ \|\hat{u}\|_2^2=c, \ \ \Phi_{\mu}(\hat{u})=\hat{m}_{\mu}(c), \ \
     \mathcal{P}_{\mu}(\hat{u})=0,
 \end{equation}
 Both \eqref{P68} and \eqref{P78} imply the conclusions of Theorem {\rm\ref{thm 1.3}} hold.

 \end{proof}

\section*{Acknowledgments}
The authors would like to express their sincere gratitude to the anonymous referee for his/her careful reading
 and valuable suggestions and comments.
This work is partially supported by the National Natural Science Foundation of China (No: 11971485, No: 12001542),
Hunan Provincial Natural Science Foundation
(No: 2022JJ20048, No: 2021JJ40703).


\end{document}